\def\append@label@year@{%
    \safe@set\@tempcnta\bib@year
    \edef\bib@citeyear{\the\@tempcnta}%
    \ifnum\bib@citeyear>9
      \append@to@stem{%
          \ifx\bib@year\@empty
          \else
            \@xp\year@short \bib@citeyear \@nil
          \fi
      }%
    \fi
}
\let\oldtocsection=\tocsection
\renewcommand{\tocsection}[2]{\hspace{0em}\oldtocsection{#1}{#2}}
\def\upddots{\mathinner{\mkern 1mu\raise 1pt \hbox{.}\mkern 2mu
\mkern 2mu \raise 4pt\hbox{.}\mkern 1mu \raise 7pt\vbox {\kern 7
pt\hbox{.}}} }
\numberwithin{equation}{section}
\begin{document}
\setlength{\unitlength}{2.5cm}

%%%%%%%%%%% theorem styles
\newtheorem{thm}{Theorem}[section]
\newtheorem{lm}[thm]{Lemma}
\newtheorem{prop}[thm]{Proposition}
\newtheorem{cor}[thm]{Corollary}
\newtheorem{conj}[thm]{Conjecture}
\newtheorem{specu}[thm]{Speculation}

\theoremstyle{definition}
\newtheorem{dfn}[thm]{Definition}
\newtheorem{eg}[thm]{Example}
\newtheorem{rmk}[thm]{Remark}

\newcommand{\N}{\mathbbm{N}}
\newcommand{\R}{\mathbbm{R}}
\newcommand{\C}{\mathbbm{C}}
\newcommand{\Z}{\mathbbm{Z}}
\newcommand{\Q}{\mathbbm{Q}}

\newcommand{\Mp}{{\rm Mp}}
\newcommand{\Sp}{{\rm Sp}}
\newcommand{\GSp}{{\rm GSp}}
\newcommand{\GL}{{\rm GL}}
\newcommand{\PGL}{{\rm PGL}}
\newcommand{\SL}{{\rm SL}}
\newcommand{\SO}{{\rm SO}}
\newcommand{\Spin}{{\rm Spin}}
\newcommand{\GSpin}{{\rm GSpin}}
\newcommand{\Ind}{{\rm Ind}}
\newcommand{\Res}{{\rm Res}}
\newcommand{\Hom}{{\rm Hom}}
\newcommand{\End}{{\rm End}}
\newcommand{\msc}[1]{\mathscr{#1}}
\newcommand{\mfr}[1]{\mathfrak{#1}}
\newcommand{\mca}[1]{\mathcal{#1}}
\newcommand{\mbf}[1]{{\bf #1}}

\newcommand{\mbm}[1]{\mathbbm{#1}}

\newcommand{\into}{\hookrightarrow}
\newcommand{\onto}{\twoheadrightarrow}

\newcommand{\s}{\mathbf{s}}
\newcommand{\cc}{\mathbf{c}}
\newcommand{\bfa}{\mathbf{a}}
\newcommand{\id}{{\rm id}}
\newcommand{\g}{\mathbf{g}_{\psi^{-1}}}
\newcommand{\w}{\mathbbm{w}}
\newcommand{\Ftn}{{\sf Ftn}}
\newcommand{\p}{\mathbf{p}}
\newcommand{\bq}{\mathbf{q}}
\newcommand{\WD}{\text{WD}}
\newcommand{\W}{\text{W}}
\newcommand{\Wh}{{\rm Wh}}
\newcommand{\ggma}{\omega}
\newcommand{\sct}{\text{\rm sc}}
\newcommand{\Of}{\mca{O}^\digamma}
\newcommand{\gk}{c_{\sf gk}}
\newcommand{\Irr}{ {\rm Irr} }
\newcommand{\Irrg}{ {\rm Irr}_{\rm gen} }
\newcommand{\diag}{{\rm diag}}
\newcommand{\uchi}{ \underline{\chi} }
\newcommand{\Tr}{ {\rm Tr} }
\newcommand{\der}\de
\newcommand{\Stab}{{\rm Stab}}
\newcommand{\Ker}{{\rm Ker}}
\newcommand{\bfp}{\mathbf{p}}
\newcommand{\bfq}{\mathbf{q}}
\newcommand{\KP}{{\rm KP}}
\newcommand{\Sav}{{\rm Sav}}
\newcommand{\de}{{\rm der}}
\newcommand{\tnu}{{\tilde{\nu}}}
\newcommand{\lest}{\leqslant}
\newcommand{\gest}{\geqslant}
\newcommand{\tu}{\widetilde}
\newcommand{\tchi}{\tilde{\chi}}
\newcommand{\tomega}{\tilde{\omega}}
\newcommand{\Rep}{{\rm Rep}}
\newcommand{\A}{{\mbf A}}
\newcommand{\BDI}{{\rm Inv}_{\rm BD}}
\newcommand{\nn}{ n_{\pmb{\varkappa}} }

\newcommand{\cu}[1]{\textsc{\underline{#1}}}
\newcommand{\set}[1]{\left\{#1\right\}}
\newcommand{\ul}[1]{\underline{#1}}
\newcommand{\ol}[1]{\overline{#1}}
\newcommand{\wt}[1]{\overline{#1}}
\newcommand{\wtsf}[1]{\wt{\sf #1}}
\newcommand{\anga}[1]{{\left\langle #1 \right\rangle}}
\newcommand{\angb}[2]{{\left\langle #1, #2 \right\rangle}}
\newcommand{\wm}[1]{\wt{\mbf{#1}}}
\newcommand{\elt}[1]{\pmb{\big[} #1\pmb{\big]} }
\newcommand{\ceil}[1]{\left\lceil #1 \right\rceil}
\newcommand{\floor}[1]{\left\lfloor #1 \right\rfloor}
\newcommand{\val}[1]{\left| #1 \right|}
\newcommand{\bepsilon}{\overline{\epsilon}}
\newcommand{\HH}{\mca{H}}
\newcommand{\WF}{{\rm WF}}

\newcommand{\exc}{ {\rm exc} }

\newcommand{\motimes}{\text{\raisebox{0.25ex}{\scalebox{0.8}{$\bigotimes$}}}}

%% exterior symbol
\makeatletter
\newcommand{\extp}{\@ifnextchar^\@extp{\@extp^{\,}}}
\def\@extp^#1{\mathop{\bigwedge\nolimits^{\!#1}}}
\makeatother

\title[Geometric wavefront sets of genuine Iwahori-spherical representations]{Geometric wavefront sets of genuine Iwahori-spherical representations}

\author{Fan Gao}
%\address{School of Mathematical Sciences, Zhejiang University, 866 Yuhangtang Road, Hangzhou, China 310058}
%\email{}

\author{Runze Wang}
\address{School of Mathematical Sciences, Zhejiang University, 866 Yuhangtang Road, Hangzhou, China 310058}
% \email{wang\underline{\ }runze@zju.edu.cn }
\email{{gaofan@zju.edu.cn ({\rm F. G.}),  wang\underline{\ }runze@zju.edu.cn ({\rm R. W.})}}

\date{}
\subjclass[2010]{Primary 11F70; Secondary 22E50, 20G42}
\keywords{covering groups, Iwahori-spherical, wavefront sets,  covering Barbasch--Vogan duality}
\maketitle

\begin{abstract} 
For Iwahori-spherical genuine representations of central covers with positive real Satake parameters, we prove the upper bound inequality for their geometric wavefront sets, formulated for general genuine representations in an earlier work by Gao--Liu--Lo--Shahidi. Meanwhile, we show the equality is attained for covers of type $A$ groups and for some representations of covers of the exceptional groups. We also verify the equality for certain Iwahori-spherical representations occurring in regular unramified principal series; this uses and generalizes the earlier work of Karasiewicz--Okada--Wang on theta representations. Lastly, we determine the leading coefficients in the Harish-Chandra character expansion of a theta representation when its geometric wavefront set is of a special type.	
\end{abstract}

\tableofcontents

\section{Introduction}\label{Intro}
In this paper, we consider a $p$-adic degree-$n$ central covering group
$$\begin{tikzcd}
\mu_n \ar[r, hook] & \overline{G}^{(n)} \ar[r, two heads] & G
\end{tikzcd}$$
that arises from the Brylinski--Deligne framework. For every irreducible genuine representation $\pi \in \Irr_\iota(\wt{G}^{(n)})$, it is important to understand its wavefront set $\WF(\pi)$, or the coarser geometric wavefront set $\WF^{\rm geo}(\pi)$, which gives the Gelfand--Kirillov dimension of $\pi$. For a linear algebraic group $G:=\wt{G}^{(1)}$, there has been an abundance of literature on this problem. For a general cover $\wt{G}:=\wt{G}^{(n)}$, there exist much fewer results regarding the wavefront sets of its genuine representations, and one notable family of representations concerns the theta representations.

Indeed, for a linear algebraic group $G$ and $\pi \in \Irr(G)$, an upper bound conjecture for $\WF^{\rm geo}(\pi)$ is formulated independently  by  Ciubotaru--Kim \cite{CK24} and Hazeltine--Liu--Lo--Shahidi \cite{HLLS}. It uses the Aubert--Zelevinsky involution ${\rm AZ}$ on $\Irr(G)$,  the local Langlands correspondence and thus the $L$-parameter of ${\rm AZ}(\pi)$, and the Barbasch--Vogan duality map $d_{BV, G}$ on nilpotent orbits. Many results are proved either prior to these formulations, or are verified along this conjecture. Notably, for a large class of unipotent representations representations \cite{CMBO24, CMBO25, CMBO21}, for certain representations of classical groups \cite{JL16, JL25} and for tempered representations of  $G_2$ \cite{CK24}, this upper bound conjecture has been verified. We refer the readers to the work and references in \cite{HLLS, CK24} for extensive reviews on the relevant literature for linear algebraic groups.

This upper bound conjecture is generalized from $G$ to $\wt{G}$ in \cite{GLLS} and it takes a similar form as in the linear case: one postulates that for every $\pi\in \Irr_\iota(\wt{G})$, the inequality
\begin{equation} \label{F:M1}
\WF^{\rm geo}({\rm AZ}(\pi)) \lest d_{BV, G}^{(n)}(\mca{O}(\phi_{\pi}))
\end{equation}
holds; also, the equality holds at the level of L-packets for tempered parameter $\phi_\pi$. However, it is crucial to replace the linear $d_{BV,G}$ by the covering Barbasch--Vogan duality $d_{BV, G}^{(n)}$ in \eqref{E:dBV}, defined and studied in \cite{GLLS}. This conjecture certainly depends on the hypothetical LLC, which is far from being established in the covering setting. Some evidence of the above upper bound conjecture is also given, loco citato. In particular, it encompasses the case of theta representations investigated in \cite{KOW}, see \cite[\S 5.1]{GLLS}. As mentioned, the wavefront set problem of theta representations, both local and global, has been studied extensively as in \cites{BFrG2, BFrG, FG15, Kap17-1, FG18, FG20, KOW}.

\subsection{Main results}
The goal of this paper is of three fold, all related to \eqref{F:M1} above. We assume $p$ is sufficiently large, as explained in the beginning of \S \ref{Prelim}.

First, we consider genuine Iwahori-spherical $\pi \in \Irr_\iota(\wt{G})^I$. By the work of Savin \cite{Sav04}, naturally associated to $\pi$ is a Kazhdan--Lusztig--Reeder parameter $(s, u, \tau)$, or equivalently an enhanced L-parameter. As a consequence of Theorem \ref{T:main}, which is of independent interest, we prove the following result (cf. Corollary \ref{C:key}):

\begin{thm} \label{T:M01}
Assume $p$ is sufficiently large.
Let $\pi:=\pi(s, u, \tau) \in \Irr_\iota(\wt{G})^I$ be an Iwahori-spherical genuine representation with positive real Satake parameter $s$. Then the inequality
\begin{equation} \label{E:main1}
\WF^{\rm geo}({\rm AZ}(\pi)) \lest d_{BV, G}^{(n)}(\mca{O}(\phi_{\pi}))
\end{equation}
holds. Furthermore, if $\wt{G}$ is a cover of $G$ of type $A$, then the equality is attained. For covers of exceptional groups, the equality is also attained when $\tau=\mbm{1}$ and the orbit of $u$, when viewed as in $\mca{N}(\mbf{G}(\overline{F}))$, lies in ${\rm Im}(d_{BV,G}^{(n)})$. 
\end{thm}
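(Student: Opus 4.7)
The plan is to derive this statement as a corollary of Theorem \ref{T:main} combined with Savin's parametrization of $\Irr_\iota(\wt{G})^I$ by Kazhdan--Lusztig--Reeder triples $(s,u,\tau)$. My first step is to identify the right-hand side of \eqref{E:main1} explicitly in terms of the KLR datum: when the Satake parameter $s$ is positive real, the Jacobson--Morozov procedure attaches to the pair $(s,u)$ an enhanced L-parameter $\phi_\pi$ whose nilpotent orbit $\mca{O}(\phi_\pi)$ is precisely the orbit $\mca{O}_u$ of $u$ in the dual group of $\wt{G}$. Thus the inequality \eqref{E:main1} reduces to a direct comparison between $\WF^{\rm geo}({\rm AZ}(\pi))$ and $d_{BV,G}^{(n)}(\mca{O}_u)$.

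For the upper bound, I would invoke Theorem \ref{T:main} after tracking how the Aubert--Zelevinsky involution acts on the KLR triple. Since ${\rm AZ}$ preserves the Iwahori-spherical condition and, under positive real $s$, has a well-understood effect on the KLR datum coming from affine Hecke algebra duality, the wavefront set of ${\rm AZ}(\pi)$ is controlled by Theorem \ref{T:main} in a way that matches the covering Barbasch--Vogan duality on the orbit side. This is the crucial and most delicate step, and it is what makes the corollary essentially a translation of the structural main theorem into the language of enhanced L-parameters.

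For the type $A$ equality, I would exploit that every nilpotent orbit is special, so Lusztig--Spaltenstein duality is a bijective involution on orbits, and the covering variant $d_{BV,G}^{(n)}$ inherits this bijective behavior on its domain after accounting for the rescaling dictated by $n$. Combined with the exact dimension control available from Iwahori--Hecke algebra methods in type $A$, the inequality is forced to saturate. For exceptional groups with $\tau = \mbm{1}$ and $\mca{O}_u \in {\rm Im}(d_{BV,G}^{(n)})$, the hypothesis should allow one to exhibit $\pi$ as a quotient of a specific standard module whose Whittaker-type model along $d_{BV,G}^{(n)}(\mca{O}_u)$ is non-vanishing, producing the matching lower bound that complements the upper bound from Theorem \ref{T:main}.

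The main obstacle, aside from Theorem \ref{T:main} itself, is the bookkeeping for the Aubert--Zelevinsky involution on KLR parameters in the covering setting: Reeder's analysis in the linear case must be upgraded to show that positivity of $s$ is preserved under ${\rm AZ}$ and that the component-group representation $\tau$ transforms predictably. A secondary obstacle is the exceptional equality case, where ${\rm Im}(d_{BV,G}^{(n)})$ is strictly smaller than the full nilpotent cone, so the verification must be carried out by inspection against the Barbasch--Vogan tables for $G_2, F_4, E_6, E_7, E_8$, with adjustments reflecting the covering degree $n$.
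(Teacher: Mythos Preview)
Your overall architecture is right: the statement is a corollary of Theorem~\ref{T:main}, and $\mca{O}(\phi_\pi)=\mca{O}_u^\vee$ so the target is $d_{BV,G}^{(n)}(\mca{O}_u^\vee)$. But the mechanisms you describe for each step are either too vague or incorrect, and the paper's route is different in important ways.

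For the upper bound, the paper does \emph{not} track how ${\rm AZ}$ acts on the KLR triple. Instead, Proposition~\ref{main_prop} gives an explicit formula for $\WF^{\rm geo}({\rm AZ}(\pi(s,u,\tau)))$ when $s$ is positive real, as a maximum over certain $j$-inductions indexed by $\sigma\in\Irr(W(\Phi^{Q,n}_{x,\chi_i}))$ occurring in the restriction of a specific set $\Irr(W)_{s,u,\tau}$ of Weyl group representations. Lemma~\ref{lm:AA} (an application of Achar--Aubert) then shows that every such $\sigma$ automatically satisfies the constraint defining $\mca{D}(\mca{O}_u^\vee)$ in \eqref{E:Dxchi}--\eqref{E:D}. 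This gives the chain $\WF^{\rm geo}({\rm AZ}(\pi))\lest \mca{D}(\mca{O}_u^\vee)\lest d_{BV,G}^{(n)}(\mca{O}_u^\vee)$, the second inequality being Theorem~\ref{T:main}. The intermediary $\mca{D}(\mca{O}^\vee)$ and its variant $\mca{D}^\flat(\mca{O}^\vee)$ are the missing bridge in your sketch.

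Your type $A$ argument has a genuine error: $d_{BV,G}^{(n)}=d_{com}^{(\nn)}$ is \emph{not} bijective for covers (many partitions map to the same output), so you cannot argue via an involution. The paper instead shows $\mca{D}(\mca{O}_u^\vee)=\mca{D}^\flat(\mca{O}_u^\vee)$ in type $A$, and then observes that the representation $E_{u,\mbm{1}}$ always lies in $\Irr(W)_{s,u,\tau}$ (since $\tau=\mbm{1}$ is forced in type $A$), so every $\sigma$ achieving $\mca{D}^\flat$ already appears in the formula of Proposition~\ref{main_prop}; this gives the reverse inequality $\mca{D}^\flat(\mca{O}_u^\vee)\lest \WF^{\rm geo}({\rm AZ}(\pi))$.

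For exceptional groups, the paper does not produce Whittaker-type models. The argument is purely combinatorial: one checks from the tables in \cite[Appendix]{GLLS} that when $\mca{O}_u^\vee$ (viewed in $\mca{N}(\mbf{G}(\ol F))$) lies in ${\rm Im}(d_{BV,G}^{(n)})$, the map $d_{BV,G}^{(n)}$ is \emph{strictly} order-reversing at $\mca{O}_u^\vee$, i.e.\ $d_{BV,G}^{(n)}(\mca{O}_{u'}^\vee)<d_{BV,G}^{(n)}(\mca{O}_u^\vee)$ for every $\mca{O}_{u'}^\vee>\mca{O}_u^\vee$. Combined with Theorem~\ref{T:main} this forces $\mca{D}^\flat(\mca{O}_u^\vee)=\mca{D}(\mca{O}_u^\vee)$, and the $\tau=\mbm{1}$ hypothesis again ensures $E_{u,\mbm{1}}\in\Irr(W)_{s,u,\mbm{1}}$, giving the lower bound via Proposition~\ref{main_prop}.
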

For linear algebraic groups, one actually achieves equality in \eqref{E:main1} by the work of Ciubotaru--Mason-Brown--Okada \cite{CMBO21}. However, for covering groups, the above inequality may be strict, see Example \ref{E:c-eg}. We expect that the upper bound in \eqref{E:main1} is ``one-step" optimal, see Conjecture \ref{C:1}.

In view of this, it is thus important to understand when the equality in \eqref{F:M1} actually holds, even for genuine Iwahori-spherical representations with positive real Satake parameters. As mentioned, the equality holds for theta representations, and clearly also for covering Steinberg representations. These two are special examples of the constituents $\pi_S$ of a regular unramified principal series. Here we have $\pi_S \in {\rm JH}(I(\nu))$ with $\nu \in X\otimes \R$ being regular and $S \subseteq \Phi(\nu) \subseteq \Delta$. 
If we set $M_S \subseteq G$ to be the Levi subgroup associated with $S$, with corresponding standard parabolic subgroup denoted by $P_S$, then $\pi_S \in {\rm JH}({\rm Ind}_{\wt{P}_S}^{\wt{G}} \Theta(\wt{M}_S))$. 

It was speculated in \cite{GaTs} that the equality in \eqref{E:main1} is achieved for such $\pi_S$. Regarding this, we show the following result, which could be viewed as a generalization of that for theta representations.

\begin{thm}[Theorem \ref{T:main2}] \label{T:02}
Assume $p$ is sufficiently large.
Assume $S$ is not $\wt{G}^{(n)}$-autotomous (see Definition \ref{D:attm}). If $G$ is of exceptional type, we further assume that $\nn \lest e(S)$. Then the equality 
$$\WF^{\rm geo}(\pi_S) = d_{BV, G}^{(n)}(\mca{O}(\phi_{{\rm AZ}(\pi_S)}))$$
holds.
\end{thm}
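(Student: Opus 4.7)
The plan is to sandwich $\WF^{\rm geo}(\pi_S)$ between matching upper and lower bounds, both equal to $d_{BV, G}^{(n)}(\mca{O}(\phi_{{\rm AZ}(\pi_S)}))$. The upper bound is essentially given by Theorem \ref{T:M01}: since $\pi_S$ is Iwahori-spherical with positive real Satake parameter (being a constituent of the regular unramified principal series $I(\nu)$), so is ${\rm AZ}(\pi_S)$; applying Theorem \ref{T:M01} to the latter and invoking the involutivity ${\rm AZ}^2 = {\rm id}$ yields
$$\WF^{\rm geo}(\pi_S) \lest d_{BV, G}^{(n)}(\mca{O}(\phi_{{\rm AZ}(\pi_S)})).$$

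The lower bound is the substantive half and uses the structural description $\pi_S \in {\rm JH}({\rm Ind}_{\wt{P}_S}^{\wt{G}} \Theta(\wt{M}_S))$. By the work of Karasiewicz--Okada--Wang \cite{KOW}, the theta representation $\Theta(\wt{M}_S)$ carries explicit non-degenerate generalized Whittaker functionals attached to a specific nilpotent orbit $\mca{O}_\Theta \in \mca{N}(\mfr{m}_S)$, placing $\mca{O}_\Theta$ in $\WF^{\rm geo}(\Theta(\wt{M}_S))$. I would then induce these functionals via the standard Moeglin--Waldspurger-type machinery, adapted to the covering setting, to produce non-degenerate generalized Whittaker functionals on ${\rm Ind}_{\wt{P}_S}^{\wt{G}} \Theta(\wt{M}_S)$ with respect to the induced orbit ${\rm Ind}_{M_S}^G(\mca{O}_\Theta) \subset \mca{N}(\mfr{g})$. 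The non-autotomy hypothesis on $S$ (Definition \ref{D:attm}) is precisely what guarantees that this induced functional does not vanish on the specific constituent $\pi_S$, so that
$${\rm Ind}_{M_S}^G(\mca{O}_\Theta) \in \WF^{\rm geo}(\pi_S).$$

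The technical core of the argument is then the orbit identification
$${\rm Ind}_{M_S}^G(\mca{O}_\Theta) = d_{BV, G}^{(n)}(\mca{O}(\phi_{{\rm AZ}(\pi_S)})),$$
which I would establish by combining three ingredients: (i) an explicit description of the L-parameter $\phi_{{\rm AZ}(\pi_S)}$ in terms of the Kazhdan--Lusztig--Reeder data $(s, u, \tau)$ of $\pi_S$, so that the right-hand side is computable; (ii) the identification $\mca{O}_\Theta = d_{BV, M_S}^{(n)}(\mca{O}(\phi_{{\rm AZ}(\Theta(\wt{M}_S))}))$ coming from the equality case of Theorem \ref{T:M01} applied to the Levi $\wt{M}_S$ (where $\tau = \mbm{1}$ and the relevant orbit lies in the image of the covering duality); and (iii) a compatibility statement asserting that induction of nilpotent orbits commutes with the covering Barbasch--Vogan duality \eqref{E:dBV} as one passes from $\wt{M}_S$ to $\wt{G}^{(n)}$. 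The main obstacle is (iii): the covering duality $d_{BV,G}^{(n)}$ differs from its linear analogue in ways sensitive to the degree $n$ and the relevant root-length data, and for covers of exceptional type the hypothesis $\nn \lest e(S)$ is exactly what is needed to ensure this discrepancy does not obstruct the desired commutation. For classical-type covers the step (iii) should reduce to a direct partition-theoretic check using the explicit description of induced orbits. Once (i)--(iii) are in hand, pinching $\WF^{\rm geo}(\pi_S)$ between the two bounds completes the proof.
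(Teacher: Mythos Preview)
Your upper bound via Theorem \ref{T:M01} is valid, though the paper obtains it more directly: from $\WF^{\rm geo}({\rm Ind}_{\wt{P}_S}^{\wt{G}} \Theta(\wt{M}_S)) = {\rm Ind}_{\mbf{M}_S}^{\mbf{G}}(\WF^{\rm geo}(\Theta(\wt{M}_S))) = d_{BV, G}^{(n)}(\mca{O}_{\rm reg}^{\vee,S})$ (M\oe glin--Waldspurger plus \cite{KOW}), every constituent $\pi_{S'}$ with $S' \supseteq S$ satisfies $\WF^{\rm geo}(\pi_{S'}) \lest d_{BV, G}^{(n)}(\mca{O}_{\rm reg}^{\vee,S'})$.

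Your lower bound argument, however, misidentifies both the mechanism and the role of the hypotheses. You propose to show directly that an induced Whittaker functional does not vanish on $\pi_S$, and you claim non-autotomy is ``precisely'' what ensures this. That is not what the paper does, and it is unclear how one would carry it out. The paper instead argues by elimination: the semisimplification of ${\rm Ind}_{\wt{P}_S}^{\wt{G}} \Theta(\wt{M}_S)$ is $\bigoplus_{S' \supseteq S} \pi_{S'}$, and since the wavefront set of the full induced representation equals $d_{BV, G}^{(n)}(\mca{O}_{\rm reg}^{\vee,S})$, some constituent must achieve it. The entire content of the hypotheses (non-autotomy, and $\nn \lest e(S)$ in the exceptional case) goes into the purely combinatorial Proposition \ref{P:piSkey}: for every $S' \supsetneq S$, one has the \emph{strict} inequality $d_{BV, G}^{(n)}(\mca{O}_{\rm reg}^{\vee,S'}) < d_{BV, G}^{(n)}(\mca{O}_{\rm reg}^{\vee,S})$. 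Combined with the upper bound, this forces $\WF^{\rm geo}(\pi_{S'}) < d_{BV, G}^{(n)}(\mca{O}_{\rm reg}^{\vee,S})$ for all $S' \supsetneq S$, so only $\pi_S$ can carry the maximal orbit.

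Your step (iii), the compatibility of orbit induction with $d_{BV,G}^{(n)}$, is indeed used (it is \cite[Theorem 1.1]{GLLS} and holds unconditionally), but it is not where $\nn \lest e(S)$ enters; that hypothesis is needed only to verify the strict inequality of Proposition \ref{P:piSkey} in the exceptional cases. For classical types, Proposition \ref{P:piSkey} is proved by an elementary partition argument (Lemma \ref{L:key01}), and non-autotomy translates directly into the inequality $\nn < p_{i_0} + p_{i_0+1}$ (or its analogue) that makes $d_{com}^{(\nn)}$ strictly drop.
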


In the last part of the paper, we consider the leading coefficient $c_{\Theta}(\mca{O})_\psi$ for a theta representation, where $\mca{O} \in \WF(\Theta)$. Here $c_{\Theta}(\mca{O})_\psi$ is identified with the dimension of certain degenerate Whittaker models of $\Theta$, following \cite{MW87, Pate}. A speculative formula regarding $c_\Theta(\mca{O})_\psi$ is given in \cite[Conjecture 2.5]{GaTs}, and is verified for $\wt{\GL}_r^{(n)}$ in \cite[Theorem 5.1]{GLT25}. Here, we consider general $\wt{G}^{(n)}$ but only when $\WF^{\rm geo}(\Theta)$ is the saturation of the regular orbit of a Levi subgroup. 
Some analysis shows that the formula for $c_\Theta(\mca{O})_\psi$ has deviations from the earlier speculations in \cite{GaTs} general. The result for this part is as follows.

\begin{thm}[Theorem \ref{T:cO}] \label{T:M3}
Assume $p$ is sufficiently large. Let $\wt{G}^{(n)}$ be a covering group of $\diamondsuit$-type (see Definition \ref{D:ds}). Assume that the geometric wavefront orbit $\mca{O}_{\rm Spr}( j_{W_\nu}^W \varepsilon )$ of $\Theta(\wt{G}^{(n)})$ is  the regular orbit of a Levi subgroup; we further assume that $\mca{O}_X^{k, n} = \mca{O}^{k, n}$ if $X \in \set{B, C}$ and that $\mca{O}_X^{2r, n}=(n^{2a+1},1)$ if $X=D$. Then one has
$$c_\Theta(\mca{O})_\psi = \angb{ j_{W_\nu}^W \varepsilon_{W_\nu} }{ \sigma_{\msc{X}_n} \otimes \varepsilon_W }_W$$
for every $\mca{O} \in {\rm WF}(\Theta)$.
\end{thm}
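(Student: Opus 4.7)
The plan is to identify $c_\Theta(\mca{O})_\psi$ with a dimension of degenerate Whittaker functionals, and then compute the latter through a Jacquet module analysis of $\Theta$ viewed as a distinguished subquotient of a regular unramified genuine principal series.

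First, by the theorem of Moeglin--Waldspurger (valid for $p$ sufficiently large), together with its extension to covers by Patel, one has $c_\Theta(\mca{O})_\psi = \dim \Wh_{\psi_\mca{O}}(\Theta)$, where $\psi_\mca{O}$ is the generic character on the unipotent radical $N$ of the parabolic $P = MN$ whose Levi $M$ has $\mca{O}$ as the Lusztig--Spaltenstein saturation of its regular orbit. Frobenius reciprocity identifies this space with the space of non-degenerate Whittaker functionals on the normalized Jacquet module $r_P(\Theta)$, a genuine representation of $\wt{M}^{(n)}$.

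Second, realize $\Theta = \Theta(\wt{G}^{(n)})$ as a constituent of a regular exceptional unramified principal series $I(\chi_\Theta)$, following Savin and Karasiewicz--Okada--Wang. A covering version of the Bernstein--Zelevinsky geometric lemma expresses $r_P(I(\chi_\Theta))$ as a successive extension indexed by double cosets $W_M \backslash W / W_\nu$, whose subquotients are twisted principal series of $\wt{M}^{(n)}$. Using the explicit Kazhdan--Lusztig--Reeder description of the constituents of $I(\chi_\Theta)$ under the $\diamondsuit$-type hypothesis, one extracts the component of $r_P$ that belongs to $\Theta$ itself. The non-degenerate Whittaker dimension of each twisted principal series of $\wt{M}^{(n)}$ that appears is then computable via the covering analogue of the Casselman--Shalika scattering formula for long intertwining operators, in the spirit of the calculations in \cite{KOW} for the special case $M = T$.

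Third, one reorganizes the resulting alternating sum over $W_M \backslash W / W_\nu$ into a $W$-character inner product. The character $\sigma_{\msc{X}_n}$ encodes, via the covering root datum $\msc{X}_n$, the $W$-action on the space of Whittaker functionals of $I(\chi_\Theta)$; the twist by $\varepsilon_W$ absorbs the signs produced by the Bernstein--Zelevinsky filtration; and the standard identity replacing parabolic induction of the sign character by its top-degree $j$-induct, namely $j_{W_\nu}^W \varepsilon_{W_\nu}$, applies precisely when $\mca{O}$ is induced from a regular Levi orbit. Assembling these ingredients yields $c_\Theta(\mca{O})_\psi = \angb{j_{W_\nu}^W \varepsilon_{W_\nu}}{\sigma_{\msc{X}_n} \otimes \varepsilon_W}_W$, which is manifestly independent of the chosen rational orbit $\mca{O}$ sitting above the fixed geometric wavefront orbit in $\WF(\Theta)$.

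The main obstacle will be in the second step: isolating the contribution of $\Theta$ itself among the other Jordan--H\"older constituents of $I(\chi_\Theta)$ inside $r_P(I(\chi_\Theta))$, and verifying that the intermediate subquotients carry no non-degenerate Whittaker functionals. For the classical types $B$, $C$, $D$, this is exactly where the orbit hypotheses $\mca{O}_X^{k,n} = \mca{O}^{k,n}$ (respectively $\mca{O}_D^{2r,n} = (n^{2a+1},1)$) enter: they force the orbit attached to $\Theta$ through the $\diamondsuit$-type structure to coincide with the one induced from the regular orbit of $M$, so that no spurious intermediate contributions survive in the alternating sum. Exceptional types enter only through the $\diamondsuit$-type hypothesis and are thereby handled uniformly with the classical cases.
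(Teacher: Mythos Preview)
Your first step agrees with the paper. But from there the paper takes a much shorter route than you do, and your proposed route has a genuine gap.

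The paper does not filter $r_P(I(\chi_\Theta))$ via the geometric lemma. Instead it uses the periodicity of theta representations: the Jacquet module of $\Theta(\wt{G}^{(n)})$ to $M_\mca{O}$ is simply $\Theta(\wt{M}_\mca{O}^{(n)})$, up to an unramified twist. So there is no need to isolate $\Theta$ among the other constituents of $I(\chi_\Theta)$ at all. Combined with the known Whittaker dimension formula for a theta representation (from \cite{Ga6, GGK1}), this gives immediately
\[
c_\Theta(\mca{O})_\psi \;=\; \angb{\varepsilon_{W(M_\mca{O})}}{\sigma_{\msc{X}_n}}_{W(M_\mca{O})}.
\]
The whole theorem then reduces to the combinatorial identity
\[
\angb{\varepsilon_{W(M_\mca{O})}}{\sigma_{\msc{X}_n}}_{W(M_\mca{O})} \;=\; \angb{j_{W_\nu}^W \varepsilon_{W_\nu}}{\sigma_{\msc{X}_n}\otimes\varepsilon_W}_W,
\]
which the paper verifies type by type, computing the left side via Sommers \cite{Som97} and the right side via Gyoja--Nishiyama--Shimura \cite{GNS99}.

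This exposes the gap in your third step. You describe ``reorganizing the alternating sum into a $W$-character inner product'' and claim $j_{W_\nu}^W\varepsilon_{W_\nu}$ will emerge, but you give no mechanism for this, and in fact no such uniform identity exists. The paper explicitly computes both sides in type $C_r$ with $r=2m$, $n=2m+1$ (where the geometric wavefront orbit is a Levi regular orbit but $\mca{O}_C^{2r,n}\ne\mca{O}^{2r,n}$) and finds $c_\Theta(\mca{O})_\psi=2m+1$ while $\angb{j_{W_\nu}^W\varepsilon}{\sigma_{\msc{X}_n}\otimes\varepsilon_W}_W=m+1$; similarly in type $D_r$ the two sides differ unless $b=m$. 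So the extra hypotheses in the statement are \emph{not} there to kill spurious Whittaker contributions from other constituents of $I(\chi_\Theta)$, as you suggest; the left-hand formula for $c_\Theta(\mca{O})_\psi$ holds unconditionally once $\mca{O}$ is a Levi regular orbit. The hypotheses are precisely the numerical conditions under which the two $W$-character inner products happen to agree, and the paper checks this by direct computation, including separate verifications for $G_2$, $E_6$, $E_7$, $E_8$. Your claim that exceptional types are ``handled uniformly'' is therefore not correct.
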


\subsection{Acknowledgement} 
We would like to thank Sandeep Verma for an enlightening discussion on a relevant topic. The work of both authors is partially supported by the National Key R\&D Program of China (No. 2022YFA1005300) and also by NSFC-12171422.

\section{Preliminaries}\label{Prelim}
Let $F$ be a finite extension of $\Q_p$ with discrete valuation $| \cdot |_F$.  Let $\mathfrak{o}_F \subset F$ be the ring of algebraic integers and let $\mathfrak{p}_F \subset \mfr{o}_F$ be the maximal ideal. Let $\kappa_F=\mathfrak{o}_F/\mathfrak{p}_F$ be the residue field with cardinality $q$. Fix a uniformizer $\varpi \in \mathfrak{p}_F$ with $|\varpi|_F=q^{-1}$. We fix an algebraic closure $\wt{F}$ of $F$. 

We assume that $p$ is sufficiently large. This is to ensure the validity of the hypotheses 2.10, 2.14 and 2.17 in \cite[Theorem 2.22]{KOW}, which is used in this paper. A lower bound of $p$ can be chosen to be a constant arising from the root datum of $\mbf{G}$ and the ramification index of $F$ over $\Q$. Let $n\in \N_{\gest 1}$. We assume $F^\times$ contains the full group $\mu_n$ of $n$-th roots of unity, and that $p\nmid n$. Thus, $\mu_n \subseteq \kappa_F^\times$.

\subsection{Covering groups} \label{SS:cov}
Let $ \mbf{G} $ be a connected linear reductive group defined over $\mfr{o}_F$. By abuse of notation, we also use $\mbf{G}$ to denote its fiber over $F$, and assume it is split over $F$. Let $ \mathbf{T} $ be a maximal split torus of $ \mathbf{G} $. Let $$ (X,\Phi,\Delta; \ Y, \Phi^\vee, \Delta^\vee) $$ be the root datum associated with $(\mathbf{G},\mathbf{T})$, where $ X $ is the character lattice, $ Y $ the cocharacter lattice of $\mbf{T}$, and $ \Delta \subset \Phi$ is a choice of simple roots from the set $\Phi$ of roots.
Let $N(\mathbf{T}) \subseteq \mathbf{G}$ be the normalizer of $\mathbf{T}$ in $\mathbf{G}$, and let $ W=N(\mathbf{T})/\mathbf{T} $ be the Weyl group of $ (\mathbf{G},\mathbf{T}) $. % We fix a Chevalley--Steinberg system $ \{e_\alpha:\mathbf{G}_a \rightarrow \mathbf{U}_{\alpha}\}_{\alpha \in \Phi} $ of pinnings for $ (\mathbf{G},\mathbf{T}) $, where $ \mathbf{U}_\alpha \subset \mathbf{G}$ is the root subgroup associated with $ \alpha $.

Let $ G=\mathbf{G}(F)$ and we use this font for the $F$-points of other algebraic groups as well. We consider a degree $n$ Brylinski--Deligne central covering (see \cite{BD01, Wei18a, GG18})
$$\begin{tikzcd}
\mu_n \ar[r, hook] & \overline{G} \ar[r, two heads] & G
\end{tikzcd}$$
of $ G $ by $ \mu_n $ which is associated with the pair $ (D, \pmb{\eta}) $, where
\begin{itemize}
    \item $ D:Y \times Y \rightarrow \Z $ is a bilinear form such that $ Q(y):=D(y,y) $ is a Weyl-invariant quadratic form
    \item $ \pmb{\eta}: Y^{\text{sc}} \rightarrow F^{\times} $  is a homomorphism of the coroot lattice $ Y^{\text{sc}} \subset Y $ into $ F^{\times} $.
\end{itemize}
In this paper, we assume $\pmb{\eta}$ is trivial. For every root $\alpha$, we denote $n_\alpha:=n/\gcd(n,Q(\alpha^\vee))$.
We also write
$$\nn:=\nn(\wt{G}):=n_\alpha$$
for any short coroot $\alpha^\vee$, where for simply-laced group every coroot is considered as short. We call a cover $\wt{G}$ ``primitive" if $n=\nn$. Many results for the cover $\wt{G}$ depend essentially only on $\nn$ instead of $n$, and this is the raison d'\^etre of $\nn$.

We fix $\iota: \mu_n  \into \C^\times$ and consider exclusively $\iota$-genuine representations of $\ol{G}$, i.e., when $\mu_n$ acts via $\iota$. Denote by $\Irr_\iota(\ol{G})$ the set of isomorphism classes of irreducible $\iota$-genuine representations of $\ol{G}$.

\subsection{Dual groups} Let $B_Q: Y \times Y \to \Z$ be the bilinear form associated with $Q$, i.e., $B_Q(y,z):=Q(y+z) - Q(y) - Q(z)$.
Define $$Y_{Q,n}:=\{y \in Y: B_Q(y,z) \in n\Z  \text{ for all } z \in Y\} $$ 
and $ X_{Q,n}:=\text{Hom}_\Z(Y_{Q,n},\Z) $. For every $ \alpha \in \Phi$ we set $$ \alpha_{Q,n}^{\vee}=n_{\alpha}\alpha^{\vee} \text{ and } \alpha_{Q,n}=n_{\alpha}^{-1}\alpha,$$
where $ n_{\alpha}=n/\text{gcd}(n,Q(\alpha^{\vee}))$ as mentioned. Denote $$ \Phi_{Q,n}^{\vee}=\{\alpha_{Q,n}^{\vee}: \alpha \in \Phi^{\vee}\} \text{ and } \Phi_{Q,n}=\{\alpha_{Q,n}: \alpha \in \Phi\}; $$ similarly for $ \Delta_{Q,n}^{\vee} $ and $ \Delta_{Q,n} $.

The root datum 
$$ (Y_{Q,n},\Phi_{Q,n}^{\vee},\Delta_{Q,n}^{\vee}; \ X_{Q,n}, \Phi_{Q,n}, \Delta_{Q,n}) $$
gives rise to a complex linear group $\wt{G}^\vee$, which is called the dual group of $\wt{G}$. Let $ \mathbf{G}_{Q,n} $ be the split connected linear algebraic group over $ F $ whose Langlands dual group $G_{Q,n}^\vee$ is isomorphic to $ \ol{G}^{\vee} $. Let $ \mathbf{T}_{Q,n}\subseteq \mathbf{G}_{Q,n}$ be the maximal split torus contained in a Borel subgroup $\mathbf{B}_{Q,n} \subseteq \mbf{G}_{Q,n}$.

\subsection{Several Iwahori--Hecke type algebras}
Fix a splitting 
$$s: \mathbf{G}(\mathfrak{o}_F) \into \wt{G},$$
the existence of which follows from our assumption that $\pmb{\eta}$ is trivial, see \cite[\S 4]{GG18}. Let 
$$I\subset \mathbf{G}(\mathfrak{o}_F)$$
be the Iwahori subgroup determined by $\Delta $ and  let $I_1 \subset I$
 be the unique maximal pro-$p$ normal subgroup in $I$. By abuse of notation, we also write $I$ and $I_1$ for $s(I)$ and $s(I_1)$ respectively, and thus view them as subgroups of $\ol{G}$. Let 
$$\mathcal{H}_{\bar{\iota}}(\ol{G},I_1):=C^{\infty}_{\bar{\iota},\text{c}}(I_1\backslash \ol{G}/I_1) $$ to be the genuine pro-$ p $ Iwahori--Hecke algebra consisting of $ \bar{\iota} $-genuine locally-constant compactly-supported functions $ f $ on $ \ol{G} $ which are $ I_1 $-biinvariant, i.e., $ f(\zeta \gamma_1 g \gamma_2)=\iota(\zeta)^{-1}f(g) $ for all $ \zeta \in \mu_n, g\in \wt{G}$ and $ \gamma_1,\gamma_2 \in I_1 $.

Let 
$$\chi_0:\ol{\mathbf{T}}(\mathfrak{o}_F)\longrightarrow \C^{\times}$$
be the genuine character trivial on $\mathbf{T}(\mathfrak{o}_F)$, which we recall is used to denote $s(\mbf{T}(\mfr{o}_F))$. Write
\begin{equation} \label{D:X}
\mathscr{X}:=Y/Y_{Q,n}
\end{equation}
 and choose a set 
$$\set{y_0,y_1,...,y_{|\mathscr{X}|-1} } \subseteq Y$$ 
of representatives of $\mathscr{X}$ with $y_0=0$. The group $Y$ acts on $\wt{\mbf{T}}(\mfr{o}_F)$ by 
$y\cdot \wt{t}:= y(\varpi^{-1}) \wt{t} y(\varpi^{-1})^{-1}$, and this induces an action of $Y$ on $\chi_0$. We write
\begin{equation} \label{E:chi-i}
\chi_i:=y_i\cdot \chi_0: \ol{\mbf{T}}(\mfr{o}_F) \longrightarrow \C^\times
\end{equation}
for the character arising from this action.

Naturally, each $\chi_i$ gives a character of $\ol{I}$ and thus also the $\chi_i^{-1}$-spherical twisted Iwahori--Hecke algebra
$$\mathcal{H}_i:=C^{\infty}_{\text{c}}((\ol{I},\chi_i^{-1})\backslash \ol{G}/(\ol{I},\chi_i^{-1})),$$which consists of locally-constant compactly-supported functions $f$ on $\ol{G}$ satisfying $f(\gamma_1g\gamma_2)=\chi^{-1}_i(\gamma_1)f(g)\chi^{-1}_i(\gamma_2)$ for $g \in \ol{G}$ and $\gamma_1,\gamma_2\in \ol{I}$.
Let $\mca{H}(G_{Q,n}, I_{Q,n}):=C_c^\infty(I_{Q,n}\backslash G_{Q,n}/I_{Q,n})$ be the Iwahori--Hecke algebra of $G_{Q,n}$, where $I_{Q,n}$ is the Iwahori subgroup of $G_{Q,n}$ associated with $\Delta_{Q,n}$.
By \cite{Wang24}, there is a natural algebra isomorphism 
\begin{equation} \label{eq:Psi-i}
    \Psi_i:\mathcal{H}_i \rightarrow \mathcal{H}(G_{Q,n}, I_{Q,n}).
\end{equation}
For $i=0$, the above isomorphism was already established earlier by Savin \cite{Sav04}. We note that the isomorphism \eqref{eq:Psi-i} depends on a choice of distinguished genuine character, which is discussed in \cite[\S 6.4]{GG18}.

Let $$\text{Irr}_{\iota}(\ol G)^I \subset \Irr_\iota(\ol{G})$$ be the subset containing those Iwahori-spherical $\pi \in \Irr_\iota(\ol{G})$ such that $\pi^I\neq 0$. It is well-known that $\pi\mapsto \pi^I$ gives a bijection from $\text{Irr}_{\iota}(\ol G)^I$ to $\text{Irr}(\mathcal{H}_0)$.

Similarly, let $\text{Irr}(G_{Q,n})^{I_{Q,n}}$ be the set containing the irreducible representation $\pi_{Q,n}$ of $G_{Q,n}$ such that $\pi_{Q,n}^{I_{Q,n}}\neq 0$. Then $\pi_{Q,n} \mapsto \pi_{Q,n}^{I_{Q,n}}$ induces a bijection from $\text{Irr}(G_{Q,n})^{I_{Q,n}}$ to $\text{Irr}(\mathcal{H}(G_{Q,n}, I_{Q,n}))$. 
Thus via $\Psi_0$, we have a natural bijection 
\begin{equation} \label{E:rep_cor}
  \Psi^{\star}_0: \text{Irr}_{\iota}(\ol G)^I \longrightarrow\text{Irr}(G_{Q,n})^{I_{Q,n}}.
\end{equation}

\subsection{Kazhdan--Lusztig--Reeder parameters} \label{SS:KLR}
Consider a pair $(s, u)$, where $s \in T_{Q,n}^\vee$ and $u \in G_{Q,n}^\vee$ is unipotent such that $s u s^{-1} = u^q$.
We have the component group
$$A_{s, u}:=\pi_0({\rm Stab}_{G_{Q,n}^\vee}(s, u)/Z(G_{Q,n}^\vee))$$
associated with $s, u$. Let $\mfr{B}$ be the variety of all Borel subgroups of $G_{Q,n}^\vee$, and let $\mfr{B}_{s, u} \subseteq \mfr{B}$ be the subvariety consisting of those Borel subgroups which contain both $s$ and $u$. Then ${\rm Stab}_{G_{Q,n}^\vee}(s, u)$ acts naturally on $H_*(\mfr{B}_{s, u}, \C)$ and the action factors through $A_{s, u}$. A representation $\tau \in \Irr(A_{s, u})$ is called geometric if it appears in $H_*(\mfr{B}_{s, u}, \C)$. A Kazhdan--Lusztig--Reeder parameter is a triple 
$$(s, u, \tau)$$
 with $\tau \in \Irr(A_{s, u})$ being geometric. All such triples are endowed with a $ G^{\vee}_{Q,n}$-action. Denote by ${\rm Par}^{\rm KL}_{\rm geo}(G_{Q,n}^\vee)$ the set of $G^\vee_{Q,n}$-orbits of Kazhdan--Lusztig--Reeder triples.
One has the local Langlands correspondence
\begin{equation} \label{eq:LLC}
     \mathcal{L}^{\rm KL}_{G_{Q,n}}: \text{Irr}(\mathcal{H}(G_{Q,n}, I_{Q,n})) \rightarrow {\rm Par}^{\rm KL}_{\rm geo}(G_{Q,n}^\vee),
\end{equation}
as given in \cite{KL2,Ree4,ABPS17}. %\textcolor{red}{(Qn: is this really what Reeder generalized? The geometric condition et al...)}

Since $\ol{G}^{\vee}=G_{Q,n}^{\vee}$, we also view ${\rm Par}^{\rm KL}_{\rm geo}(G_{Q,n}^\vee)$ as ${\rm Par}^{\rm KL}_{\rm geo}(\ol{G}^\vee)$. 
Let $\pi \in \Irr_{\iota}(\ol{G})^I$ be Iwahori-spherical. Then $\pi^{I}$ is an irreducible representation of $\mathcal{H}_0$ and one can attach a Kazhdan--Lusztig--Reeder parameter $(s, u, \tau)$ to $\pi$ by using \eqref{E:rep_cor} and \eqref{eq:LLC}. Thus, we write
$$\pi(s, u, \tau) \in \Irr_\iota(\wt{G})^I$$
for $\pi$ to highlight its parameters.

\subsection{Aubert--Zelevinsky duality}
Let $R(\ol{G})$ be the Grothendieck group of smooth $\iota$-genuine representations of $\wt{G}$. The Aubert--Zelevinsky duality $\text{AZ}:R(\ol{G})\rightarrow R(\ol{G})$ (see \cite{Zel80, Aub95}) is given by 
\begin{equation} \label{E:AZdef}
    \text{AZ}(X)=\sum_{\ol{L}\text{ Levi}\atop \ol{B}\subset \ol{L}}(-1)^{\text{rank}({\ol{L}})}\text{ind}^{\ol{G}}_{\ol{L}}(\text{res}^{\ol{G}}_{\ol{L}}(X)).
\end{equation}
Here ${\rm res}_{\ol{L}}^{\ol{G}}$ is the Jacquet-module map and ${\rm ind}_{\ol{L}}^{\ol{G}}$ is the normalized parabolic induction.
For $\pi \in \Irr_\iota(\wt{G})^I$, one has $\text{AZ}(\pi) \in \Irr_\iota(\wt{G})^I$. Also, the bijection $\Psi_0^\star$ in \eqref{E:rep_cor} is ${\rm AZ}$-equivariant.

%\textcolor{red}{(Qn: why not say that \eqref{E:AZdef} defines covering AZ and $\Psi_0$ is AZ-equivariant?\textcolor{blue}{still need to say something about this})}
%\begin{comment}
%Let $R(G_{Q,n})$ be the Grothendieck group of smooth $G_{Q,n}$-representation. The Aubert--Zelevinsky duality $\text{AZ}:R(G_{Q,n})\rightarrow R(G_{Q,n})$ (see \cite{Zel80, Aub95}) is given by 
%\begin{equation} \label{E:AZdef}
%    \text{AZ}(X)=\sum_{L_{Q,n}\text{ Levi}\atop B_{Q,n}\subset L_{Q,n}}(-1)^{\text{rank}({L_{Q,n}})}\text{ind}^{G_{Q,n}}_{L_{Q,n}}(\text{res}^{G_{Q,n}}_{L_{Q,n}}(X)).
%\end{equation}
%Here ${\rm res}_{L_{Q,n}}^{G_{Q,n}}$ is the Jacquet-module map and ${\rm ind}_{L_{Q,n}^{G_{Q,n}}}$ is the normalized parabolic induction.
%For an irreducible Iwahori-spherical representation $\pi_{Q,n}$, we know $\text{AZ}(\pi_{Q,n})$ is also an irreducible Iwahori-spherical representation. With \eqref{E:rep_cor}, one can also define $$\text{AZ}:\text{Irr}_{\epsilon,I}(\ol G)\rightarrow \text{Irr}_{\epsilon,I}(\ol G).$$
%\textcolor{red}{(Qn: why not say that \eqref{E:AZdef} defines covering AZ and $\Psi_0$ is AZ-equivariant?)}
%\end{comment}

%%
\subsection{Local character expansion and wavefront sets}
Let $\mathfrak{g}$ be the Lie algebra of $G$. 
% Let $\mfr{g}^{\rm nil} \subset \mfr{g}$ denote the set of nilpotent elements of $\mathfrak g$. 
The group $G$ acts naturally on the set of nilpotent elements of $\mfr{g}$, and we write $\mathcal N(G)$ for the set of orbits under this action. 

For each nilpotent orbit $\mathcal O\in \mathcal N(G)$, let 
$$I_{\mathcal O}:C_c^\infty(\mathfrak g)\to \C$$
be the associated orbital integral. Every $\pi \in \Irr_\iota(\ol{G})$ defines a character distribution $\chi_\pi$ in a neighborhood of $0$ in $\mathfrak{g}$. There exists a compact open subset $U_\pi \subseteq \mfr{g}$ of 0 such that for all $f \in C^{\infty}_{c}(U_\pi)$ we have 
$$\chi_\pi(f) = \sum_{\mathcal O\in \mathcal N(G)}c_\pi(\mca{O})_\psi \cdot \hat I_{\mca{O}, \psi}(f)$$
with $c_\pi(\mca{O})_\psi \in \C$, see \cite{How1, HC99, Li3}. Here we have fixed a non-trivial additive character $\psi$ of $F$, and $\hat{I}_{\mca{O}, \psi}$ denotes the Fourier transform of $I_\mca{O}$ using this character.

The wavefront set of $\pi$ is defined to be 
$${\rm WF}(\pi) = \max\set{\mca{O} \in \mca{N}(G): c_\pi(\mca{O})_\psi \ne 0}$$
where the maximum is taken with respect to the topological closure ordering. Geometric nilpotent orbits are orbits under the action of $\mathbf{G}(\wt{F})$. The geometric wavefront set 
$$\rm WF^{\rm geo}(\pi)$$ is defined to be the set of maximal geometric orbits (with respect to the Zariski closure) that contain a nilpotent orbit $\mathcal O \in \WF(\pi)$.
It is well-known that the nilpotent orbits of a connected reductive group over either $\wt{F}$ or $\C$ are both classified by their weighted Dynkin diagrams. Thus, for a geometric nilpotent orbit over $\wt{F}$, we identify it with the corresponding nilpotent orbit over $\C$. 
%(\textcolor{red}{Or maybe just identify these two on purpose?})

%%
\subsection{Truncated induction and Springer correspondence}
We first recall briefly the notion of $j$-induction, see \cite[\S 11.2]{Car} for details.

The Weyl group $W$ acts on the space $V=Y\otimes \R$. Let $W' \subseteq W$ be a reflection subgroup. We can decompose 
$$V=V^{W'}\oplus V^{\prime}$$
as $W'$-modules.  Let 
$$\text{Sym}^{i}(V^{\prime})$$
 be the space of homogeneous polynomial functions on $V^{\prime}$ of degree $i$. Let $E' \in \Irr(W')$ be an irreducible representation of $W^{\prime}$. Assume that there exists an integer $e$ such that $E^\prime$ occurs in  $\text{Sym}^{e}(V^{\prime})$ with multiplicity $1$ and does not occur in $\text{Sym}^{i}(V^{\prime})$ for $0 \lest i <e$. Now we regard $E'$ as a subspace of $\text{Sym}^{e}(V)$ and one defines the $j$-induction 
 $$j^W_{W^\prime}E^\prime:=W \cdot E' \subseteq {\rm Sym}^e(V),$$
i.e., it is the $W$-module in $\text{Sym}^{e}(V)$ generated by $E'$.

 To recall the Springer correspondence \cite{Spr78, Sho88}, we temporarily (in this subsection) allow $F$ to be either as before or a finite field $\mbm{F}_q, q=p^f$ with $p$ being sufficiently large. Also, $\mbf{G}$ is a split connected reductive group over such $F$. As in \S \ref{SS:KLR}, let $\mfr{B}$ be the flag variety of all Borel subgroups of $\mbf{G}(\wt{F})$. For a nilpotent element $x\in \mfr{g}\otimes \wt{F}$ one has the subvariety $\mfr{B}_{u_x}$ of Borel subgroups containing the unipotent element $u_x$  associated with $x$. The group $\text{Stab}_{\mbf{G}_{ad}(\ol{F})}(u_x)$, which is the stabilizer of $u_x$ in $\mbf{G}_{ad}(\ol{F})$, acts on $\mfr{B}_{u_x}$. One has a well-defined action of $\text{Stab}_{\mbf{G}_{ad}(\ol{F})}(u_x)$ on the $l$-adic cohomology space $H^*(\mfr{B}_{u_x}, \wt{\Q}_l)$ which factors through the component group 
$$A_x:=\pi_0(\text{Stab}_{\mbf{G}_{ad}(\ol{F})}(u_x)).$$
There is a natural action of $W$ on $H^*(\mfr{B}_{u_x}, \wt{\Q}_l)$ which commutes with that of $A_x$. This gives a decomposition of the top degree cohomology space
$$H^{\rm top}(\mfr{B}_{u_x}, \wt{\Q}_l) = \bigoplus_{\eta \in \Irr(A_x)} \eta \boxtimes \sigma_\eta,$$
where $\sigma_\eta \in \set{0} \cup \Irr(W)$. There are many properties of the correspondence thus established, one of which concerns us is that every $\sigma \in \Irr(W)$ is isomorphic to $\sigma_\eta$ for a unique nilpotent orbit $\mca{O}_x$ of $x \in \mfr{g}\otimes \wt{F}$ and a unique $\eta \in \Irr(A_x)$. In fact, $A_x$ depends only on the conjugacy class $\mca{O}_x$ of $x$. Thus, for a nilpotent orbit $\mca{O} \subset \mfr{g}\otimes \wt{F}$, we use  $A_{\mca{O}}$ to denote $A_x$ for any $x\in \mca{O}$. Let $\mca{N}(\mbf{G}(\wt{F}))$ be the set of geometric nilpotent orbits of $\mbf{G}$. Defining
$$\mca{N}(\mbf{G}(\wt{F}))^{\rm en}=\set{(\mca{O}, \eta): \ \mca{O} \in \mca{N}(\mbf{G}(\wt{F})) \text{ and } \eta \in \Irr(A_\mca{O})},$$
we thus obtain an injective map
$$\begin{tikzcd}
{\rm Spr}:={\rm Spr}_\mbf{G}: \Irr(W) \ar[r, hook] & \mca{N}(\mbf{G}(\wt{F}))^{\rm en}
\end{tikzcd}$$
denoted by
$${\rm Spr}(\sigma)=(\mca{O}_{\rm Spr}^\mbf{G}(\sigma), \eta_\sigma);$$
we call $\mca{O}_{\rm Spr}(\sigma):=\mca{O}_{\rm Spr}^\mbf{G}(\sigma) \subset \mfr{g}\otimes \wt{F}$ the nilpotent orbit associated with $\sigma$. In particular, we have  $\mca{O}_{\rm Spr}(\mbm{1}) = \mca{O}_{\rm reg}$, the regular orbit; on the other hand, $\mca{O}_{\rm Spr}(\varepsilon_W) = \set{0}$, the trivial orbit. Note that for every $\mca{O} \in \mca{N}(\mbf{G}(\wt{F}))$, the pair $(\mca{O}, \mbm{1})$ lies in the image of ${\rm Spr}$, i.e., $(\mca{O}, \mbm{1}) = {\rm Spr}(\sigma_\mca{O})$ for a unique $\sigma_\mca{O} \in \Irr(W)$. For $(\mca{O}_u, \eta) \in \mca{N}(\mbf{G}(\ol{F}))^{\rm en}$, we may write
$$E_{u, \eta}:={\rm Spr}^{-1}(\mca{O}_u, \eta).$$

%This gives us a well-defined injective map
%$$\begin{tikzcd}
%{\rm Spr}_\mbm{1}^{-1}: \mca{N}(\mbf{G}(\wt{F})) \ar[r, hook] & \Irr(W) 
%\end{tikzcd}$$
%given by 
%$${\rm Spr}_\mbm{1}^{-1}(\mca{O}):={\rm Spr}^{-1}((\mca{O}, \mbm{1})).$$
%It is clear that $\mca{O}_{\rm Spr} \circ {\rm Spr}_\mbm{1}^{-1} = \text{id}$; however, $ {\rm Spr}_\mbm{1}^{-1}\circ \mca{O}_{\rm Spr}$ may not be the identity map on $\Irr(W)$ in general.

%%
\subsection{Covering Barbasch--Vogan duality} \label{SS:dBVn}
For the coordinates used in the Dynkin diagram of the root system of $\mbf{G}$, we follow Bourbaki's notation \cite{BouL2}. 
For any partition $\mfr{p}$ and $X \in \set{B, C, D}$, we use $\mfr{p}_X$ to denote the $X$-collapse of $\mfr{p}$. Also, if we write $\mfr{p}=(p_1, p_2, ..., p_k)$ in the non-increasing order, then we have the two operations
$$\mfr{p}^+:= (p_1 + 1, p_2, ..., p_k) \text{ and } \mfr{p}^-:=(p_1, p_2, ..., p_k -1).$$
These operations give rise to two natural maps
$$(\cdot)^-{}_C: \mca{N}(\SO_{2r+1}) \longrightarrow \mca{N}(\Sp_{2r})$$
and 
$$(\cdot)^+{}_B: \mca{N}(\Sp_{2r}) \longrightarrow \mca{N}(\SO_{2r+1}).$$

Now we recall the covering Barbasch--Vogan duality map
\begin{equation} \label{E:dBV}
d_{BV, G}^{(n)}: \mca{N}(\ol{G}^\vee) \longrightarrow \mca{N}(\mbf{G}(\ol{F}))
\end{equation}
defined and studied in \cite{GLLS}. The map $d_{BV,G}^{(n)}$ is order-reversing, i.e., $d_{BV,G}^{(n)}(\mca{O}^\vee) \gest d_{BV, G}^{(n)}(\mca{O}^\vee_1)$ if $\mca{O}^\vee \lest \mca{O}^\vee_1$, see \cite[Theorem 1.1]{GLLS}. Also, for classical type it is a ``deformation" of the type $A$ duality map. 

\subsubsection{\texorpdfstring{Type $A_r$}{}}
Let $G$ be of type $A_r$ with Dynkin diagram of simple roots given as follows: 
\vskip 5pt
$$
\qquad 
\begin{picture}(4.7,0.2)(0,0)
\put(1,0){\circle{0.08}}
\put(1.5,0){\circle{0.08}}
\put(2,0){\circle{0.08}}
\put(2.5,0){\circle{0.08}}
\put(3,0){\circle{0.08}}
\put(1.04,0){\line(1,0){0.42}}
\multiput(1.55,0)(0.05,0){9}{\circle{0.02}}
\put(2.04,0){\line(1,0){0.42}}
\put(2.54,0){\line(1,0){0.42}}
\put(1,0.1){\footnotesize $\alpha_{1}$}
\put(1.5,0.1){\footnotesize $\alpha_{2}$}
\put(2,0.1){\footnotesize $\alpha_{r-2}$}
\put(2.5,0.1){\footnotesize $\alpha_{r-1}$}
\put(3,0.1){\footnotesize $\alpha_{r}$}
\end{picture}
$$
\vskip 15pt
Let $\wt{G}$ be an $n$-fold cover of $G$ associated with $Q$.  
%The number $n_\alpha=n/\gcd(n,Q(\alpha^\vee))$ is independent of the choice of the root $\alpha$.  
The dual group $\wt{G}^\vee$ is also of type $A_r$.  Let
$$\mfr{p}=(p_1, p_2, ..., p_k)$$
be a partition of $r+1$, viewed as an orbit of $\wt{G}^\vee$. For every number $m, z \in \N_{\gest 1}$ we consider the partition 
\begin{equation} \label{smn}
\mfr{s}(m;z):=(z^a b)=(z, z, ..., z, b)
\end{equation}
of $m$, where $m=a\cdot z + b$ with $0\lest b < z$. 
Define
$$
d_{com}^{(z)}(\mfr{p}) := \sum_{i=1}^k \mfr{s}(p_i; z) 
$$
and 
\begin{equation} \label{E:dBV-A}
d_{BV,G}^{(n)}(\mfr{p}) := d_{com}^{(\nn)}(\mfr{p}) \in \mca{N}(\mbf{G}(\ol{F})).
\end{equation}
Here, the sum of partitions is the usual one, for example, $(3,2,1)+(5,4)+(3,2)=(11,8,1)$; it corresponds to the induction of orbits. The map $d_{com}^{(z)}$ is order-reversing. Note that if $G$ is a primitive cover (see \S \ref{SS:cov}), then $d_{BV, G}^{(n)} =d_{com}^{(n)}$. On the other extreme,
if $\nn=1$, then we get
$$d_{BV,G}^{(n)}(\mfr{p}) = \mfr{p}^\top = d_{BV, G}(\mfr{p}),$$
where $\mfr{p}^\top$ means the transpose of the partition and $d_{BV,G}$ is the type $A$ linear Barbasch--Vogan duality map.

\subsubsection{\texorpdfstring{Type $B_r$}{}} 
Consider $G$ of type $B_r$ with simple roots and the extended Dyndin diagram given as follows
\vskip 5pt
$$ \qquad 
\begin{picture}(4.7,0.2)(0,0)
\put(1,0.25){\circle{0.08}}
\put(1,-0.25){\circle*{0.08}}
\put(1.5,0){\circle{0.08}}
\put(2,0){\circle{0.08}}
\put(2.5,0){\circle{0.08}}
\put(3,0){\circle{0.08}}
\put(1.46,0.03){\line(-2,1){0.42}}
\multiput(1.55,0)(0.05,0){9}{\circle*{0.02}}
\put(2.04,0){\line(1,0){0.42}}
\put(2.54,0.015){\line(1,0){0.42}}
\put(2.54,-0.015){\line(1,0){0.42}}
\put(2.68,-0.05){{\large $>$}}
\put(1.46,-0.03){\line(-2,-1){0.43}}
\put(1,0.33){\footnotesize $\alpha_1$}
\put(1.5,0.1){\footnotesize $\alpha_2$}
\put(2,0.1){\footnotesize $\alpha_{r-2}$}
\put(2.5,0.1){\footnotesize $\alpha_{r-1}$}
\put(3,0.1){\footnotesize $\alpha_r$}
\put(1,-0.4){\footnotesize $\alpha_0$}
\end{picture}
$$
\vskip 30pt
Let $\wt{G}$ be a cover of $G$ associated with $Q: Y \to \Z$. We have
$$\nn=n_{\alpha_1} =
\begin{cases}
n_{\alpha_r} & \text{ if  $\nn$ is odd},\\
2n_{\alpha_r} & \text{ if  $\nn$ is even}.
\end{cases}
$$
In the first case (i.e., when $\nn$ is odd), the dual group $\wt{G}^\vee$ is of type $C_r$; in the second case, the dual group is of type $B_r$. For $\mfr{p} \in \mca{N}(\wt{G}^\vee)$, we define (see \cite{GLLS})
\begin{equation} \label{E:dBV-B}
d_{BV, G}^{(n)}(\mfr{p}):=
\begin{cases}
d_{com}^{(\nn)}(\mfr{p})^{+} {}_B & \text{ if  $\nn$ is odd},\\
%d_{BV,A}^{(n/2)}(\mfr{p})^{+}{}_B & \text{ if  $n$ is even with $n/2$ odd};\\
d_{com}^{(\nn)}(\mfr{p})_B & \text{ if  $\nn$ is even}.
\end{cases}
\end{equation}

\begin{eg}
Let $\wt{\SO}_{2r+1}$ be the $n$-fold cover of $\SO_{2r+1}$ associated with the unique $Q$ such that $Q(\alpha_1^\vee)=2$ (and thus $Q(\alpha_r^\vee)=4$). That is, $\wt{\SO}_{2r+1}$ is the $n$-fold cover obtained from restricting the $n$-fold cover of $\SL_{2r+1}$ associated with $Q(\alpha^\vee)=1$ for any root $\alpha$ of $\SL_{2r+1}$. One has
$$\wt{\SO}_{2r+1}^\vee \simeq
\begin{cases}
\Sp_{2r} & \text{ if $n$ is odd or $n=2k$ with $k$ odd},\\
\SO_{2r+1} & \text{ otherwise.}
\end{cases}
$$
Given any partition $\mfr{p}$ representing a nilpotent orbit of $\wt{\SO}_{2r+1}^\vee$, the orbit $d_{BV, \SO_{2r+1}}^{(n)}(\mfr{p})$ is given in \eqref{E:dBV-B} with $\nn=n/\gcd(2,n)$.
As another example, consider the cover $\wt{\Spin}_{2r+1}$ associated with $Q$ such that $Q(\alpha_1^\vee) = 1$. Thus $Q(\alpha_r^\vee)=2$. This is a primitive cover, as mentioned in \S \ref{SS:cov}.  In this case, we have (see \cite[\S 2.7]{Wei18a})
$$\wt{\Spin}_{2r+1}^\vee \simeq
\begin{cases}
{\rm PGSp}_{2r} & \text{ if $n$ is odd},\\
\text{ a group isogenous to } \Spin_{2r+1} & \text{ if $n$ is even.}
\end{cases}
$$
The map $d_{BV, \Spin_{2r+1}}^{(n)}$ is given by \eqref{E:dBV-B} with $\nn=n$.
\end{eg}

\subsubsection{\texorpdfstring{Type $C_r$}{}}
Consider $G$  of type $C_r$, whose simple roots and the extended Dynkin diagram are given as follows
\vskip 5pt
$$ \qquad 
\begin{picture}(4.2,0.2)(0,0)
\put(1,0){\circle{0.08}}
\put(0.5,0){\circle*{0.08}}
\put(1.5,0){\circle{0.08}}
\put(2,0){\circle{0.08}}
\put(2.5,0){\circle{0.08}}
\put(3,0){\circle{0.08}}
\put(1.04,0){\line(1,0){0.42}}
\multiput(1.55,0)(0.05,0){9}{\circle*{0.02}}
\put(2.04,0){\line(1,0){0.42}}
\put(2.54,0.015){\line(1,0){0.42}}
\put(2.54,-0.015){\line(1,0){0.42}}
\put(0.54,0.015){\line(1,0){0.42}}
\put(0.54,-0.015){\line(1,0){0.42}}
\put(2.68,-0.05){{\large $<$}}
\put(0.68,-0.05){{\large $>$}}
\put(1,0.1){\footnotesize $\alpha_1$}
\put(1.5,0.1){\footnotesize $\alpha_2$}
\put(2,0.1){\footnotesize $\alpha_{r-2}$}
\put(2.5,0.1){\footnotesize $\alpha_{r-1}$}
\put(3,0.1){\footnotesize $\alpha_r$}
\put(0.45,0.1){\footnotesize $\alpha_0$}
\end{picture}
$$
\vskip 15pt
Let $\wt{G}$ be a cover of $G$ associated with $Q: Y \to \Z$. We have
$$\nn= n_{\alpha_r} =
\begin{cases}
n_{\alpha_1} & \text{ if  $\nn$ is odd},\\
2n_{\alpha_1} & \text{ if  $\nn$ is even}.
\end{cases}
$$
In the first case, the dual group $\wt{G}^\vee$ is of type $B_r$; in the second case, it is of type $C_r$. For $\mfr{p} \in \mca{N}(\wt{G}^\vee)$, we define (see \cite{GLLS})
\begin{equation} \label{E:dBV-C}
d_{BV, G}^{(n)}(\mfr{p}):=
\begin{cases}
d_{com}^{(\nn)}(\mfr{p})^{-} {}_C & \text{ if  $\nn$ is odd},\\
((d_{com}^{(\nn/2)}(\mfr{p})^+)^-)_C & \text{ if $\nn$ is even with $\nn/2$ odd},\\
%d_{BV,A}^{(n/2)}(\mfr{p})^{+}{}_B & \text{ if  $n$ is even with $n/2$ odd};\\
d_{com}^{(\nn/2)}(\mfr{p})_C & \text{ if  $\nn$ is even with $\nn/2$ even}.
\end{cases}
\end{equation}
For example, let $\wt{\Sp}_{2r}$ be the $n$-fold cover of $\Sp_{2r}$ associated with $Q$ such that $Q(\alpha_r^\vee)=1$. 
It is a primitive cover and we have
$$
\wt{\Sp}_{2r}^\vee=
\begin{cases}
\SO_{2r+1} & \text{ if $n$ is odd},\\
\Sp_{2r} & \text{ if $n$ is even}.
\end{cases}
$$
The map $d_{BV,\Sp_{2r}}^{(n)}$ is given by \eqref{E:dBV-C} with $\nn=n$.
%Given any nilpotent orbit of $\wt{G}^\vee$ represented by a partition $\mfr{p}$, we define
%\begin{equation} \label{E:dBV-C}
%d_{BV,C}^{(n)}(\mfr{p}) := 
%\begin{cases}
%d_{BV,A}^{(n)}(\mfr{p})^{-} {}_C & \text{ if  $n$ is odd};\\
%((d_{BV,A}^{(n/2)}(\mfr{p})^+)^-)_C & \text{ if  $n$ is even with $n/2$ odd};\\
%d_{BV,A}^{(n/2)}(\mfr{p})_C & \text{ if  $n$ is even with $n/2$ even}.
%\end{cases}
%\end{equation}
% Here for a partition $\mfr{p}$, the partition $(\mfr{p}^+)^-$ is given as in \cite{BMSZ23}.

%%
\subsubsection{\texorpdfstring{Type $D_r$}{}} \label{SSS:D}
Consider $G$ of type $D_r$ with extended Dynkin diagram given as follows
\vskip 5pt
$$
\begin{picture}(4.7,0.4)(0,0)
\put(1,0.25){\circle{0.08}}
\put(1,-0.25){\circle*{0.08}}
\put(1.5,0){\circle{0.08}}
\put(2,0){\circle{0.08}}
\put(2.5,0){\circle{0.08}}
\put(3,0){\circle{0.08}}
\put(3.5, 0.25){\circle{0.08}}
\put(3.5, -0.25){\circle{0.08}}
\put(1.46,0.03){\line(-2,1){0.42}}
\put(1.54,0){\line(1,0){0.42}}
\multiput(2.05,0)(0.05,0){9}{\circle{0.02}}
\put(2.54,0){\line(1,0){0.42}}
\put(3.03,0.03){\line(2,1){0.43}}
\put(3.03,-0.03){\line(2,-1){0.43}}
\put(1.46,-0.03){\line(-2,-1){0.43}}
\put(1,0.36){\footnotesize $\alpha_1$}
\put(1.5,0.1){\footnotesize $\alpha_2$}
\put(2,0.1){\footnotesize $\alpha_3$}
\put(2.5,0.1){\footnotesize $\alpha_{r-3}$}
\put(2.9,0.15){\footnotesize $\alpha_{r-2}$}
\put(3.5,0.35){\footnotesize $\alpha_{r-1}$}
\put(3.5,-0.4){\footnotesize $\alpha_r$}
\put(1,-0.4){\footnotesize $\alpha_0$}
\end{picture}
$$
\vskip 35pt
Let $\wt{G}$ be an $n$-fold cover of $G$ associated with $Q: Y\to \Z$. The dual group is also of type $D_r$.
For each partition $\mfr{p}$ associated with a nilpotent orbit of $\wt{G}^\vee$, we define
\begin{equation} \label{E:dBV-D}
d_{BV,G}^{(n)}(\mfr{p}) := d_{com}^{(\nn)}(\mfr{p})_D
%\begin{cases}
%d_{com}^{(n_\alpha)}(\mfr{p})_D & \text{ if  $n$ is odd}.
%%d_{com}^{(n/2)}(\mfr{p})_D & \text{ if  $n$ is even}.
%\end{cases}
\end{equation}
If $r$ is even and $\mfr{p}$ is a very even partition, then there are two orbits $\mfr{p}^{\rm I}:=\mca{O}_\mfr{p}^{\rm I}, \mfr{p}^{\rm II}:=\mca{O}_\mfr{p}^{\rm II}$ associated with $\mfr{p}$, where we follow the convention of Lusztig on $\set{\rm I, II}$,  as discussed in detail in \cite{BMW25}. Suppose $d_{BV, G}^{(n)}(\mfr{p})$ is also a very even partition, then for $\set{\heartsuit, \spadesuit} = \set{\rm I, II}$ we require that
\begin{equation} \label{E:D-dec}
d_{BV,G}^{(n)}(\mfr{p}^\heartsuit) := 
\begin{cases}
d_{BV,G}^{(n)}(\mfr{p})^\heartsuit & \text{ if  $r/2$ is even},\\
d_{BV,G}^{(n)}(\mfr{p})^{\spadesuit} & \text{ if  $r/2$ is odd}.
\end{cases}
\end{equation}

As an example, consider the cover $\wt{\SO}_{2r}$ that is obtained from restricting a primitive $n$-fold cover $\wt{\SL}_{2r}$ to the embedded $\SO_{2r} \subset \SL_{2r}$. 
%Then $\wt{\SO}_{2r}$ is the one associated with $n_{\alpha_i}=n/\gcd(n,2)$ for every $1\lest i \lest r$. 
We have
$\wt{\SO}_{2r}^\vee \simeq \SO_{2r}$, and in this case $d_{BV,\SO_{2r}}^{(n)}$ is given in \eqref{E:dBV-D} and \eqref{E:D-dec} with $\nn=n/\gcd(n,2)$.
As another example, consider $\wt{\Spin}_{2r}$ associated with $Q$ such that $Q(\alpha_i^\vee)=1$ for any $i$; it is a primitive cover. The dual group $\wt{\Spin}_{2r}^\vee$ is isogenous to $\Spin_{2r}$, see \cite[\S 2.7]{Wei18a}, and $d_{BV, \Spin_{2r}}^{(n)}$ is given by \eqref{E:dBV-D} and \eqref{E:D-dec} with $\nn=n$.

\subsubsection{Exceptional types} \label{SS:dBV-exc}
Let $\wt{G}$ be the cover of an almost-simple exceptional group $G$ associated with $Q: Y\to \Z$. We define $d_{BV,G}^{(n)}$ as in \cite[Definition 2.1]{GLLS}. For primitive $\wt{G}$, an explicit form of $d_{BV,G}^{(n)}$ is given in the Appendix there. For general $\wt{G}$, the map $d_{BV,G}^{(n)}$ is just the one from \cite[Appendix]{GLLS} by substituting $\nn$ for $n$ there, the reason of which follows from \cite[\S 2.2]{GLLS}.

% We then define at the partition level that
%\begin{equation} \label{E:dBV-Spin}
%d_{BV,\Spin_{2r}}^{(n)}(\mfr{p}) := d_{BV,\SO_{2r}}^{(2n)}(\mfr{p}) = d_{BV,A}^{(n)}(\mfr{p})_D.
%\end{equation}
%Also, for very even partitions, the rule of assigning I and II for $d_{BV, \Spin_{2r}}^{(n)}$  follows the same as \eqref{E:D-dec}.

%\textbf{[To add.] \\We need some combinatorial preparations in this subsection: \begin{itemize}
%		\item partitions of nilpotent orbits of classical types
%		\item $\lambda_{B/C/D}$: the corresponding collapse
%		\item $\lambda^{B/C}$: $\lambda^{+}_B$ and $\lambda^{-}_C$ in \cite{Sommers}
%		\end{itemize}}

%%%
\section{Geometric wavefront sets for $\Irr_{\iota}(\ol{G})^I$}
Let $\pi=\pi(s,u,\tau) \in \Irr_\iota(\wt{G})^I$ be with Kazhdan--Lusztig--Reeder (KLR) parameter $(s,u,\tau)$ as in \S \ref{SS:KLR}. Let $\text{AZ}(\pi)$ be the Aubert--Zelevinsky dual of $\pi$. In this section, we study the geometric wavefront set ${\rm WF}^{\rm geo}({\rm AZ}(\pi))$ and relate it to the KLR parameter of $\pi$, especially to the nilpotent orbit $\mca{O}_u \in \mca{N}(\wt{G}^\vee)$ via $d_{BV, G}^{(n)}$.

\subsection{A first reduction}
Let $\mathcal{B}(G)$ be the Bruhat--Tits building of $G$. For $x \in \mathcal{B}(G)$, let $P_x$ be the parahoric subgroup attached to $x$ and let $P^{+}_x \subseteq P_x$ be its maximal pro-$p$ subgroup. The quotient 
$$L_{x}:=P_{x}/P_{x}^{+}$$ is isomorphic to the $\kappa_F$-points of a split connected reductive group $\mathbf{L}_{x}$ over $\kappa_F$. Let $W_x$ be the Weyl group of $L_x$ associated with root system $\Phi_x:=\{\alpha \in \Phi: \langle \alpha,x\rangle \in \Z\}$. Since we have assumed $p\nmid n$, the group $\wt{G}$ splits uniquely over $P_x^+$, and we get a $\mu_n$-central extension $\wt{L}_x$ of $L_x$. For every $\pi \in \Irr_\iota(\ol{G})$ we consider
$$\pi_x:=\pi^{P^+_x},$$
which is naturally a genuine $\ol{L}_x$-representation.

Consider the universal tame extension
$$\begin{tikzcd}
\mu_{q-1} \ar[r, hook] & \overline{G}^{\rm uni} \ar[r, two heads] & G
\end{tikzcd}$$
of $G$, which is of degree $q-1$. Similar as $L_x$, one has a $\mu_{q-1}$-extension $\wt{L}_x^{\rm uni}$. It is known (see \cite{BD01, We2}) that $\ol {L}^{\rm uni}_x$ is reductive, i.e., it is actually the $\kappa_F$-points of a reductive group over $\kappa_F$. Thus, one can define the Kawanaka wavefront sets  of representations of $\wt{L}_x^{\rm uni}$, see \cite{Oka21, KOW}. 

We denote by $\mbf{L}_x^F \subseteq \mbf{G}$ the split algebraic  subgroup over $F$ with the same root datum as $\mbf{L}_x$. By \cite[Lemma 2.16]{KOW}, there is a bijection between the geometric conjugacy classes of nilpotent orbits of $\ol{L}^{\text{uni}}_x$ and nilpotent orbits of the group $\mathbf{L}_x^F(\wt{F})$, which we denote by 
$$\mathcal{O} \mapsto \mathcal{O}(\wt{F}).$$
Let $\pi^{\rm uni} \in \Irr_\iota(\wt{G}^{\rm uni})$ be the representation of $\ol{G}^{\rm uni}$ obtained as the pull-back of $\pi\in \Irr_\iota(\ol{G})$ via the map $\wt{G}^{\rm uni} \onto \wt{G}$.

Henceforth, we fix $\mathcal{C}$ to be the set of vertices of the fundamental alcove in $\mca{B}(G)$.

\begin{thm}[\cite{KOW}, Theorem 2.22, Lemma 2.23] \label{T:red}
Assume that $p$ is sufficiently large. For every $\pi \in \Irr_\iota(\wt{G})^I$ the following holds:
\quad
    \begin{itemize}
        \item[(i)] $\WF(\pi)=\WF(\pi^{\rm uni})$;
        \item[(ii)] $\WF^{\rm geo}(\pi^{\rm uni})=\text{\rm max}\{\text{\rm Sat}^{\mathbf{G}(\wt{F})}_{\mathbf{L}_x^F(\wt{F})}(^{\ol{\mbm{F}}_q}\text{\rm WF}(\pi^{\rm uni}_x)(\wt{F})): \ x\in\mca{C} \},$ where $^{\ol{\mbm{F}}_q}\text{\rm WF}(-)$ denotes the geometric Kawanaka wavefront set over $\wt{\mbm{F}}_q$.
    \end{itemize}
\end{thm}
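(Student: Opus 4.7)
The plan is to reduce both assertions to depth-zero theory on the Bruhat--Tits building together with a homogeneity theorem for the Harish-Chandra--Howe expansion. For part (i), I would first note that $\wt{G}^{\rm uni}\twoheadrightarrow \wt{G}$ is a central cover with kernel in $\mu_{q-1}$, and that both $\wt{G}$ and $\wt{G}^{\rm uni}$ split canonically over every pro-$p$ subgroup. The pull-back of $\chi_\pi$ then agrees with $\chi_{\pi^{\rm uni}}$ on a pro-$p$ neighborhood of the identity, which is large enough to read off the local expansion in $\mfr{g}$ via $\exp$. Since the coefficients $c_\pi(\mca{O})_\psi$ depend only on this restriction, they agree for $\pi$ and $\pi^{\rm uni}$, giving $\WF(\pi)=\WF(\pi^{\rm uni})$.

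For part (ii), I would exploit that $\pi$, hence $\pi^{\rm uni}$, is Iwahori-spherical and therefore of Moy--Prasad depth zero. A covering analogue of DeBacker's homogeneity theorem should then imply that, for each vertex $x$ of the building, the restriction of $\chi_{\pi^{\rm uni}}$ to the Moy--Prasad lattice $\mfr{g}_{x,>0}$ is a linear combination of Fourier transforms $\hat I_{\mca{O},\psi}$ that descends to the finite Lie algebra $\mfr{l}_x=\mfr{g}_{x,\gest 0}/\mfr{g}_{x,>0}$, where it coincides with the character distribution of $\pi_x^{\rm uni}$ as a representation of $\wt{L}_x^{\rm uni}$. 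Matching coefficients through the bijection $\mca{O}\mapsto \mca{O}(\wt{F})$ of \cite[Lemma 2.16]{KOW} identifies the orbits supporting the $p$-adic expansion, restricted to $\mfr{g}_{x,>0}$, with the Kawanaka wavefront orbits of $\pi_x^{\rm uni}$ transferred to $\mbf{L}_x^F(\wt{F})$. Taking $\mbf{G}(\wt{F})$-saturations and then the maximum as $x$ ranges over the $G$-orbit representatives $\mca{C}$ yields the asserted formula; the maximum mirrors the topological maximality built into the definition of $\WF^{\rm geo}$.

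The principal obstacle I expect is the rigorous comparison between the $p$-adic Fourier transform of orbital integrals on $\mfr{g}$ and the finite-field Fourier transform underlying the Kawanaka model on $\mfr{l}_x$. For linear reductive groups this is the combination of DeBacker's homogeneity theorem with the Lusztig--Spaltenstein induction of nilpotent orbits, and both demand explicit lower bounds on $p$ coming from the nilpotent exponential and the component groups of orbit centralizers. In the covering setting one additionally needs to verify that the splitting of $\wt{G}^{\rm uni}$ over $P_x^+$ is compatible with Moy--Prasad descent and that the resulting orbit correspondence preserves the genuine twist by $\iota$; these are precisely the hypotheses 2.10, 2.14, and 2.17 of \cite{KOW} invoked at the start of \S \ref{Prelim}, which explains why a lower bound on $p$ must enter the statement.
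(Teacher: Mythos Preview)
The paper does not supply its own proof of this theorem: it is quoted verbatim from \cite{KOW} (Theorem 2.22 and Lemma 2.23 there) and used as a black box input to the rest of the argument. So there is no in-paper proof to compare your proposal against.

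That said, your sketch is a faithful outline of the strategy behind the cited result. Part (i) is indeed immediate from the fact that $\wt{G}^{\rm uni}\to\wt{G}$ has kernel of order prime to $p$ and both covers split over pro-$p$ subgroups, so the characters agree on a neighborhood where the local expansion is valid. Part (ii) is exactly the depth-zero homogeneity mechanism you describe: one uses a Moy--Prasad/DeBacker-type homogeneity statement for covers (this is where the hypotheses 2.10, 2.14, 2.17 of \cite{KOW} enter, as you correctly note) to descend the support of the local expansion to the reductive quotients $\wt{L}_x^{\rm uni}$, and then the finite-field Kawanaka wavefront set picks out the maximal orbits, which are lifted back via saturation. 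Your identification of the main technical point---matching the $p$-adic and finite-field Fourier transforms through the orbit correspondence $\mca{O}\mapsto\mca{O}(\wt{F})$---is on target. If you intend to write this up independently rather than cite \cite{KOW}, the substantive work lies in establishing the covering analogue of homogeneity and in checking that the Kawanaka theory on $\wt{L}_x^{\rm uni}$ (which is reductive but not a priori the $\kappa_F$-points of a connected group) behaves as expected; your sketch acknowledges but does not carry out these steps.
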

%\textcolor{red}{(Qn: can remove $\C$? we need $p$ to be big enough here?)}

\subsection{Computation of the Kawanaka wavefront set} \label{SS:KWF}
In this subsection, $\mbf{G}$ denotes a split group over $\mbm{F}_q$. 
Let $ (\Omega, V) $ be an irreducible representation of $ \mathbf{G}(\mathbbm{F}_q) $. In \cite{Lus84-B,Lus88}, Lusztig presented a parametrization of $ (\Omega,V) $ in terms of a commuting pair $(s, u)$ up to conjugacy, where $ s $ is a semisimple element in the dual group $ G^*$ of $\mbf{G}$ and $u \in G^*$ a special unipotent element. Let $C_{G^*}(s)$ be the centralizer of $s$ in $G^*$ and $C^0_{G^*}(s)$ be its connected component. 
% Let $ W^*(s) $ be the Weyl group of $ C_{G^*}(s) $ and $W^{*,0}(s)$ be the Weyl group of $C^0_{G^*}(s)$. 
Let $W^{*,0}(s)$ be the Weyl group of $C^0_{G^*}(s)$.
Set
$$E_u^*:={\rm Spr}^{-1}(\mca{O}_u, \mbm{1}) \in \Irr(W^{*,0}(s)),$$
i.e.,  the representation of $W^{*,0}(s)$ associated with $\mca{O}_u$ and the trivial local system. 

One has a canonical isomorphism $W \to W^*$ between the Weyl groups of $\mbf{G}$ and $G^*$. It induces a bijection between $\Irr(W)$ and $\Irr(W^*)$, denoted by $E \leftrightarrow E^*$ for their elements. This also applies to subgroups of $W$ and $W^*$ which correspond to each other under the above isomorphism $W \to W^*$.
In particular, $W^0(s) \subseteq W$ corresponds to $W^{*,0}(s) \subseteq W^*$, and $E_u \in \Irr(W^0(s))$ corresponds to $E_u^* \in \Irr(W^{*,0}(s))$.
Following this convention on the notation, for the Kawanaka wavefront set of $(\Omega, V)$ one has (see \cite[Theorem 14.16]{Tay16})
\begin{equation} \label{E:WF1}
{}^{\overline{\mbm{F}}_q}\text{WF}(\Omega) = \mca{O}_{\text{Spr}}(j^{W}_{W^0(s)}E_u).
\end{equation}

We consider some special cases. First, we call $\Omega \in \Irr(\mbf{G}(\mathbbm{F}_q))$ a unipotent principal series representation if $\Omega^{\mbf{B}(\mbm{F}_q)} \ne 0$, or equivalently, $\Omega$ is an irreducible constituent of the principal series representation of $\mbf{G}(\mbm{F}_q)$ induced from the trivial character of $\mbf{B}(\mbm{F}_q)$. In this case, the space $\Omega^{\mbf{B}(\mbm{F}_q)}$ affords an irreducible representation of the Hecke algebra $\mathcal{H}(\mathbf{B}(\mbm{F}_q)\backslash \mathbf{G}(\mbm{F}_q)/\mathbf{B}(\mbm{F}_q))$. Now, from the Lusztig isomorphism 
\begin{equation}
	\C[W] \rightarrow \mathcal{H}(\mathbf{B}(\mbm{F}_q)\backslash \mathbf{G}(\mbm{F}_q)/\mathbf{B}(\mbm{F}_q))
\end{equation}
of algebras, the representation $\Omega$ gives rise to an irreducible representation of $\C[W]$, which we denote by $\Omega_{q\to 1}$. By \cite[Proposition 12.6]{Lus84-B}, for such unipotent $\Omega$ we have
\begin{equation}
^{\overline{\mathbbm{F}}_q}\text{WF}(\Omega)=\mathcal{O}_{\text{Spr}}((\Omega_{q\rightarrow 1}\otimes \varepsilon_W )^{\text{spe}}), 
\end{equation}
where $E^{\text{spe}} \in \Irr(W)$ denotes the unique special representation in the same Lusztig family as $E\in \Irr(W)$.

In general, let $\chi: \mbf{T}(\mbm{F}_q) \to \C^\times$ be a character which corresponds to an element $s:=s_\chi \in G^*$. It is also viewed as a character of $\mbf{B} (\mbm{F}_q)$ trivial on the unipotent subgroup. We call $\Omega \in \Irr(\mbf{G}(\mbm{F}_q))$ a general principal series representation if $\Omega^{(\mbf{B}(\mbm{F}_q),\chi)} \neq 0$ for some $\chi$. For general $\chi$, one has an embedding 
\begin{equation} 
	\C[W^0(s)] \hookrightarrow \mathcal{H}((\mathbf{B}(\mbm{F}_q),\chi^{-1})\backslash \mathbf{G}(\mbm{F}_q)/(\mathbf{B}(\mbm{F}_q),\chi^{-1})).
\end{equation}
Similar to the unipotent case above, a general principal series representation $\Omega$ gives rise to a representation (not necessarily irreducible) $ \Omega_{q \rightarrow 1}$ of $W^0(s)$.

%We record the formula of Kawanaka wavefront set for such $\Omega$.

\begin{lm}
For a general principal series representation $\Omega \in \Irr(\mbf{G}(\mathbbm{F}_q))$ , one has 
\begin{equation} \label{E:Kaw1}
^{\overline{\mathbbm{F}}_q}\WF(\Omega)=\mathcal{O}_{\rm Spr}(j^W_{W^0(s)}(\sigma \otimes \varepsilon)^{\rm spe}),
\end{equation}
where $\sigma \subseteq \Omega_{q\to 1}$ is an irreducible constituent occurring in $\Omega_{q\to 1}$.
\end{lm}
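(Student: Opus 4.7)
The plan is to start from Taylor's formula \eqref{E:WF1}, ${}^{\overline{\mbm{F}}_q}\WF(\Omega) = \mca{O}_{\rm Spr}(j^W_{W^0(s)} E_u)$, and reduce the lemma to the identification
\begin{equation*}
E_u = (\sigma \otimes \varepsilon_{W^0(s)})^{\rm spe} \quad \text{in } \Irr(W^0(s)),
\end{equation*}
for any irreducible constituent $\sigma \subseteq \Omega_{q\to 1}$. Substituting this into \eqref{E:WF1} and applying $\mca{O}_{\rm Spr}\circ j^W_{W^0(s)}$ immediately yields \eqref{E:Kaw1}.

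To establish the identification, I would invoke Lusztig's Jordan decomposition together with the Howlett--Lehrer description of the principal-series Hecke algebra. By the latter, $\mathcal{H}((\mbf{B}(\mbm{F}_q),\chi^{-1})\backslash \mbf{G}(\mbm{F}_q)/(\mbf{B}(\mbm{F}_q),\chi^{-1}))$ is isomorphic to an extended Iwahori--Hecke algebra $\mathcal{H}(W(s), L)$ attached to the (possibly disconnected) group $W(s) = W^0(s) \rtimes R(s)$ with parameters powers of $q$, and the embedding $\C[W^0(s)] \hookrightarrow \mathcal{H}(\cdots)$ in the lemma is precisely the canonical one. Under this isomorphism, the irreducible module $\Omega^{(\mbf{B},\chi)}$ restricts, after specialization $q\to 1$, to $\Omega_{q\to 1}$; its irreducible $W^0(s)$-constituents form a single $R(s)$-orbit inside one Lusztig family. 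Lusztig's Jordan decomposition then matches $\Omega$ with a unipotent principal-series representation $\Omega^{\natural}$ of a reductive group over $\mbm{F}_q$ whose dual contains $C^0_{G^*}(s)$ and whose Weyl group is $W^0(s)$, in such a way that the special unipotent class $\mca{O}_u$, the Springer representation $E_u \in \Irr(W^0(s))$, and the Weyl-group module $\Omega^{\natural}_{q\to 1}$ all coincide with their $\Omega$-counterparts.

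Applying the unipotent principal-series formula recalled just above to $\Omega^{\natural}$ inside the Jordan-dual group gives its Kawanaka wavefront as $\mca{O}_{\rm Spr}((\sigma \otimes \varepsilon_{W^0(s)})^{\rm spe})$, while Taylor's formula \eqref{E:WF1} applied to the same $\Omega^{\natural}$ gives $\mca{O}_{\rm Spr}(E_u)$; injectivity of the Springer correspondence then forces the desired identity $E_u = (\sigma \otimes \varepsilon_{W^0(s)})^{\rm spe}$. Independence of the choice of constituent $\sigma$ is automatic because the sign twist and the operation of taking the special representative in a Lusztig family are both invariant under the $R(s)$-action on the family. The main obstacle is to make the Jordan-decomposition reduction compatible at the level of $W^0(s)$-modules and Lusztig families: one must verify that restriction from $\mathcal{H}(W(s),L)$-modules to $\C[W^0(s)]$-modules matches the corresponding restriction on the unipotent Hecke-algebra side, and that taking the special representative commutes with this restriction. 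This is implicit in Lusztig's classification of irreducible representations, but requires careful tracking, especially in exceptional types where families have richer internal structure.
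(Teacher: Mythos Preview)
Your overall reduction is the same as the paper's: both start from Taylor's formula \eqref{E:WF1} and reduce to showing $E_u=(\sigma\otimes\varepsilon)^{\rm spe}$ in $\Irr(W^0(s))$. The route to that identity, however, differs. The paper does not invoke the Jordan decomposition or the Howlett--Lehrer machinery; instead it uses a regular embedding $\mathbf{G}\hookrightarrow\mathbf{G}'$ into a group with connected center (as in \cite{Lus88}). For $\mathbf{G}'$ one has $C_{G'^*}(s)=C^0_{G'^*}(s)$, so $\Omega'_{q\to1}$ is already irreducible, and the identity $E_u=(\Omega'_{q\to1}\otimes\varepsilon)^{\rm spe}$ is read off directly from \cite[Proposition~12.6]{Lus84-B} and \cite[\S5]{AM22}. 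Since $\Omega'_{q\to1}$ is a constituent of $\Omega_{q\to1}$ and the unipotent parameter $u$ is unchanged under restriction from $\mathbf{G}'$ to $\mathbf{G}$, this gives the lemma for the particular choice $\sigma=\Omega'_{q\to1}$. This bypasses precisely the compatibility problem you flag at the end: no tracking of Hecke-module restrictions or family structure across Jordan decomposition is needed. Note also that the paper's proof only establishes \eqref{E:Kaw1} for \emph{one} constituent $\sigma$; the independence of the choice of $\sigma$ is not argued via $R(s)$-invariance as you propose, but is simply cited from \cite[\S12.7]{Tay16} in the remark following the proof.
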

\begin{proof}
If $\textbf{G}$ has connected center, then $C_{G^*}(s)=C^0_{G^*}(s)$. In this case, $\Omega_{q\rightarrow 1}$ is irreducible and we know from \cite[\S 5]{AM22} and \cite[Proposition 12.6]{Lus84-B} that $E_u=(\Omega_{q\rightarrow1} \otimes \varepsilon )^{\text{spe}}$. The result then follows from \eqref{E:WF1}.

In general, we need to consider a regular embedding $\textbf{G}(\mathbbm{F}_q) \hookrightarrow \textbf{G}^{\prime}(\mathbbm{F}_q)$ such that $\textbf{G}'$ has connected center, as in \cite{Lus88}. Let $\Omega^{\prime}\in \text{Irr}(\textbf{G}^{\prime}(\mathbbm{F}_q))$ be containing $\Omega$. Let $\chi^{\prime}: \textbf{B}^{\prime}(\mathbbm{F}_q)\rightarrow \C^{\times}$ be an extension of $\chi$ such that $(\Omega')^{(\textbf{B}^{\prime}(\mathbbm{F}_q),\chi^\prime)}\neq 0$. Then $\C[W^0(s)] \simeq \mathcal{H}((\mathbf{B}^{\prime}(\mbm{F}_q),\chi^{\prime-1})\backslash \mathbf{G}^{\prime}(\mbm{F}_q)/(\mathbf{B}^{\prime}(\mbm{F}_q),\chi^{\prime-1}))$ and $\Omega^{\prime}_{q\rightarrow 1}$ is an irreducible constituent of $\Omega_{q\rightarrow1}$. Since the unipotent parameter for $\Omega$ can be taken to be the same as for $\Omega^{\prime}$, we see the lemma holds for $\Omega$ in view of \eqref{E:WF1}, by taking $\sigma:=\Omega'_{q\to 1}$.
\end{proof}

In fact, by \cite[\S 12.7]{Tay16}, the equality in \eqref{E:Kaw1} holds for any irreducible constituent $\sigma$ of $\Omega_{q \to 1}$.  Thus we will simply write 
\begin{equation} \label{E:Kaw2}
 ^{\overline{\mathbbm{F}}_q}\WF(\Omega)=\mathcal{O}_{\text{Spr}}(j^W_{W^0(s)}(\Omega_{q\to 1} \otimes \varepsilon )^{\text{spe}})
 \end{equation}
in the place of \eqref{E:Kaw1}.

\subsection{A formula for the geometric wavefront set} \label{SS:RepIH}
Let $\pi:=\pi(s, u, \tau) \in \Irr_\iota(\ol{G})^I$. The Kazhdan--Lusztig--Reeder triple $ (s,u,\tau) $ also gives an Iwahori-spherical representation $\pi_{Q,n}:= \pi_{G_{Q,n}}(s,u,\tau) $ of $G_{Q,n}$. Recall that $\mathcal{C}$ denotes the set of vertices of the fundamental alcove. We need to determine $\pi^{\text{uni}}_x, x\in \mca{C}$ and ${}^{\wt{\mbm{F}}_q} \WF(\pi_x^{\rm uni})$ that appear in Theorem \ref{T:red}.

First note that 
$$ \ol{B}_x:=\ol{I}/P^+_x \subset \ol{P}_x/P^+_x$$
is a Borel subgroup of $\ol{L}_x$. We also have the Borel subgroup $\ol{B}^{\text{uni}}_x$ of $\ol{L}^{\text{uni}}_x$. The character $\chi_i$ in \eqref{E:chi-i} descends to a character of $\wm{T}(\kappa_F)$, and we further view it as a character of $\ol{B}_x$ and $\ol{B}^{\text{uni}}_x$ via the natural pull-backs.

Since 
$$(\pi^{\text{uni}}_x)^{(\ol{B}^{\text{uni}}_x,\chi_0)}=(\pi_x)^{(\ol{B}_x,\chi_0)}=\pi^{(\ol{I},\chi_0)} \neq 0,$$
it follows from \cite[Propositions 3.5 and 3.6]{HW09} that every irreducible constituent of $\Omega \subseteq \pi^{\text{uni}}_x$ is a principal series representation of $\ol{L}^{\text{uni}}_x$, i.e., $\Omega^{(\ol{B}^{\text{uni}}_x,\chi)} \neq 0$ for some character $\chi: \wt{B}_x^{\rm uni} \to \C^\times$. Moreover, we note that \cite[Proposition 3.2 (5)]{KOW} was stated for theta representations, but its proof clearly applies to genuine Iwahori-spherical representations. Thus, it follows from \cite[Proposition 3.2 (5)]{KOW} that, if $\Omega^{(\ol{B}^{\text{uni}}_x,\chi)} \neq 0$, then $\chi$ is equal to some $\chi_i$ defined in \eqref{E:chi-i}.
This gives the inclusion
$$\Omega^{(\ol{B}^{\text{uni}}_x,\chi_i)} \into (\pi_x^{\rm uni})^{(\ol{B}^{\text{uni}}_x,\chi_i)} $$
as modules over the algebra $\mathcal{H}((\overline{B}_x^{\rm uni},\chi_i^{-1})\backslash \ol{L}^{\text{\rm uni}}_x/(\overline{B}_x^{\rm uni},\chi_i^{-1}))$, and $\Omega^{(\ol{B}^{\text{uni}}_x,\chi_i)} \ne \set{0}$ is irreducible.
	
In view of this, we only need to determine the action of $\mathcal{H}((\overline{B}_x^{\rm uni},\chi_i^{-1})\backslash \ol{L}^{\text{\rm uni}}_x/(\overline{B}_x^{\rm uni},\chi_i^{-1}))$ on the space $(\pi_x^{\rm uni})^{(\wt{B}_x^{\rm uni}, \chi_i)}$. Consider the isomorphism 
$$ \mathcal{H}((\overline{B}_x^{\rm uni},\chi_i^{-1})\backslash \ol{L}^{\text{\rm uni}}_x/(\overline{B}_x^{\rm uni},\chi_i^{-1})) \simeq \mathcal{H}((\overline{I}^{\rm uni},\chi_i^{-1})\backslash \ol{P}^{\text{\rm uni}}_x/(\overline{I}^{\rm uni},\chi_i^{-1})) $$
induced by the projection $\ol{P}^{\text{\rm uni}}_x \onto  \ol{L}^{\text{\rm uni}}_x  $. 
We have the identification $(\pi_x^{\rm uni})^{(\wt{B}_x^{\rm uni}, \chi_i)} = (\pi^{\rm uni})^{(\wt{I}^{\rm uni}, \chi_i)}$, which is equivariant with respect to the preceding isomorphism of algebras. Hence, we are further reduced to determine the action of $\mathcal{H}((\overline{I}^{\rm uni},\chi_i^{-1})\backslash \ol{P}^{\text{\rm uni}}_x/(\overline{I}^{\rm uni},\chi_i^{-1}))$ on $(\pi^{\rm uni})^{(\wt{I}^{\rm uni}, \chi_i)}$.

By \cite[Lemma 3.13]{KOW}, the projection map $\ol{G}^{\text{uni}}\twoheadrightarrow \ol{G}$ naturally induces an isomorphism 
$$\mathcal{H}((\overline{I}^{\rm uni},\chi_i^{-1})\backslash \ol{P}^{\rm uni}_x/(\overline{I}^{\rm uni},\chi_i^{-1})) \simeq \mathcal{H}((\overline{I},\chi_i^{-1})\backslash \ol{P}_x/(\overline{I},\chi_i^{-1}))$$
of algebras. Therefore, it suffices to determine how $\mathcal{H}((\overline{I},\chi_i^{-1})\backslash \ol{P}_x/(\overline{I},\chi_i^{-1}))$ acts on $\pi^{(\ol{I},\chi_i)} \simeq (\pi^{\rm uni})^{(\wt{I}^{\rm uni}, \chi_i)}$. Following this and using the isomorphism
\begin{equation} \label{E:P}
\mathcal{P}: \mathcal{H}((\overline{B}_x,\chi_i^{-1})\backslash \ol{L}_x/(\overline{B}_x,\chi_i^{-1})) \simeq \mathcal{H}((\overline{I},\chi_i^{-1})\backslash \ol{P}_x/(\overline{I},\chi_i^{-1}))
 \end{equation}
we can use \eqref{E:Kaw2} to compute the geometric Kawanaka wavefront set ${}^{\wt{\mbm{F}}_q} \WF(\pi_x^{\rm uni})$.

Let $\chi_i: \wt{I} \to \C^\times$ be as in \eqref{E:chi-i} with $0\lest i < \val{\msc{X}}$. 
Since we have an inclusion 
$$\mathcal{H}((\overline{I},\chi_i^{-1})\backslash \ol{P}_x/(\overline{I},\chi_i^{-1})) \into \mathcal{H}((\overline{I},\chi_i^{-1})\backslash \overline{G}/(\overline{I},\chi_i^{-1})) $$
of algebras, the latter of which acts naturally on $\pi^{(\wt{I}, \chi_i)}$, we will work out the module structure of $\pi^{(\wt{I}, \chi_i)}$ over  $\mathcal{H}_i:=\mathcal{H}((\overline{I},\chi_i^{-1})\backslash \overline{G}/(\overline{I},\chi_i^{-1}))$. In fact, we will do so by transporting the action via $\Psi_i$ in \eqref{eq:Psi-i}.

If an element in $\mca{H}_i$ has support of the form $\wt{I}\cdot \varpi^w \wt{I}$ with $w\in W\ltimes Y$, by abuse of language we say that it has support on the double set of $w$. It follows from \cite[\S 5]{Wang24} that the isomorphism
$$\Psi_i:\mathcal{H}_i\rightarrow \mathcal{H}(G_{Q,n}, I_{Q,n})$$
in \eqref{eq:Psi-i} maps an element supported on the double set of $w \in W\ltimes Y$ to an element supported on the double set of 
$$(w_i y_i^{-1}) w (y_i w_i^{-1}) \in W \ltimes Y_{Q,n}$$
for some specific $ w_i=\widehat{w}_i\cdot y^{Q,n}_i \in W\ltimes Y_{Q,n}$ depending on $\chi_i$, and $y_i \in Y$ is the element such that $\chi_i = y_i \cdot \chi_0$ as in \eqref{E:chi-i}. For  the parahoric subgroup $ \overline{P}_x \subseteq \overline{G} $ associated with $x\in \mca{C}$, we have 
\begin{equation} \label{iso}
\Psi_i: \mathcal{H}((\overline{I},\chi_i^{-1})\backslash \overline{P}_x/(\overline{I},\chi_i^{-1})) \simeq \mathcal{H}(I_{Q,n}\backslash P^{Q,n}_{w_i\cdot (x-y_i)}/I_{Q,n}),
\end{equation}
where $ P^{Q,n}_{w_i\cdot (x-y_i)} $ is the parahoric subgroup of $ G_{Q,n} $ associated with $w_i\cdot (x-y_i)$. Here $w_i \cdot (x-y_i) \in Y\otimes \R$ arises from the natural action of $w_i \in W\ltimes Y_{Q,n}$ on $x-y_i \in Y\otimes \R$, and it lies in the closure of the fundamental alcove associated with $G_{Q,n}$.

We observe that 
\begin{equation} \label{E:Phi-Qn-z}
\Phi^{Q,n}_{w_i\cdot (x-y_i)}=\{\alpha/n_{\alpha} \in \Phi_{Q,n}:\ \langle \alpha/n_{\alpha}, w_i\cdot (x-y_i) \rangle \in \Z \}
\end{equation}
is the root system of the reductive group $L^{Q,n}_{w_i\cdot (x-y_i)}$ associated with $P^{Q,n}_{w_i\cdot (x-y_i)}$ and thus underlies the support of the Hecke algebra $\mathcal{H}(I_{Q,n}\backslash P^{Q,n}_{w_i\cdot (x-y_i)}/I_{Q,n})$ via (the $G_{Q,n}$-version of) $\mathcal{P}$ in \eqref{E:P}. Here $\Phi^{Q,n}_{w_i\cdot (x-y_i)}$ is a psuedo-Levi subsystem of the root system $\Phi_{Q,n}$, i.e., it corresponds to a subdiagram of the extended Dynkin diagram of $\Phi_{Q,n}$. Similarly,
$$\Phi^{Q,n}_{x,\chi_i}=\{\alpha \in \Phi: \ \langle \alpha/n_{\alpha},x-y_i \rangle \in \Z\}$$ is the root system that underlies the support of $ \mathcal{H}((\ol{I},\chi_i^{-1} )\backslash \ol{P}_x/(\ol{I},\chi_i^{-1} ))$ via $\mca{P}$ in \eqref{E:P}. Also, $\Phi^{Q,n}_{x, \chi_i}$ corresponds canonically bijective to 
$$\set{\alpha/n_\alpha: \ \alpha \in \Phi^{Q,n}_{x, \chi_i}},$$
which is a pseudo-Levi subsystem of $\Phi_{Q,n}$. On the other hand, when $x \in \mca{C}$ is hyperspecial, it follows from \cite[Lemma 4.5]{KOW} that $\Phi^{Q,n}_{x,\chi_i}$ itself is actually a dual-pseudo-Levi subsystem of $\Phi$, i.e., its dual
$$\Phi^{Q,n,\vee}_{x,\chi_i}:=\set{\alpha^\vee: \ \alpha \in \Phi^{Q,n}_{x,\chi_i}} \subseteq \Phi^\vee$$
is a pseudo-Levi subsystem of $\Phi^\vee$. In general, $\Phi^{Q,n,\vee}_{x,\chi_i}$ is a pseudo-Levi subsystem of $\Phi_x^\vee$, which could be shown from the observation at the beginning of \S\ref{SS:Pf-cla}.

% \textcolor{red}{(???)}.
 %\textcolor{red}{(Qn: what does it mean by root system of a pair?)}

There exists an element $ \Theta_{t_i} \in  \mca{H}_{\bar\iota}(\wt{G}, I_1)$ with strongly positive $t_i \in Y$ (as defined in \cite{GGK1}) of the pro-$p$ Iwahori--Hecke algebra such that 
$$\mathcal{H}_i=\Theta_{t_i} \mathcal{H}_0\Theta^{-1}_{t_i}$$
and 
\begin{equation} \label{E:Theta-ti}
\Theta_{t_i} \cdot \pi^{(\overline{I},\chi_0)}=\pi^{(\overline{I},\chi_i)}.
\end{equation}
We consider the automorphism
$$\pmb{\beta}: \mathcal{H}(G_{Q,n}, I_{Q,n}) \to \mathcal{H}(G_{Q,n}, I_{Q,n})$$ 
given by
$$\pmb{\beta}(T):= \Psi_i(\Theta_{t_i}(\Psi_0^{-1}(T))\Theta^{-1}_{t_i})$$
and also the automorphism 
$$\pmb{\beta}^\dag: \mathcal{H}(G_{Q,n}/I_{Q,n}) \rightarrow \mathcal{H}(G_{Q,n}/I_{Q,n})$$
given by 
$$
\pmb{\beta}^\dag(T):=T^{-1}_{\widehat{w}_i^{-1}} \cdot \pmb{\beta}(T) \cdot T_{\widehat{w}_i^{-1}},
$$
where  $T_{\widehat{w}_i^{-1}}$ is the element supported on $ \widehat{w}_i^{-1}$.

We summarize various relations of the above Hecke algebras in the following commutative diagram:
\begin{equation} \label{E:keyCD}
\begin{tikzcd}
\mca{H}_0 \ar[r, "\Psi_0"] \ar[d, "{\Theta_{t_i}(\cdot) \Theta_{t_i}^{-1}}"'] & \mca{H}(G_{Q,n}, I_{Q,n}) \ar[d, "{\pmb{\beta}}"'] \ar[rrd, "{\pmb{\beta}^\dag}"]   \\
\mca{H}_i \ar[r, "\Psi_i"] & \mca{H}(G_{Q,n}, I_{Q,n})  \ar[rr, "{T^{-1}_{\widehat{w}_i^{-1}} (\cdot) T_{\widehat{w}_i^{-1}}}"'] & & \mca{H}(G_{Q,n}, I_{Q,n})  \\
\mathcal{H}((\ol{I},\chi_i^{-1} )\backslash \ol{P}_x/(\ol{I},\chi_i^{-1} )) \ar[u, hook] \ar[r, "{\Psi_i}"] &  \mathcal{H}(I_{Q,n}\backslash P^{Q,n}_{w_i\cdot (x-y_i)}/I_{Q,n}) \ar[u, hook]
\end{tikzcd}
\end{equation}

To write down the KLR parameter of $\pi^{(\wt{I}, \chi_i)}$ viewed as a module over $\mca{H}(G_{Q,n}, I_{Q,n})$ via $\Psi_i$, we note that we can further push the action via the inner automorphism 
$$T^{-1}_{\widehat{w}_i^{-1}}(\cdot)T_{\widehat{w}_i^{-1}}: \mca{H}(G_{Q,n}, I_{Q,n}) \to \mca{H}(G_{Q,n}, I_{Q,n}),$$
which does not change the KLR parameter. Further more, in view of \eqref{E:Theta-ti}, it is equivalent to obtain the KLR parameter of $\pi^{(\wt{I}, \chi_0)}$ viewed as a module over  $\mca{H}(G_{Q,n}, I_{Q,n})$ via $\pmb{\beta}^\dag \circ \Psi_0$. Note that $\pi^{(\wt{I}, \chi_0)}$ viewed as a module over $\mca{H}(G_{Q,n}, I_{Q,n})$ has KLR parameter exactly $(s, u, \tau)$. Moreover, $\pmb{\beta}^\dag$
preserves the elements supported on the strongly positive $ t \in Y_{Q,n}$, it then follows from
\cite[Lemma 14.3]{ABPS17} that $\pmb{\beta}^\dag$ fixes the parameters $s, u$ in $(s, u, \tau)$ and may change $\tau$. That is, $\pi^{(\overline{I},\chi_0)} $ viewed as a representation of $ \mathcal{H}(G_{Q,n},I_{Q,n})$ through $\pmb{\beta}^\dag \circ \Psi_0$ has Kazhdan--Lusztig--Reeder parameter 
$$(s,u,\tau_i)$$
for some $ \tau_i$.

We call $s$ positive real if $s\in \wt{T}^\vee(\R_{>0})$; it is termed real in \cite{CMBO21}.
Now we use results from \cite{ABPS17} to show that if $s$ is positive-real, then $\tau_i = \tau$. 
First, given the KLR parameter $(s,u,\tau)$, we consider
$$s^\sharp:= s\cdot \varphi_u(\alpha^\vee(q^{-1/2}))^{-1} \in \wt{T}^\vee,$$
where $\varphi_u: \SL_2(\C) \to \wt{G}^\vee$ is the Jacobson--Morozov homomorphism associated with $u$ and $\alpha^\vee$ is the coroot of $\SL_2(\C)$. One has $u \in Z_{\ol{G}^{\vee}}(s^\sharp)$. Set $M_{s^\sharp}:=Z_{\ol{G}^{\vee}}(s^\sharp)$ and let $M_{s^\sharp}^0 \subseteq M_{s^\sharp}$ be the connected component of $M_{s^\sharp}$. By \cite[Proposition 6.1 and (49)]{ABPS17}, we have a natural embedding 
$$\pi_0(Z_{M_{s^\sharp}^0}(u)) \into A_{s,u}.$$
% Similarly, we have $(s^\sharp, u, (\tau_i)^\sharp)$ associated with $(s, u, \tau_i)$.
For the KLR parameter $(s, u, \tau_i)$ arising as above, the proof of \cite[Lemma 14.3]{ABPS17} in fact shows that 
$$\tau|_{\pi_0(Z_{M_{s^\sharp}^0}(u))}=\tau_i|_{\pi_0(Z_{M_{s^\sharp}^0}(u))}.$$
However, if $s$ is positive real, then $M_{s^\sharp} =M_{s^\sharp}^0$, and by \cite[Proposition 6.1 and (49)]{ABPS17} we have $A_{s,u}=\pi_0(Z_{M_{s^\sharp}^0}(u))$; we get $\tau_i=\tau$ in this case.

 Thus, if $s$ is positive real, then the KLR parameter of $\pi^{(\wt{I}, \chi_i)}$ as $\mca{H}(G_{Q,n},I_{Q,n})$-module via $\Psi_i$ is $(s, u, \tau)$.
 To further restrict to $\mathcal{H}(I_{Q,n}\backslash P^{Q,n}_{w_i\cdot (x-y_i)}/I_{Q,n})$, let
 $W_{Q,n}$ be the Weyl group of $G_{Q,n}$ and $W^{\vee}_{Q,n}$ be the Weyl group of $\ol G^{\vee}$. One has the canonical bijections
$$\vartheta: \Phi \rightarrow \Phi_{Q,n},\quad   \alpha \mapsto \alpha_{Q,n}:=\alpha/n_\alpha$$
and
$$\vartheta^\vee: \Phi \rightarrow \Phi_{Q,n}^{\vee}, \quad   \alpha \mapsto \alpha_{Q,n}^\vee:=n_\alpha \alpha^\vee.$$
They induce isomorphisms 
\begin{equation} \label{E:Widen}
\vartheta: W\rightarrow W_{Q,n} \text{ and } \vartheta^{\vee}: W\rightarrow W^{\vee}_{Q,n}.
\end{equation}
By further abuse of notation, we also denote by $\vartheta: \Irr(W) \rightarrow \Irr(W_{Q,n})$ and $\vartheta^\vee: \Irr(W) \rightarrow \Irr(W^{\vee}_{Q,n})$ the induced bijections. Also note that the map
\begin{equation} \label{E:vt-what}
\vartheta \circ \hat{w}_i = \hat{w}_i \circ \vartheta: \Phi_{x, \chi_i}^{Q,n} \longrightarrow \Phi_{w_i\cdot (x-y_i)}^{Q,n}
\end{equation}
given by $\alpha \mapsto \hat{w}_i \circ \vartheta(\alpha) = \hat{w}_i(\alpha/n_\alpha)$ is a bijection.

Henceforth, we identify the algebras and their modules of the two sides of $\mca{P}$ in \eqref{E:P}.
Let $\pi_{G_{Q,n}}(s, u, \tau) \in \Irr(G_{Q,n})^{I_{Q,n}}$ and let $z$ be in the closure of fundamental alcove associated with $G_{Q,n}$. Let $W_{Q,n,z}:=W(\Phi^{Q,n}_z)$ with $\Phi^{Q,n}_z$ given as in \eqref{E:Phi-Qn-z}.
If $s$ is positive-real, then it follows from \cite[(2.6.5), Lemma 3.0.7 and (3.0.9)]{CMBO21} that
\begin{equation} \label{eq:res}
(\text{AZ}(\pi_{G_{Q,n}}(s,u,\tau))^{I_{Q,n}}|_{\mathcal{H}(I_{Q,n}\backslash P^{Q,n}_{z}/I_{Q,n})})_{q \rightarrow 1}=\text{Res}^{W_{Q,n}}_{W_{Q,n,z}}(\bigoplus_{\rho \in \Irr(W)_{s,u,\tau}} \vartheta(\rho))
\end{equation}
where the set $\Irr(W)_{s,u,\tau} \subseteq \Irr(W)$ satisfies the following:
\begin{enumerate}
\item[$\bullet$] every $\vartheta^\vee(\rho)$ with $\rho \in \Irr(W)_{s,u,\tau}$ is of the form $ E_{u',\eta}:={\rm Spr}_{\wt{G}^\vee}^{-1}(\mca{O}_{u'}, \eta)$ for some $\mathcal{O}_{u^{\prime}} \gest \mathcal{O}_{u}$ and $\eta \in \Irr(A_{\mca{O}_{u'}})$ (note $\wt{G}^\vee = G_{Q,n}^\vee$);
\item[$\bullet$] every $E_{u, \eta}$ with geometric $\eta \in \Irr(A_{\mca{O}_u})$ occurs as some $\vartheta^\vee(\rho), \rho \in \Irr(W)_{s,u,\tau}$ with multiplicity $\dim \Hom_{A_{s, u}}(\tau, \eta)$, where we have $A_{s, u} \into A_{\mca{O}_u}$.
\end{enumerate}
Note that the above two properties regarding $\rho$ (such that it lies in $\Irr(W)_{s, u, \tau}$) are only necessary, and in general it is difficult to determine precisely the set $\Irr(W)_{s,u,\tau}$ that occurs in \eqref{eq:res}.

\begin{prop} \label{main_prop}
Retain the above notation. Assume $p$ is sufficiently large and that $s$ is positive real.  Then for every $\pi(s, u, \tau) \in \Irr_\iota(\wt{G})^I$ one has
\begin{equation*}
{\rm WF}^{\text{\rm geo}}(\text{\rm AZ}(\pi(s,u,\tau)))=
	\mathop{\text{\rm max}}\limits_{x\in \mca{C}\atop 0 \lest i < \val{\msc{X}}}
	\left\{
	\begin{array}{cc}
	\text{\rm Sat}^{\mathbf{G}(\wt{F})}_{\mathbf{L}_x^F(\wt{F})}(\mathcal{O}_\text{\rm Spr}^{\mbf{L}_x^F}(j^{W_x}_{W(\Phi^{Q,n}_{x,\chi_i})}(\sigma^{\rm spe}))): \\
\bullet \ \sigma \in \Irr(W(\Phi_{x, \chi_i}^{Q,n})),\\
\bullet \  \sigma \subseteq {\rm Res}^{W}_{W(\Phi^{Q,n}_{x,\chi_i})}(\bigoplus_{\rho \in \Irr(W)_{s, u, \tau}} \rho \otimes \varepsilon ) 
	\end{array}
	\right\}.
\end{equation*}
\end{prop}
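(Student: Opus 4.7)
The plan is to combine Theorem \ref{T:red} with the Kawanaka wavefront formula \eqref{E:Kaw2} and the restriction formula \eqref{eq:res}, carefully tracking principal-series parameters through the Hecke-algebra isomorphisms in diagram \eqref{E:keyCD}.

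\emph{Step 1 (Reduction via parahorics).} By Theorem \ref{T:red}, the left-hand side equals the maximum of $\text{Sat}^{\mbf{G}(\wt{F})}_{\mbf{L}_x^F(\wt{F})}({}^{\ol{\mbm{F}}_q}\WF(\text{AZ}(\pi)_x^{\rm uni})(\wt{F}))$ over $x \in \mca{C}$. For each such $x$, the discussion in \S \ref{SS:RepIH} shows that every irreducible constituent $\Omega$ of $\text{AZ}(\pi)_x^{\rm uni}$ is a principal series representation of $\wt{L}_x^{\rm uni}$, with $\Omega^{(\wt{B}_x^{\rm uni}, \chi_i)} \ne 0$ for a unique $\chi_i$ in the list \eqref{E:chi-i}. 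Applying \eqref{E:Kaw2} to each such $\Omega$ gives
$${}^{\ol{\mbm{F}}_q}\WF(\Omega) = \mca{O}_{\rm Spr}^{\mbf{L}_x^F}\bigl( j^{W_x}_{W^0(s_i)}(\Omega_{q\to 1}\otimes \varepsilon)^{\rm spe}\bigr),$$
where $s_i \in (L_x^{\rm uni})^\ast$ is the semisimple parameter attached to $\chi_i$. A direct computation of centralizers (using that $\chi_i$ pulls back from $\wt{\mbf{T}}(\mfr{o}_F)/\mbf{T}(\mfr{o}_F)$ and depends only on $y_i \in Y/Y_{Q,n}$) identifies $W^0(s_i)$ with $W(\Phi^{Q,n}_{x, \chi_i})$.

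\emph{Step 2 (Identifying $\Omega_{q\to 1}$).} To determine the $W(\Phi^{Q,n}_{x,\chi_i})$-module $\Omega_{q\to 1}$, we transport $(\text{AZ}(\pi))^{(\ol{I},\chi_i)}$ through diagram \eqref{E:keyCD}. As an $\mca{H}(G_{Q,n}, I_{Q,n})$-module via $\Psi_i$, it corresponds to an Iwahori-spherical representation with Kazhdan--Lusztig--Reeder parameter of the form $(s,u,\tau_i)$, the pair $(s,u)$ being preserved because $\pmb{\beta}^\dag$ fixes the support on strongly positive elements. By \cite[Lemma 14.3]{ABPS17} and the component-group embedding $\pi_0(Z_{M_{s^\sharp}^0}(u)) \into A_{s,u}$ from \cite[Proposition 6.1]{ABPS17}, the positive-real assumption on $s$ forces $A_{s,u} = \pi_0(Z_{M_{s^\sharp}^0}(u))$ and hence $\tau_i = \tau$. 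Applying \eqref{eq:res} to $\text{AZ}(\pi_{G_{Q,n}}(s,u,\tau))$ at $z = w_i \cdot (x - y_i)$ and transporting back through the bijections \eqref{E:vt-what} and \eqref{E:P}, we identify $\Omega_{q\to 1}$ with an irreducible constituent $\sigma'$ of $\text{Res}^W_{W(\Phi^{Q,n}_{x,\chi_i})}\bigl(\bigoplus_{\rho \in \Irr(W)_{s,u,\tau}} \rho\bigr)$. Writing $\sigma := \sigma' \otimes \varepsilon$ and using $\varepsilon \otimes \varepsilon = \mbm{1}$, we have $\sigma \subseteq \text{Res}^W_{W(\Phi^{Q,n}_{x,\chi_i})}(\bigoplus \rho \otimes \varepsilon)$ and $(\Omega_{q\to 1}\otimes \varepsilon)^{\rm spe} = \sigma^{\rm spe}$, so the expression from \eqref{E:Kaw2} matches the summand appearing in the proposition.

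\emph{Main obstacle.} The delicate point is the Hecke-algebra bookkeeping of Step 2: correctly tracking how the module structure transforms under $\Psi_i$ and the inner automorphism $\pmb{\beta}^\dag$, and verifying that the positive-real hypothesis is precisely what is needed for $\tau_i = \tau$, so that the restriction formula \eqref{eq:res} applies with the \emph{same} set $\Irr(W)_{s,u,\tau}$ independent of $i$. Once this is settled, assembling the pieces over all $x \in \mca{C}$ and $0 \lest i < \val{\msc{X}}$ and invoking Theorem \ref{T:red}(ii) — which instructs us to take the maximum of the saturations of the geometric Kawanaka wavefront sets over $\wt{F}$ — yields the formula of the proposition.
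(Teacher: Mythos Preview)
Your proposal is correct and follows essentially the same route as the paper: reduce via Theorem \ref{T:red} and the Kawanaka formula \eqref{E:Kaw2}, identify $W^0(s_i)=W(\Phi^{Q,n}_{x,\chi_i})$, transport $(\text{AZ}(\pi))^{(\ol{I},\chi_i)}$ through diagram \eqref{E:keyCD} using that positive-real $s$ forces $\tau_i=\tau$, apply \eqref{eq:res} at $z=w_i\cdot(x-y_i)$, and pull back via \eqref{E:vt-what} to obtain \eqref{E:kred} before substituting $\sigma=\Omega_{q\to 1}\otimes\varepsilon$. Your identification of the Hecke-algebra bookkeeping and the $\tau_i=\tau$ step as the main obstacle matches the paper's emphasis exactly.
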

\begin{proof}
We write $\pi:=\pi(s, u, \tau)$. By Theorem \ref{T:red}, we only need to compute $^{\ol{\mbm{F}}_q}\text{\rm WF}(\pi^{\rm uni}_x)(\wt{F})$ for each $x \in \mathcal{C}$, for which a specific formula is given in \eqref{E:Kaw2}. That is, we have 
$$
{\rm WF}^{\text{\rm geo}}(\text{\rm AZ}(\pi))=
\max_{\substack{x\in \mca{C}}} 
\left \{
\begin{array}{ccc}
\text{\rm Sat}^{\mathbf{G}(\wt{F})}_{\mathbf{L}_x^F(\wt{F})}(\mathcal{O}_\text{\rm Spr}^{\mbf{L}_x^F}(j^{W_x}_{W_x^0(s_\Omega)}((\Omega_{q\to 1} \otimes \varepsilon)^{\rm spe}))): \\
\bullet \ \Omega \in \Irr(\wt{L}_x^{\rm uni})  \text{ is any irreducible }\\
\quad \text{ constituent of }  {\rm AZ}(\pi)_x^{\rm uni} 
\end{array}
\right \}.
$$
Here $s_\Omega:=s_{\chi_i}$ if $\Omega \subseteq {\rm Ind}_{\wt{B}_x^{\rm uni}}^{\wt{L}_x^{\rm uni}} \chi_i$; and it also gives $W_x^0(s_\Omega) = W(\Phi_{x, \chi_i}^{Q,n})$.
Also, $\Omega_{q\to 1}$ occurs as a constituent of ${\rm AZ}(\pi)^{(\wt{I}, \chi_i)}|_{\mathcal{H}((\overline{I},\chi_i^{-1})\backslash \overline{P}_x/(\overline{I},\chi_i^{-1})) }$.
The discussion above in this subsection shows that, since $s$ is positive-real, then ${\rm AZ}(\pi)^{(\wt{I}, \chi_i)}$ viewed as $\mca{H}(G_{Q,n},I_{Q,n})$-module via $\Psi_i$ has the same KLR parameter as ${\rm AZ}(\pi)$. Moreover, \eqref{eq:res} gives the restriction of this module to $\mathcal{H}(I_{Q,n}\backslash P^{Q,n}_{w_i\cdot (x-y_i)}/I_{Q,n})$, this in turn shows that  
$$
({\rm AZ}(\pi)^{(\wt{I}, \chi_i)}|_{\mathcal{H}((\overline{I},\chi_i^{-1})\backslash \overline{P}_x/(\overline{I},\chi_i^{-1})) })_{q\rightarrow1}  = (\Psi_i^*)^{-1} \circ {\rm Res}^{W_{Q,n}}_{W_{Q,n,w_i\cdot (x-y_i)}}(\bigoplus_{\rho \in \Irr(W)_{s,u,\tau}} \vartheta(\rho)),
$$
where $\Irr(W)_{s, u, \tau}$ is described as above. Here $W_{Q,n} = W(\Phi_{Q,n})$, and $W_{Q,n,w_i\cdot (x-y_i)} = W(\Phi^{Q,n}_{w_i\cdot (x-y_i)})$, and $\Psi_i^*$ is the map induced from $\Psi_i$ in \eqref{iso} and Lusztig's isomorphism.
However, the isomorphism $\Psi_i$, when restricted to the group algebras $\C[W(\Phi_{x, \chi_i}^{Q,n})]$ and $\C[W(\Phi_{w_i\cdot (x-y_i)}^{Q,n})]$ for the two sides of \eqref{iso} via the Lusztig isomorphism, is exactly the one induced from $\vartheta \circ \hat{w}_i$ in \eqref{E:vt-what}. This shows that
\begin{equation} \label{E:kred}
({\rm AZ}(\pi)^{(\wt{I}, \chi_i)}|_{\mathcal{H}((\overline{I},\chi_i^{-1})\backslash \overline{P}_x/(\overline{I},\chi_i^{-1})) } )_{q\rightarrow1}=  {\rm Res}_{W(\Phi_{x, \chi_i}^{Q,n})}^W \bigoplus_{\rho \in \Irr(W)_{s, u, \tau}} \rho
 \end{equation}
Thus, $\Omega_{q\to 1}$ ranges over all irreducible constituents of the right hand side of \eqref{E:kred}. We replace $\Omega_{q\to 1} \otimes \varepsilon$ by $\sigma$, then the desired equality follows immediately, and this completes the proof.
\end{proof}

\subsection{Reduction for the upper bound}

\begin{lm} \label{lm:AA}
Let $E_{u, \eta}:= {\rm Spr}_{\wt{G}^\vee}^{-1}(\mca{O}_u^\vee, \eta)$ and $\sigma \in \Irr(W(\Phi^{Q,n}_{x,\chi_i}))$, where $\mca{O}^\vee_u \in \mca{N}(\wt{G}^\vee)$ is the orbit associated with $u$. If $\sigma$ satisfies that $\vartheta^\vee(\sigma) \subseteq \text{\rm Res}^{\vartheta^{\vee}(W)}_{\vartheta^{\vee}(W(\Phi^{Q,n}_{x,\chi_i}))}(E_{u,\psi}\otimes \varepsilon)$, then 
$$\mathcal{O}_{\text{\rm Spr}}(j^{\vartheta^{\vee}(W)}_{\vartheta^{\vee}(W(\Phi^{Q,n}_{x,\chi_i}))}(\vartheta^\vee(\sigma) \otimes \varepsilon)^{\text{\rm spe}}) \gest \mca{O}^\vee_u.$$
\end{lm}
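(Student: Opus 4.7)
The plan is to rewrite the hypothesis via Frobenius reciprocity, translate the resulting occurrence into a $b$-function inequality, and then use the generalized Springer correspondence together with the theory of Lusztig families. To lighten notation, write $\mca{W} := \vartheta^\vee(W)$, $\mca{W}' := \vartheta^\vee(W(\Phi^{Q,n}_{x,\chi_i}))$, and $\rho' := \vartheta^\vee(\sigma)\otimes\varepsilon$. First, Frobenius reciprocity applied to $\vartheta^\vee(\sigma) \subseteq \Res^{\mca{W}}_{\mca{W}'}(E_{u,\eta}\otimes\varepsilon)$, combined with the identity $\varepsilon_{\mca{W}}|_{\mca{W}'} = \varepsilon_{\mca{W}'}$, gives the occurrence $E_{u,\eta} \subseteq \Ind^{\mca{W}}_{\mca{W}'}(\rho')$.

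From here I would extract a degree inequality. By the defining realization of $j$-induction, $j^{\mca{W}}_{\mca{W}'}(\rho')$ sits inside ${\rm Sym}^e V$ for $e = b^{\mca{W}'}(\rho')$, and the $\mca{W}'$-module decomposition $V = V^{\mca{W}'}\oplus V'$ together with ${\rm Sym}^d V|_{\mca{W}'} = \bigoplus_{i+j=d} {\rm Sym}^i V' \otimes {\rm Sym}^j V^{\mca{W}'}$ forces every $\mca{W}$-constituent $F$ of $\Ind^{\mca{W}}_{\mca{W}'}(\rho')$ to satisfy $b^{\mca{W}}(F) \gest e$. Specialising to $F = E_{u,\eta}$ and invoking the Springer-theoretic identity $b(E_{v,\psi}) = \dim \mfr{B}_v$ together with $2\dim \mfr{B}_v = \dim \wt{G}^\vee - \text{rank}(\wt{G}^\vee) - \dim \mca{O}_v$ yields the dimension comparison $\dim \mca{O}_u^\vee \lest \dim \mca{O}_{{\rm Spr}}(j^{\mca{W}}_{\mca{W}'}(\rho'))$. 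To upgrade this to a closure-order inequality, I would invoke Lusztig's generalized Springer correspondence for pseudo-Levi subsystems, which identifies $j^{\mca{W}}_{\mca{W}'}(\rho')$ with a specific Springer representation $E_{u'',\eta''}$ of $\mca{W}$ and forces the orbit of every Springer representation occurring in $\Ind^{\mca{W}}_{\mca{W}'}(\rho')$ to lie in $\overline{\mca{O}_{u''}^\vee}$.

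For the last step I would pass to the special representative. Within any Lusztig family every element has the same $a$-value, and only the special one achieves $a=b$; hence $b((j^{\mca{W}}_{\mca{W}'}(\rho'))^{\rm spe}) \lest b(j^{\mca{W}}_{\mca{W}'}(\rho'))$, and Lusztig's theory of two-sided cells converts this into the closure-order inequality $\mca{O}_{{\rm Spr}}(j^{\mca{W}}_{\mca{W}'}(\rho')) \lest \mca{O}_{{\rm Spr}}((j^{\mca{W}}_{\mca{W}'}(\rho'))^{\rm spe})$. Chaining yields $\mca{O}_u^\vee \lest \mca{O}_{{\rm Spr}}((j^{\mca{W}}_{\mca{W}'}(\rho'))^{\rm spe})$, as required. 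The main obstacle is that $\vartheta^\vee(\Phi^{Q,n}_{x,\chi_i})$ is only a pseudo-Levi subsystem of $\Phi^\vee$, so the cleanest Lusztig--Spaltenstein identity $j^W_{W_L}(E^L_{\mca{O}_L,\mbm{1}}) = E^G_{\Ind\,\mca{O}_L,\mbm{1}}$ for genuine Levis is not directly applicable; the step passing from the $b$-function inequality to the closure-order one must go through the generalized Springer correspondence, where the analogous statement is available but requires careful bookkeeping of the cuspidal data.
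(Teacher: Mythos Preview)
Your outline is headed in the right direction but is considerably more elaborate than what the paper does, and the crucial step you flag as the ``main obstacle'' is in fact the entire content of the lemma. The paper's proof is two lines: it observes that $\{\alpha_{Q,n}:\alpha\in\Phi^{Q,n}_{x,\chi_i}\}$ is a pseudo-Levi subsystem of $\Phi_{Q,n}$, so that $\vartheta^\vee(W(\Phi^{Q,n}_{x,\chi_i}))$ is the Weyl group of a dual-pseudo-Levi subsystem of $\Phi_{Q,n}^\vee$, and then invokes \cite[Proposition 4.3]{AcAu07} as a black box. That proposition of Achar--Aubert says precisely that for such a subgroup $\mca{W}'$, one has $\mca{O}_{\rm Spr}\bigl((j^{\mca{W}}_{\mca{W}'}\rho')^{\rm spe}\bigr)\gest \mca{O}_{\rm Spr}\bigl(\Ind^{\mca{W}}_{\mca{W}'}\rho'\bigr)$; combined with your (correct) step~1 via Frobenius reciprocity, this immediately gives $\mca{O}_u^\vee\lest \mca{O}_{\rm Spr}\bigl((j^{\mca{W}}_{\mca{W}'}\rho')^{\rm spe}\bigr)$.

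Your steps 2--3 are fine but only produce a \emph{dimension} inequality, and your steps 4--5 attempt to upgrade this to closure order. The difficulty you identify is real: $b$-function comparisons alone do not yield closure-order comparisons of Springer orbits. Your appeal in step~4 to the ``generalized Springer correspondence for pseudo-Levi subsystems'' is not quite the right tool---what is actually needed is Lusztig's theory of character sheaves and their supports, which is exactly what Achar--Aubert package into their Proposition~4.3. Likewise in step~5, the inequality $\mca{O}_{\rm Spr}(\sigma)\lest\mca{O}_{\rm Spr}(\sigma^{\rm spe})$ is true, but it follows from the theory of special pieces (the special orbit is the open orbit of its piece), not from the $a=b$ characterisation alone. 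So your route can be completed, but doing so amounts to reproving Achar--Aubert's result; citing it directly, as the paper does, is the cleaner path.
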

\begin{proof}
As mentioned in \S \ref{SS:RepIH}, the set $\set{\alpha_{Q,n}: \alpha \in \Phi^{Q,n}_{x,\chi_i}}$ is a pseudo-Levi subsystem of $\Phi_{Q,n}$, we see that $\vartheta^\vee(W(\Phi^{Q,n}_{x,\chi_i}))$ is the Weyl group of the dual-pseudo-Levi subsystem 
$$\set{\alpha_{Q,n}^\vee: \alpha \in \Phi^{Q,n}_{x,\chi_i}}$$
of $\Phi_{Q,n}^\vee$ (and thus of the group $G_{Q,n}^\vee \simeq \wt{G}^\vee$). The result then follows from \cite[Proposition 4.3]{AcAu07}.
\end{proof}

Let $\mca{O}^\vee \in \mca{N}(\wt{G}^\vee)$. Motivated from Lemma \ref{lm:AA} (or rather, from Corollary \ref{C:key} later), for every fixed $x \in  \mca{C}$ and $0\lest i \lest k$, we define 
\begin{equation} \label{E:Dxchi}
\mca{D}(\mca{O}^\vee)_{x, \chi_i}:=\max
\left\{
	\begin{array}{cc}
	\text{\rm Sat}^{\mathbf{G}(\wt{F})}_{\mathbf{L}_x^F(\wt{F})}(\mathcal{O}_\text{\rm Spr}^{\mbf{L}_x^F}(j^{W_x}_{W(\Phi^{Q,n}_{x,\chi_i})}(\sigma^{\rm spe}))): \\
\bullet \ \sigma \in \Irr(W(\Phi_{x, \chi_i}^{Q,n})),\\
\bullet \  \mathcal{O}_{\text{\rm Spr}}(j^{\vartheta^{\vee}(W)}_{\vartheta^{\vee}(W(\Phi^{Q,n}_{x,\chi_i}))}(\vartheta^{\vee}(\sigma) \otimes \varepsilon)^{\text{\rm spe}}) \gest \mca{O}^\vee 
	\end{array}
	\right\}.
\end{equation}
For every $x \in \mca{C}$, we also set
\begin{equation} \label{E:Dx}
\mca{D}(\mca{O}^\vee)_x: = \max \bigcup_{0\lest i < \val{\msc{X}}} \mca{D}(\mca{O}^\vee)_{x,\chi_i}
\end{equation}
and define
\begin{equation} \label{E:D}
\mca{D}(\mca{O}^\vee):= \max \bigcup_{x\in \mca{C}} \mca{D}(\mca{O}^\vee)_x.
\end{equation}
We also define
\begin{equation} \label{D:Dflat}
\mca{D}^\flat(\mca{O}^\vee):=\max_{\substack{ x\in \mca{C} \\ 0\lest i < \val{\msc{X}}} }
\left\{
	\begin{array}{cc}
	\text{\rm Sat}^{\mathbf{G}(\wt{F})}_{\mathbf{L}_x^F(\wt{F})}(\mathcal{O}_\text{\rm Spr}^{\mbf{L}_x^F}(j^{W_x}_{W(\Phi^{Q,n}_{x,\chi_i})}(\sigma^{\rm spe}))): \\
\bullet \ \sigma \in \Irr(W(\Phi_{x, \chi_i}^{Q,n})),\\
\bullet \  \mathcal{O}_{\text{\rm Spr}}(j^{\vartheta^{\vee}(W)}_{\vartheta^{\vee}(W(\Phi^{Q,n}_{x,\chi_i}))}(\vartheta^{\vee}(\sigma) \otimes \varepsilon)^{\text{\rm spe}}) = \mca{O}^\vee 
	\end{array}
	\right\}
\end{equation}

\begin{thm} \label{T:main}
For every $\mca{O}^\vee \in \mca{N}(\wt{G}^\vee)$, one has
$\mca{D}(\mca{O}^\vee) \lest d^{(n)}_{BV, G}(\mca{O}^\vee)$. Moreover, if $G$ is of type $A$ or exceptional type, then the equality holds.
\end{thm}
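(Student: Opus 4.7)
The plan is to establish the upper bound $\mca{D}(\mca{O}^\vee) \lest d_{BV,G}^{(n)}(\mca{O}^\vee)$ by inspecting each triple $(x, i, \sigma)$ contributing to the maximum in \eqref{E:D}, and then to exhibit specific triples achieving equality in types $A$ and exceptional. To set up, I would first identify $\vartheta^\vee(W(\Phi_{x,\chi_i}^{Q,n}))$ as the Weyl group of a pseudo-Levi subsystem of $\Phi_{Q,n}^\vee$ (and hence of $\wt{G}^\vee$). The Springer condition on $\sigma$ then says, by the Achar--Aubert type result as in Lemma \ref{lm:AA}, that $\mca{O}^\vee$ is dominated by the Springer image of a $j$-induction from this pseudo-Levi, and $\vartheta^\vee(\sigma)\otimes\varepsilon$ comes from a Springer preimage of such a dominating orbit. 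One then must compare this upper-bound data against the $\mbf{G}$-side operation $\text{Sat}^{\mathbf{G}(\wt{F})}_{\mathbf{L}_x^F(\wt{F})}(\mca{O}_{\rm Spr}^{\mbf{L}_x^F}(j^{W_x}_{W(\Phi^{Q,n}_{x,\chi_i})}(\sigma^{\rm spe})))$ and bound it by $d_{BV,G}^{(n)}(\mca{O}^\vee)$.

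For the upper bound, the verification proceeds type-by-type. In classical types, the pseudo-Levis, saturations, Springer correspondence, and $j$-induction all admit explicit descriptions in terms of partitions (collapses, the $(\cdot)^{\pm}$ operations, and induction of orbits), while the explicit formulas \eqref{E:dBV-A}--\eqref{E:dBV-D} are built from these same ingredients via the combinatorial duality $d_{com}^{(\nn)}$. The ``deformation'' structure of $d_{BV,G}^{(n)}$ noted in \S\ref{SS:dBVn} should then reduce the inequality to a combinatorial comparison between a Spaltenstein-type dual and the partition operations produced by the saturated $j$-induction, which is a direct partition-level check once the $\pm$ and collapse operations are tracked carefully.

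For equality in type $A$: given a partition $\mfr{p}=(p_1,\ldots,p_k)$ of $r+1$, one has $d_{BV,G}^{(n)}(\mfr{p}) = \sum_i \mfr{s}(p_i;\nn)$. Since every pseudo-Levi in type $A$ is a Levi, and since the characters $\chi_i$ indexed by $\msc{X}=Y/Y_{Q,n}$ parametrize different sub-Levis inside a hyperspecial parahoric, one would choose $x$ hyperspecial and an index $i$ so that $\Phi_{x,\chi_i}^{Q,n}$ cuts out the Levi of $G_{Q,n}$ of block shape determined by $\mfr{p}$ with block sizes matching the $\nn$-collapse of $p_i$. Taking $\sigma$ trivial makes $\sigma^{\rm spe}=\sigma$; the $j$-induction of the trivial representation Springers to the regular orbit of the Levi on the $\mbf{G}$-side, and the saturation then produces exactly $d_{com}^{(\nn)}(\mfr{p})$ by the standard type-$A$ induction-of-orbits formula. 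The dual-side Springer condition reduces to a dominance check between the dual Levi's regular orbit and $\mfr{p}$, which holds by the construction of the blocks.

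For equality in exceptional types, I would work case-by-case against the explicit tables of $d_{BV,G}^{(n)}$ in the Appendix of \cite{GLLS}. For each $\mca{O}^\vee$ and its image $d_{BV,G}^{(n)}(\mca{O}^\vee)$, one seeks a triple $(x, i, \sigma)$ whose saturated $j$-induced Springer image realizes $d_{BV,G}^{(n)}(\mca{O}^\vee)$. The main obstacle lies precisely here: in exceptional types, $d_{BV,G}^{(n)}$ is defined via an intricate procedure involving special pieces and a tabulated duality, so matching it against what the saturation of a $j$-induced Springer orbit from a pseudo-Levi subsystem of $\Phi_{Q,n}^\vee$ can produce must be verified by enumeration over all choices of hyperspecial $x$, twist $\chi_i$, and the resulting value of $\nn$ relative to $n$ as discussed in \S\ref{SS:dBV-exc}. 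Once the relevant subdiagrams of the extended Dynkin diagrams and their Springer data are tabulated, the verification becomes mechanical but lengthy, and I expect it to rely on CHEVIE-style computations.
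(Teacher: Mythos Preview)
Your high-level strategy---type-by-type partition combinatorics for classical types, computer enumeration for exceptional---matches the paper. But two concrete ingredients are missing from your upper-bound plan, and your type $A$ equality construction contains an error.

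For the upper bound, the crux is not just that everything can be written in partitions, but a structural constraint on $\Phi_{x,\chi_i}^{Q,n}$: the number of its irreducible components is bounded in terms of $\nn$ (for type $A$, at most $\nn$ components; for $B,C,D$, bounds of the form $2\val{\msc{A}}+\val{\msc{E}}\lest \nn$ or $\nn/2$). This is Lemma~\ref{L:Phixchi} in the paper, and it is precisely what forces the $j$-induction output, via Lemma~\ref{lm:Ar}, to be dominated by $d_{com}^{(\nn)}$ rather than by some $d_{com}^{(z)}$ with larger $z$. Without this bound your ``direct partition-level check'' has no leverage. You also do not address non-hyperspecial $x\in\mca{C}$ in types $B,C,D$; the paper handles this by observing that $\Phi_{x,\chi_i}^{Q,n}$ decomposes along the irreducible components of $\Phi_x$, each of which reduces to a hyperspecial computation (beginning of \S\ref{SS:Pf-cla}).

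For type $A$ equality, taking $\sigma$ trivial is wrong: $j^{W_x}_{W'}(\mbm{1})=\mbm{1}$, which Springers to the regular orbit of $\mbf{L}_x^F=\mbf{G}$, not to $d_{com}^{(\nn)}(\mfr{p})$. The paper instead shows $\mca{D}(\mca{O}_\mfr{p}^\vee)=\max\{d_{com}^{(\nn)}(\mfr{p}'):\mfr{p}'\gest\mfr{p}\}$ by combining Proposition~\ref{P:j-induction}(i) with Lemma~\ref{lm:Ar}(i), and then uses that $d_{com}^{(\nn)}$ is order-reversing. If you want to exhibit an explicit achieving triple, you must take $\sigma=\boxtimes_i E_{\lambda_i}$ where $\lambda_1\cup\cdots\cup\lambda_{\nn}=\mfr{p}^\top$ realizes the maximum in Lemma~\ref{lm:Ar}(i); neither $\sigma=\mbm{1}$ nor $\sigma=\varepsilon$ works in general.
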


We will prove Theorem \ref{T:main} for classical types from  a case-by-case analysis in \S \ref{SS:Pf-cla}, based on some combinatorial preparation given in \S \ref{SS:comb}. We treat exceptional groups in \S \ref{SS:exc}. In fact, a small reduction is given below.

\begin{rmk} \label{R:minPhi}
Suppose we have $\Phi^{Q,n}_{x,\chi_i} \subseteq \Phi^{Q,n}_{x,\chi_j}$ for some $\chi_i, \chi_j, 0\lest i, j \lest k$. Let $\sigma \in \Irr(W(\Phi^{Q,n}_{x,\chi_j}))$ and consider an irreducible $\sigma_0 \subseteq \sigma^{\text{spe}}|_{W(\Phi^{Q,n}_{x,\chi_i})}$. Assume that $ \Phi^{Q,n}_{x,\chi_j}$ does not contain $E_7$ or $E_8$. Then $\vartheta^\vee(\sigma)^{\text{spe}}\otimes \varepsilon=(\vartheta^\vee(\sigma)\otimes \varepsilon)^{\text{spe}}$. For a representation $\Pi$ of a Weyl group, we write $\mca{O}_{\rm Spr}(\Pi):=\max\set{\mca{O}_{\rm Spr}(\pi): \pi \text{ irred. and } \pi \subseteq \Pi}$.
Thus, if $\mathcal{O}_{\text{\rm Spr}}(j^{\vartheta^{\vee}(W)}_{\vartheta^{\vee}(W(\Phi^{Q,n}_{x,\chi_j}))}(\vartheta^\vee(\sigma) \otimes \varepsilon)^{\text{\rm spe}}) \gest \mca{O}^\vee$ with $\mca{O}^\vee \in \mca{N}(\wt{G}^\vee)$, then by \cite[Proposition 4.3]{AcAu07} we have 
\begin{equation}
\mathcal{O}_{\text{Spr}}(j^{\vartheta^{\vee}(W)}_{\vartheta^{\vee}(W(\Phi^{Q,n}_{x,\chi_i}))}(\vartheta^\vee(\sigma_0) \otimes \varepsilon)^{\text{spe}})  \gest \mathcal{O}_{\text{Spr}}(\text{Ind}^{\vartheta^{\vee}(W)}_{\vartheta^{\vee}(W(\Phi^{Q,n}_{x,\chi_i}))}(\vartheta^\vee(\sigma_0) \otimes \varepsilon))  \gest \mca{O}^\vee.
\end{equation}
On the other hand, we also have 
\begin{equation}
\mathcal{O}_{\text{Spr}}(j^{W_x}_{W(\Phi^{Q,n}_{x,\chi_i})}\sigma_0^{\text{spe}}) \gest \mathcal{O}_{\text{Spr}}(\text{Ind}^{W_x}_{W(\Phi^{Q,n}_{x,\chi_i})}\sigma_0)\gest \mathcal{O}_{\text{Spr}}(j^{W_x}_{W(\Phi^{Q,n}_{x,\chi_j})}\sigma^{\text{spe}}).
\end{equation}
Hence, in view of \eqref{E:D}, to compute $\mca{D}(\mca{O}^\vee)$, we only need to consider those minimal $\Phi^{Q,n}_{x,\chi_i}$ and also the ones containing $E_7$ or $E_8$. 

Moreover, if the image of $d^{(n)}_{BV,G}$ is always the regular orbit (this is true if $n$ is larger than a specific constant determined by $\mbf{G}$), it is obvious that Theorem \ref{T:main} is true. Thus, for fixed $G$, there is only a finite amount of computation needed, and in particular it is so for all the exceptional groups.
\end{rmk}

\subsection{Some combinatorics} \label{SS:comb}
In the remaining of this section, we assume that the root system $\Phi$ of $\mbf{G}$ is irreducible.

First, we describe $\Phi_{x, \chi_i}^{Q,n}$ for hyperspecial  $x\in \mca{C}$ for each classical type. To state our result, we fix some convention here, by referring to \S \ref{SS:dBVn} for the labelling of the  extended Dynkin diagram of type $B_r,C_r$ and $D_r$. Thus, by $D_2$, we mean the subsystem given by nodes $\set{\alpha_0, \alpha_1}$ in type $B_r$ or by $\set{\alpha_0,\alpha_1}$, $\set{\alpha_{r-1}, \alpha_r}$ in type $D_r$. By $D_3$, we mean the subsystem given by the nodes $\set{\alpha_0,\alpha_1,\alpha_2}$ in type $B_r$ or $\set{\alpha_0,\alpha_1,\alpha_2}$, $\set{\alpha_{r-2}, \alpha_{r-1}, \alpha_r}$ in type $D_r$. By $B_1$ or $C_1$, we mean the subsystem given by the node $\set{\alpha_r}$ in type $B_r$ or by $\set{\alpha_0}$, $\set{\alpha_r}$ in type $C_r$. Also, for every root system $\Phi'$ we set
$$\msc{A}(\Phi'):=\set{\text{irreducible components in $\Phi'$ of type $A$}}$$
and 
$$\msc{E}(\Phi'):=\set{\text{irreducible components in $\Phi'$ not of type $A$}}.$$

\begin{lm} \label{L:Phixchi} 
For $G$ of classical type and hyperspecial $x\in \mca{C}$, the root subsystem $\Phi_{x,\chi_i}^{Q,n}$ is given as follows.
\begin{enumerate}
    \item[(i)] For $G$ of type $A_r$ we have
    $$\Phi^{Q,n}_{x,\chi_i} \simeq A_{r_1-1}+ A_{r_2-1}+...+ A_{r_k-1}$$
with $ \sum_{i=1}^{k}r_i=r+1, r_i \gest 1$ and $k=\val{\msc{A}(\Phi_{x, \chi_i}^{Q,n})} \lest \nn$. Moreover, every such root subsystem $\sum_{j=1}^k A_{r_j-1}$ is equal to $\Phi_{x, \chi_i}^{Q,n}$ for some $x$ and $\chi_i$.
    \item[(ii)] For $G$ of type $B_r$ and $\nn$ odd, we have
    $$\Phi^{Q,n}_{x,\chi_i} \simeq A_{r_1-1}+ A_{r_2-1}+...+ A_{r_{k-1}-1} + B_{r_k}$$ with $\sum_{i=1}^{k}r_i=r$ and $2\cdot \val{\msc{A}(\Phi_{x, \chi_i}^{Q,n})} + \val{\msc{E}(\Phi_{x, \chi_i}^{Q,n})} \lest \nn$.
For $\nn$ even we have 
$$\Phi^{Q,n}_{x,\chi_i} \simeq B_{r_1}+ A_{r_2-1}+...+ A_{r_{k-1}-1} + B_{r_k}$$ with $ \sum_{i=1}^{k} r_i=r$ and $2\cdot \val{\msc{A}(\Phi_{x, \chi_i}^{Q,n})} + \val{\msc{E}(\Phi_{x, \chi_i}^{Q,n})}\lest \nn$. 
    \item[(iii)] For $G$ of type $C_r$ and $\nn$ odd, we have
    $$\Phi^{Q,n}_{x,\chi_i} \simeq A_{r_1-1}+ A_{r_2-1}+...+ A_{r_{k-1}-1} + C_{r_k}$$ with $ \sum_{i=1}^{k} r_i=r$ and $2\cdot \val{\msc{A}(\Phi_{x, \chi_i}^{Q,n})} + \val{\msc{E}(\Phi_{x, \chi_i}^{Q,n})}\lest \nn$. For $\nn$ even we have
    $$\Phi^{Q,n}_{x,\chi_i} \simeq D_{r_1}+ A_{r_2-1}+...+ A_{r_{k-1}-1} + C_{r_k}$$ with $\sum_{i=1}^{k}r_i=r$ and $2\cdot \val{\msc{A}(\Phi_{x, \chi_i}^{Q,n})} + \val{\msc{E}(\Phi_{x, \chi_i}^{Q,n})}\lest \nn/2$.
    \item[(iv)] For $G$ of type $D_r$ we have 
    $$\Phi^{Q,n}_{x,\chi_i} \simeq D_{r_1}+ A_{r_2-1}+...+ A_{r_{k-1}-1}+ D_{r_k}$$
    with $ \sum_{i=1}^{k} r_i=r$ and $2\cdot \val{\msc{A}(\Phi_{x, \chi_i}^{Q,n})} + \val{\msc{E}(\Phi_{x, \chi_i}^{Q,n})}\lest \nn$.
\end{enumerate}
\end{lm}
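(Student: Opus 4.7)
The plan is to use [KOW, Lemma 4.5] to reduce to a Borel--de Siebenthal classification of pseudo-Levi sub-diagrams of $\widetilde{\Phi^\vee}$, then dualize back and impose the bound on marks.

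By [KOW, Lemma 4.5] and the paragraph just preceding the statement, for hyperspecial $x$ the set $\Phi^{Q,n,\vee}_{x,\chi_i}\subseteq \Phi^\vee$ is a pseudo-Levi subsystem, hence corresponds to a sub-diagram of the extended Dynkin diagram $\widetilde{\Phi^\vee}$ obtained by removing at least one node. I will enumerate these sub-diagrams in each of the four classical cases --- keeping in mind that in types $B_r$ and $C_r$ the type of $\Phi^\vee$ toggles with the parity of $\nn$, as recorded in the examples above --- and then dualize back via $A_k\leftrightarrow A_k$, $B_k\leftrightarrow C_k$, $D_k\leftrightarrow D_k$. This produces the shapes in (i)--(iv): $\widetilde{A_r}$ is a cycle with all marks $1$, so arbitrary removals produce disjoint unions of $A$-arcs, giving (i); $\widetilde{C_r}$ and $\widetilde{B_r}$ each have mark-$1$ boundary nodes and mark-$2$ interior nodes, from which (ii) and (iii) drop out after dualizing (with the fork at the affine end of $\widetilde{B_r}$ accounting for the $D_{r_1}$-component in (iii) when $\nn$ is even, since that fork is self-dual under $B\leftrightarrow C$); and $\widetilde{D_r}$ forks at both ends, giving the two $D$-pieces in (iv).

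For the bounds on $\val{\msc{A}}$ and $\val{\msc{E}}$, I will use a Kac-coordinate analysis. Writing $z = x - y_i$ with Kac coordinates $(s_0,\dots,s_r)$ relative to $\widetilde{\Phi^\vee}$ satisfying $\sum_j c_j s_j = 1$, and noting that for hyperspecial $x$ the effective torsion order of $z$ in the quotient $Y/Y_{Q,n}$ is bounded by $\nn$ (respectively $\nn/2$ in type $C_r$ with $\nn$ even, reflecting the halving of the covering degree on the long coroot), the sum of marks over those $j$ with $s_j\neq 0$ is bounded by $\nn$ (respectively $\nn/2$). Since each type-$A$ component is carved out of $\widetilde{\Phi^\vee}$ by removing two mark-$1$ boundary nodes (cost $2$) while each non-$A$ component requires only one mark-$1$ end-removal (cost $1$), this translates into $2\val{\msc{A}} + \val{\msc{E}} \lest \nn$ in types $B_r, D_r$ and in $C_r$ with $\nn$ odd, and its halved version in $C_r$ with $\nn$ even, while in type $A_r$ all marks are $1$ and we obtain $\val{\msc{A}} \lest \nn$. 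The realizability statement in (i) (and its counterparts in (ii)--(iv)) follows by reversing this construction: given any pattern satisfying the stated inequality, an explicit $y_i \in Y$ producing the prescribed Kac coordinates is exhibited by a routine lattice surjectivity argument in $Y/Y_{Q,n}$.

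The main technical obstacle will be the parity dichotomy in types $B_r$ and $C_r$: since the dual-group type, the marks on $\widetilde{\Phi^\vee}$, and the halving of the bound all toggle with the parity of $\nn$, one must carefully verify that the dualization back to $\Phi$ yields the correct list of components in each sub-case --- in particular the appearance of the unexpected $D_{r_1}$-piece in (iii) when $\nn$ is even, which arises from the self-dual affine-end fork of $\widetilde{B_r}$ surviving through the $B\leftrightarrow C$ duality --- and this requires consistent tracking of root lengths through the entire correspondence.
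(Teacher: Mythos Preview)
Your argument has a genuine gap: the root system $\Phi^\vee$ does \emph{not} depend on $n$ or $\nn$ --- it is simply the dual of $\Phi$, so for $G$ of type $B_r$ it is always of type $C_r$, and for $G$ of type $C_r$ it is always of type $B_r$. What toggles with the parity of $\nn$ is the type of $\Phi_{Q,n}$ (equivalently, of $\overline{G}^\vee$). Consequently, the constraint from \cite[Lemma 4.5]{KOW} alone --- that $\Phi^{Q,n,\vee}_{x,\chi_i}$ is a pseudo-Levi subsystem of $\Phi^\vee$ --- cannot distinguish the $\nn$ odd and $\nn$ even cases, and by itself only yields the more permissive of the two shapes in (ii) and (iii). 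For example, in type $B_r$ it gives $B_{r_1} + A_{r_2-1} + \cdots + B_{r_k}$ for all $\nn$, and you have no mechanism to exclude the second $B$-piece when $\nn$ is odd.

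The paper's proof supplies the missing ingredient: in addition to the dual-pseudo-Levi constraint, one also uses that $\vartheta(\Phi^{Q,n}_{x,\chi_i})$ is a pseudo-Levi subsystem of $\Phi_{Q,n}$ (recorded in \S \ref{SS:RepIH}). Since the type of $\Phi_{Q,n}$ genuinely depends on the parity of $\nn$, intersecting the two constraints forces the listed shapes. The same issue propagates into your bounds argument: the restriction on how many nodes may be deleted, as worked out in \cite[\S 4]{KOW}, is phrased for the extended Dynkin diagram of $\vartheta(\Phi)=\Phi_{Q,n}$, not of $\Phi^\vee$, so your Kac-coordinate count on $\widetilde{\Phi^\vee}$ is applied to the wrong diagram. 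Your cost accounting (``each type-$A$ component costs $2$ by removing two mark-$1$ boundary nodes'') is also not correct as stated --- interior $A$-pieces are carved out by removing interior nodes, whose marks are generally $2$, not $1$.
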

\begin{proof}
First, as already mentioned in \S \ref{SS:RepIH}, it follows from \cite[Lemma 4.5]{KOW} that $\Phi_{x, \chi_i}^{Q,n}$ is a dual pseudo-Levi subsystem of $\Phi$. 
At the same time, $\vartheta(\Phi_{x,\chi_i}^{Q,n})$ is a pseudo-Levi subsystem of $\vartheta(\Phi)=\Phi_{Q,n}$. This shows that $\Phi_{x, \chi_i}^{Q,n}$ has to be of such a form in all cases (i)--(iv). 
%The restriction on the number of components follows from the discussions in \cite[\S 4]{KOW}.

Since $x$ is a hyperspecial point, the discussion in \cite[\S 4]{KOW} gives restrictions on the number of nodes that one can omit from the extended Dynkin diagram of $\vartheta(\Phi)$, in order to obtain the pseudo-Levi subsystem $\vartheta(\Phi_{x, \chi_i}^{Q,n})$. For example, for type $A_r$ one can remove at most $\nn$-many points from the extended Dynkin diagram of $\vartheta(\Phi)$. This gives the upper bound $\nn$ for the number of components $k$ in $\Phi_{x,\chi_i}^{Q,n}$. For other classical types, it follows from analogous argument, as elaborated in \cite[\S 4]{KOW}.
\end{proof}

In the following proposition, we record some results regarding certain representations of $W$ arising from the truncated induction. For a partition $\lambda$ of type $X \in \set{A, B, C, D}$, if $\lambda$ is not a very even partition (only of type $D$), then as before we set $E_\lambda:={\rm Spr}^{-1}(\lambda, \mbm{1}) \in \Irr(W_X)$. If $\lambda$ is very even in the case of type $D$, we also use $E_{\lambda}$ to denote either choice of the representation associated with $\lambda$; this suffices for our later purpose.

\begin{prop} \label{P:j-induction}
Let $W^{\prime} \subseteq W$ be the Weyl group of a pseudo Levi subsystem of $\Phi$.
\begin{enumerate}
    \item[(i)] Let $W$ be of type $A_r$ and $W^{\prime} \subseteq W $ be of type $A_{r_1-1}+ A_{r_2-1}+...+ A_{r_k-1}$ with $ \sum_{i=1}^{k}r_i=r+1 $. Let $\lambda_i$ be a partition of $r_i$. Then $$\mathcal{O}_{\rm Spr}(j^{W}_{W^\prime}(\mathop{\boxtimes}\limits_{i} E_{\lambda_i}))=(\lambda_1^{\top}\cup\lambda_2^{\top}\cup...\cup\lambda_k^{\top})^{\top}.$$
    \item[(ii)] Let $W$ be of type $B_r$ and let $W^{\vee}$ be its dual group of type $C_r$. Let $W^{\prime} \subseteq W^{\vee} $ be a subgroup of type $C_{r_1} + A_{r_2-1}+...+ A_{r_{k-1}-1} + C_{r_k}$. We also view $W^{\prime}$ as a subgroup of $W$ by the canonical isomorphism between $W$ and $W^{\vee}$ (i.e., the isomorphism $\vartheta^\vee$ in \eqref{E:Widen} for $n=1$). Let $\lambda_1$ and $\lambda_k$ be special $C$-partitions of $2r_1$ and $2r_k$, respectively. Let $\lambda_i$ be a partition of $A_{r_i-1}$ for $2 \lest i \lest k-1$. Then
      \begin{align*}
    	\mathcal{O}_{\rm Spr}(j^{W}_{W^\prime}(\mathop{\boxtimes}\limits_{i} E_{\lambda_i})) & =(\lambda_1^{\top}\cup(\lambda_2^{\top}\cup\lambda_2^{\top})\cup...\cup(\lambda_{k-1}^{\top}\cup\lambda_{k-1}^{\top})\cup(\lambda_k^+{}_B)^{\top})^{\top}{}_B\\
    	& =((\lambda_1^+{}_B)^{\top}\cup(\lambda_2^{\top}\cup\lambda_2^{\top})\cup...\cup(\lambda_{k-1}^{\top}\cup\lambda_{k-1}^{\top})\cup\lambda_k^{\top})^{\top}{}_B
    \end{align*}
    \item[(iii)] Let $W$ be of type $C_r$ and let $W^{\vee}$ be its dual group of type $B_r$. Let $W^{\prime} \subseteq W^{\vee} $ be a subgroup of type $D_{r_1} + A_{r_2-1}+...+ A_{r_{k-1}-1} + B_{r_k}$. We view $W^{\prime}$ as a subgroup of $W$ by the canonical isomorphism between $W$ and $W^{\vee}$. Let $\lambda_1$ be a special $D$-partition of $2r_1$ 
    and $\lambda_k$ be a special $B$-partition of $2r_k+1$. Let $\lambda_i$ be a partition of $A_{r_i-1}$ for $2 \lest i \lest k-1$. Then $$\mathcal{O}_{\rm Spr}(j^{W}_{W^\prime}(\mathop{\boxtimes}\limits_{i} E_{\lambda_i}))=(\lambda_1^{\top}{}_D\cup(\lambda_2^{\top}\cup\lambda_2^{\top})\cup...\cup(\lambda_{k-1}^{\top}\cup\lambda_{k-1}^{\top})\cup(\lambda_k^-{}_C)^{\top})^{\top}{}_C.$$
    \item[(iv)] Let $W$ be of type $D_r$ and $W^{\prime} \subseteq W $ be a subgroup of type $D_{r_1} + A_{r_2-1}+...+ A_{r_{k-1}-1} + D_{r_k}$. Let $\lambda_1$ and $\lambda_k$ be special $D$-partitions of $2r_1$ and $2r_k$. Let $\lambda_i$ be a partition of $A_{r_i-1}$ for $2 \lest i \lest k-1$. Then 
    \begin{align*}
        \mathcal{O}_{\rm Spr}(j^{W}_{W^\prime}(\mathop{\boxtimes}\limits_{i} E_{\lambda_i})) & =(\lambda_1^{\top}\cup(\lambda_2^{\top}\cup\lambda_2^{\top})\cup...\cup(\lambda_{k-1}^{\top}\cup\lambda_{k-1}^{\top})\cup(\lambda_{k}^{\top})_D)^{\top}{}_D\\
        & =((\lambda_1^{\top})_D\cup(\lambda_2^{\top}\cup\lambda_2^{\top})\cup...\cup(\lambda_{k-1}^{\top}\cup\lambda_{k-1}^{\top})\cup\lambda_{k}^{\top})^{\top}{}_D
    \end{align*}
\end{enumerate}    
\end{prop}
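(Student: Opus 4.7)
My plan is to treat each part by combining Spaltenstein's identification of $j$-induction with Lusztig--Spaltenstein (LS) induction of nilpotent orbits via the Springer correspondence, together with the classical-type LS-induction formulas. For part (i), $W^\prime = S_{r_1} \times \cdots \times S_{r_k}$ is the Weyl group of the standard Levi $L = \GL_{r_1} \times \cdots \times \GL_{r_k}$ in $\GL_{r+1}$; hence $\mathcal{O}_{\rm Spr}(j^W_{W^\prime}(\boxtimes_i E_{\lambda_i})) = \Ind_L^G(\lambda_1 \times \cdots \times \lambda_k)$, which in type $A$ equals $(\lambda_1^\top \cup \cdots \cup \lambda_k^\top)^\top$ by the standard formula for the type-$A$ induced orbit (Collingwood--McGovern, Ch.~7).

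For parts (ii)--(iv), the subgroup $W^\prime \subseteq W$ arises from a pseudo-Levi in the dual root system and in general is not a standard Levi of $W$. The strategy is to factor $j^W_{W^\prime}$ through the smallest standard parabolic subgroup $W_1$ of $W$ containing $W^\prime$. For instance, in case (ii) with $W$ of type $B_r$, the relevant $W_1$ is of type $B_{r_1} + A_{r_2-1} + \cdots + A_{r_{k-1}-1} + B_{r_k}$ (one extra simple root at the end-component). Transitivity of $j$-induction gives $j^W_{W^\prime} = j^W_{W_1} \circ j^{W_1}_{W^\prime}$; the outer step is LS-induction from a standard Levi in classical type, for which explicit formulas via $B/C/D$-collapses are available, while the inner step decomposes as the identity on the type-$A$ factors and a single $j$-induction problem on each $B/C/D$-endpoint factor.

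The core computation is thus $j^{W_{B_s}}_{W_{C_s}}(E_\mu)$ for a special $C$-partition $\mu$ of $2s$, and analogously $j^{W_{C_s}}_{W_{B_s}}$ and $j^{W_{C_s}}_{W_{D_s}}$ for cases (iii) and (iv). For these, I would use the symbol description of the Springer correspondence in classical types and the characterization of $j$-induction by the degree invariant $b$, together with the description of how the canonical identification $W_{B_s}\cong W_{C_s}$ behaves at the level of symbols. The special hypothesis on $\mu$ forces the image to land in a Lusztig family whose top orbit is given by the operation $(\cdot)^+{}_B$ on $\mu$ in case (ii), by $(\cdot)^-{}_C$ in case (iii), and by $(\cdot)_D$ in case (iv). Feeding these endpoint pieces $(\lambda_k^+{}_B)^\top$, $(\lambda_k^-{}_C)^\top$, $(\lambda_k^\top)_D$ into the outer LS-induction formula (where the middle $A$-factors contribute the doubled columns $\lambda_i^\top \cup \lambda_i^\top$ coming from the folding of the type-$A$ Levi inside the classical group) yields the stated formulas; the outermost $(-)^\top{}_X$ records the final $X$-collapse required because the induced symbol need not be special.

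The main obstacle will be the bookkeeping at the interface between the type-$A$ components and the $B/C/D$-endpoints, since the collapse operation at the end interacts nontrivially with the union-of-transposes from the type-$A$ blocks. The assumption that $\lambda_1$ and $\lambda_k$ are \emph{special} of the appropriate type is precisely what ensures that $j^{W_1}_{W^\prime}$ selects the expected constituent, so that we may commute the endpoint shift $(\cdot)^\pm$ past the transpose-and-union without ambiguity; without specialness, the $b$-function characterization could pick a different summand of the parabolic induction and the formula would fail. A secondary check is that the two expressions in (ii) and (iv) agree, which amounts to the identity $(\alpha \cup \beta^+)^\top = (\alpha^+ \cup \beta)^\top$ on the level of $B$- (resp.\ $D$-) collapses, to be verified directly from the definitions of the $+$ and collapse operations.
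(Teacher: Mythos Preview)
Your treatment of part (i) is correct and matches the paper, which cites Carter's Proposition 11.4.1 for the same type-$A$ LS-induction computation.

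For parts (ii)--(iv), however, your factorization strategy has a genuine gap. The group $W_1$ you describe, e.g.\ of type $B_{r_1} + A_{r_2-1} + \cdots + A_{r_{k-1}-1} + B_{r_k}$ inside $W(B_r)$, is \emph{not} a standard parabolic: a standard Levi of a classical Weyl group has at most one factor that is not of type $A$. In fact, since $W(B_s)=W(C_s)$ as abstract Coxeter groups, your $W_1$ coincides with $W'$ itself, so the proposed transitivity step $j^W_{W_1}\circ j^{W_1}_{W'}$ is vacuous. Consequently your ``core computation'' $j^{W_{B_s}}_{W_{C_s}}(E_\mu)$ is not a $j$-induction at all but simply the identity on the representation; the nontrivial content is the translation between the type-$C$ and type-$B$ Springer correspondences for the \emph{same} Weyl-group irreducible, which is exactly what linear Barbasch--Vogan duality encodes. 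After that translation you are still left with a $j$-induction from a genuine pseudo-Levi (not a Levi) of $W(B_r)$, for which the standard LS-induction formulas do not apply directly.

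The paper handles this differently. It first treats the degenerate case $r_1=0$, where $W'$ \emph{is} a standard Levi of $W$; there the identity $\mathcal{O}_{\rm Spr}(j^W_{W'}(\boxtimes_i E_{\lambda_i})) = d_{BV}\bigl(\mathrm{Sat}^{\mathbf{G}}_{\mathbf{M}}(\mathcal{O}_M)\bigr)$ from the theory of Barbasch--Vogan duality gives the formula immediately. For $r_1\neq 0$ it invokes Sommers' explicit formulas (equations (7) and (8) of \cite{Som01}) for $j$-induction from pseudo-Levi subgroups, together with transitivity of $j$-induction. So the missing ingredient in your plan is precisely a result that computes $j$-induction from a pseudo-Levi with two non-$A$ endpoints; Sommers supplies this, and the paper simply cites it rather than rederiving it via symbol combinatorics.
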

\begin{proof}
For (i), it follows easily from \cite[Proposition 11.4.1]{Car}. For (ii), if $r_1=0$, then the group $\mathbf{M}(\ol{F})$ associated with $W^{\prime}$ is a Levi subgroup of $\mathbf{G}(\ol{F})$. Let $\mathcal{O}_M$ be the nilpotent orbit given by $\lambda_2^{\top},...,\lambda_{k-1}^{\top},\lambda_k^+{}_B{}^{\top}$. Then by the property of linear Barbasch--Vogan duality, $\mathcal{O}_{\rm Spr}(j^{W}_{W^\prime}(\mathop{\boxtimes}\limits_{i} E_{\lambda_i})) $ is equal to $ d_{\text{BV}}(\text{Sat}^{\mathbf{G}(\ol{F})}_{\mathbf{M}(\ol{F})}(\mathcal{O}_M)) $, which is exactly the result in (ii). If $r_1 \neq 0$, then it follows from \cite[(7) and (8)]{Som01} and the transitivity of $j$-induction. The statement in (iii) and (iv) is shown with a similar argument as for (ii).
\end{proof}

\begin{lm} \label{lm:Ar}
Let $z\in \N_{\gest 1}$ and let $\mfr{p}$ be a partition of $r+1$.
\begin{enumerate}
    \item[(i)]  We have   
  \begin{equation}\label{lm:Ar(1)}
   d_{com}^{(z)}(\mfr{p}) =
\max   \left\{
	\begin{array}{cc}
	(\lambda_1^{\top}\cup\lambda_2^{\top}\cup...\cup\lambda_z^{\top})^{\top}: \\
\bullet \ \lambda_i, 1\lest i \lest z \text{ are partitions, possibly zero},\\
\bullet \  \lambda_1\cup\lambda_2\cup...\cup\lambda_z=\mfr{p}^{\top}
	\end{array}
	\right\}.
\end{equation}

    \item[(ii)] If $\mfr{p}_1^{\top}\cup \mfr{p}_2^{\top}=\mfr{p}^{\top}$ for two partitions $\mfr{p}_1, \mfr{p}_2$, then 
    $$d^{(z)}_{com}(\mfr{p}_1)\cup d^{(z)}_{com}(\mfr{p}_2)\lest d^{(z)}_{com}(\mfr{p}).$$
\end{enumerate}
\end{lm}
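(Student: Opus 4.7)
The plan is to exploit the classical conjugation-sum identity
\[
(\mu\cup\nu)^{\top}=\mu^{\top}+\nu^{\top},
\]
valid for arbitrary partitions $\mu,\nu$ (and thus $\lambda_1^\top\cup\cdots\cup\lambda_z^\top$ equals $(\lambda_1+\cdots+\lambda_z)^\top$). Transposing, the right-hand side of (i) becomes
\[
\max\Bigl\{\sum_{j=1}^{z}\lambda_j :\ \lambda_1\cup\cdots\cup\lambda_z=\mfr{p}^{\top}\Bigr\},
\]
where the partitions are summed coordinatewise (padding with zeros). Writing $\mfr{p}^{\top}=(q_1\geq q_2\geq\cdots\geq q_\ell)$, with $\ell=p_1$, I will introduce the \emph{round-robin decomposition} $\lambda_j^\ast$ that assigns $q_i$ to pile $((i-1)\bmod z)+1$. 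Then $(\sum_j\lambda_j^\ast)_i = q_{(i-1)z+1}+\cdots+q_{iz}$.

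For part (i), the first step is to verify that this round-robin sum equals $d_{com}^{(z)}(\mfr{p})$. Using $q_j=\#\{k:p_k\geq j\}$, a direct exchange of summation gives
\[
\sum_{j=(i-1)z+1}^{iz}q_j=\sum_{k}\min\bigl(z,\max(0,p_k-(i-1)z)\bigr),
\]
and the right side is exactly $(\sum_k \mfr{s}(p_k;z))_i = d_{com}^{(z)}(\mfr{p})_i$, which establishes equality in that particular case. The second step is to show $\sum_j\lambda_j \leq d_{com}^{(z)}(\mfr{p})$ in dominance order for an \emph{arbitrary} decomposition. For this I compute the $k$-th partial sum:
\[
\sum_{i\leq k}\Bigl(\sum_j\lambda_j\Bigr)_i=\sum_j (\text{sum of top } k \text{ parts of } \lambda_j).
\]
Each $\lambda_j$ contributes at most $k$ parts of $\mfr{p}^\top$, and altogether at most $kz$ parts are counted; since every counted value is a part of $\mfr{p}^\top$, this sum is bounded by the sum of the top $kz$ parts of $\mfr{p}^\top$, which is precisely $\sum_{i\leq k}(d_{com}^{(z)}(\mfr{p}))_i$ by the calculation above.

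For part (ii), the strategy is constructive. Let $\lambda_j^{1},\lambda_j^{2}$ be decompositions of $\mfr{p}_1^\top$ and $\mfr{p}_2^\top$ achieving the maxima in (i), so that $d_{com}^{(z)}(\mfr{p}_h)=\sum_j\lambda_j^{h}$ for $h=1,2$. Set $\mu_j:=\lambda_j^{1}\cup\lambda_j^{2}$ (sorted union). Then
\[
\bigcup_j\mu_j=\Bigl(\bigcup_j\lambda_j^{1}\Bigr)\cup\Bigl(\bigcup_j\lambda_j^{2}\Bigr)=\mfr{p}_1^{\top}\cup\mfr{p}_2^{\top}=\mfr{p}^{\top},
\]
so $\{\mu_j\}$ is a valid decomposition for $\mfr{p}$, and (i) gives $\sum_j\mu_j\leq d_{com}^{(z)}(\mfr{p})$. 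It remains to check the dominance inequality $d_{com}^{(z)}(\mfr{p}_1)\cup d_{com}^{(z)}(\mfr{p}_2)\leq\sum_j\mu_j$. On top-$k$ partial sums the left side equals $\max_{a+b=k}\sum_j\bigl((\text{top $a$ of }\lambda_j^{1})+(\text{top $b$ of }\lambda_j^{2})\bigr)$, where the choice of $(a,b)$ is \emph{uniform} across $j$. The right side equals $\sum_j\max_{a_j+b_j=k}\bigl((\text{top $a_j$ of }\lambda_j^{1})+(\text{top $b_j$ of }\lambda_j^{2})\bigr)$, where $(a_j,b_j)$ may depend on $j$. The latter evidently dominates the former, yielding (ii).

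The only genuine subtlety I anticipate is bookkeeping: the identities conflating $+$ and $\cup$ via transpose must be applied to partitions of differing lengths (with zero-padding implicit), and the $k=0,kz\geq\ell$ edge cases require care so that the dominance comparison in the second step of (i) remains valid when piles are empty. Once these are handled, both parts follow directly from the two complementary computations of top-$k$ partial sums, without invoking any further structure.
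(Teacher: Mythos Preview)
Your proof is correct and follows essentially the same route as the paper: both arguments identify $d_{com}^{(z)}(\mfr{p})_i$ with the sum $\sum_{j=(i-1)z+1}^{iz}q_j$ of consecutive columns of $\mfr{p}$ and recognize this as the value achieved by the optimal (round-robin) decomposition. Your write-up is considerably more detailed than the paper's, which simply asserts the formula \eqref{eq:BV_dual_Ar} and says (ii) ``also follows''; in particular, your explicit partial-sum dominance argument for the upper bound in (i) and the constructive $\mu_j=\lambda_j^1\cup\lambda_j^2$ argument for (ii) fill in steps the paper leaves to the reader.
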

\begin{proof} For (i), we write
$$(l_1,...,l_k):=\text{\rm max}\{(\lambda_1^{\top}\cup\lambda_2^{\top}\cup...\cup\lambda_z^{\top})^{\top}: \lambda_1\cup\lambda_2\cup...\cup\lambda_z=\mfr{p}^{\top}\}$$ with $ l_1\gest ... \gest l_k $ and 
$$d^{(z)}_{com}(\mfr{p})=(l_1^{\prime},...,l_m^{\prime})$$ with $ l_1^{\prime}\gest ... \gest l_m^{\prime} $. However, viewing $\mfr{p}=(p_1, ..., p_i, ..., p_s)$ as a Young diagram with $p_i$ being the length of the $i$-th row,  one can check easily that \begin{equation} \label{eq:BV_dual_Ar}
l_i=l^{\prime}_i=\sum_{j=z(i-1)+1}^{zi}(\text{length of the }j \text{-th column of } \mfr{p})
\end{equation}
for every $i$. This gives (i). The equality  \eqref{eq:BV_dual_Ar} also gives (ii).
\end{proof}

\subsection{Proof of Theorem \ref{T:main} for classical types} \label{SS:Pf-cla}
Before giving a case by case argument, we make a crucial observation, to be used in the proof. 

For $x\in \mca{C}$, suppose $\Phi_x = \prod_j \Phi_j$ where $\Phi_j \subseteq \Phi_x$ is an irreducible component. It gives a decomposition
$$\Phi_{x, \chi_i}^{Q,n} = \prod \Phi_{j, \chi_i}^{Q,n},$$
where $\Phi_{j, \chi_i}^{Q,n} \subseteq \Phi_j$. We notice that the projection $x_j$ say of $x-y_i$ in the $\R$-space spanned by $\Phi_j^\vee$ is naturally a coweight for $\Phi_j$. Thus 
$$\Phi_{j, \chi_i}^{Q,n} = (\Phi_j)_{x_j, 0}^{Q,n},$$
that is, $\Phi_{j, \chi_i}^{Q,n}$ can be reduced to the hyperspecial case for the root system $\Phi_j$ with respect to the hyperspecial $x_j$ and the trivial character (in the place of $\chi_i$). This enables us to deal with $\Phi_{x, \chi_i}^{Q,n}$ for general $x\in \mca{C}$ by applying Lemma \ref{L:Phixchi} to each of the component $\Phi_{j, \chi_i}^{Q,n}$. 

Now we proceed to the proof of Theorem \ref{T:main} for classical types.

\subsubsection{Type $A_r$}
For $G$ of type $A_r$, by Lemma \ref{L:Phixchi}, the root subsystem $\Phi^{Q,n}_{x,\chi_i}$ is of the form 
$$A_{r_1-1}+ A_{r_2-1}+...+ A_{r_k-1}$$
satisfying that $ \sum_{i=1}^{k} r_i=r+1 $ and $k\lest \nn$; also, every such root subsystem is isomorphic to $\Phi_{x, \chi_i}^{Q,n}$ for some $x \in \mca{C}$ and $\chi_i$.  It follows from Proposition \ref{P:j-induction}(i), Lemma \ref{lm:Ar}(1), and the definition of $d_{BV,G}^{(n)}$ that
$$\mca{D}(\mca{O}_\mfr{p}^\vee) = \max \set{d_{BV, G}^{(n)}(\mca{O}_{\mfr{p}'}^\vee): \ \mfr{p}' \gest \mfr{p}}.$$
However, the map $d^{(n)}_{BV, G}$ is order-reversing \cite{GLLS}, i.e., $d_{BV, G}^{(n)}(\mca{O}_{\mfr{p}'}^\vee) \lest d_{BV, G}^{(n)}(\mca{O}_{\mfr{p}}^\vee)$ for all $\mfr{p}'\gest \mfr{p}$. This shows that
$$\mca{D}(\mca{O}_\mfr{p}^\vee) = d_{BV, G}^{(n)}(\mca{O}_{\mfr{p}}^\vee)$$
 for type $A_r$.

\subsubsection{Type $B_r$}
For $G$ of type $B_r$, we first consider the case $\nn$ is odd. In this case, $\wt{G}^\vee$ is of type $C_r$.  Since $\vartheta(\Phi^{Q,n}_{x,\chi_i})$ is  a pseudo-Levi subsystem of $\Phi_{Q,n}$, the root subsystem $\Phi^{Q,n}_{x,\chi_i}$ is of the form 
$$D_{r_1}+ A_{r_2-1}+...+ A_{r_{k-1}-1} + B_{r_k}.$$
%since it is at the same time associated with the pseudo Levi of $G_{Q,n}$ of type $B_r$ and $G^{\vee}$ of type $C_r$.  
Since $\Phi_x$ is of the type $D_{m} + B_{r-m}$, by Lemma \ref{L:Phixchi} and the discussion above in this subsection, we can assume that
$$\Phi^{Q,n}_{x,\chi_i} \simeq ( \underbrace{D_{r^{\prime}_1}+ A_{r^{\prime}_2-1}+...+ A_{r^{\prime}_{k_{1}}-1}}_{\subseteq D_m})+ (\underbrace{A_{r^{\prime\prime}_1-1}+ A_{r^{\prime\prime}_2-1}+...+ A_{r^{\prime\prime}_{k_{2}-1}-1} + B_{r^{\prime\prime}_{k_2}} }_{\subseteq B_{r-m}})$$
with $r_1'=r_1, r_{k_2}''=r_k$.
By the observation given at the beginning of this subsection and Lemma \ref{L:Phixchi}, we get 
$$2k_1-1  \lest \nn \text{ and } 2k_2-1\lest \nn.$$

Now, let $\lambda_1,...,\lambda_{k_1}$ be special partitions of  $D_{r_{k_1}'}$ and $A_{r_i'-1}$'s. Let $\mu_1,...,\mu_{k_2}$ be special partitions for $A_{r_i''-1}$'s and $B_{r_{k_2}''}$.
Let $\mca{O}_\mfr{p}^\vee \in \mca{N}(\wt{G}^\vee)$ be an orbit with $\mfr{p}$ being a symplectic partition.
 By Proposition \ref{P:j-induction}, one has
$$\begin{aligned}
& \mca{D}(\mca{O}^\vee_\mfr{p})_{x,\chi_i} \\
= & \text{max}\left\{
\begin{array}{cc}
(\lambda^{\top}_1\cup\lambda^{\top}_2\cup\lambda^{\top}_2\cup...\lambda^{\top}_{k_1}\cup\lambda^{\top}_{k_1})^{\top}_{\ D}\cup (\mu^{\top}_1\cup\mu^{\top}_1\cup...\mu^{\top}_{k_2-1}\cup\mu^{\top}_{k_2-1}\cup\mu^{\top}_{k_2})^{\top}_{\ B}: \\
\bullet \ C_\mfr{p}((\lambda_i)_i, (\mu_j)_j)
\end{array}
\right\}
\end{aligned}
$$
where the condition $C_\mfr{p}((\lambda_i)_i, (\mu_j)_j)$ is given by$$ (\lambda_1\cup\lambda_2\cup\lambda_2\cup...\cup\lambda_{k_1}\cup\lambda_{k_1}\cup\mu_1\cup\mu_1\cup...\cup\mu_{k_2-1}\cup\mu_{k_2-1}\cup (\mu_{k_2}^-)_C)^{\top}_{\ C} \gest \mfr{p}.$$

We set 
$$\mfr{p}_1:=\lambda_1\cup\lambda_2\cup\lambda_2\cup...\cup\lambda_{k_1}\cup\lambda_{k_1}, \quad \mfr{p}_2:=\mu_1\cup\mu_1\cup...\cup\mu_{k_2-1}\cup\mu_{k_2-1}\cup (\mu_{k_2}^{-})_C$$
and
$$\mfr{q}_1:=\lambda^{\top}_1\cup\lambda^{\top}_2\cup\lambda^{\top}_2\cup...\lambda^{\top}_{k_1}\cup\lambda^{\top}_{k_1}, \quad \mfr{q}_2:=\mu^{\top}_1\cup\mu^{\top}_1\cup...\mu^{\top}_{k_2-1}\cup\mu^{\top}_{k_2-1}\cup((\mu_{k_2}^{-})_C)^{\top}.$$
Since $2k_1-1 \lest \nn$, we have $\mfr{q}_1^\top \lest d_{com}^{(\nn)}(\mfr{p}_1^\top)$ by Lemma \ref{lm:Ar}(i). Similarly, $\mfr{q}_2^\top \lest d_{com}^{(\nn)}(\mfr{p}_2^\top)$.
This gives
$$ \mfr{q}_1^\top \cup \mfr{q}_2^\top \lest d_{com}^{(\nn)}(\mfr{p}_1^\top) \cup d_{com}^{(\nn)}(\mfr{p}_2^\top) \lest d_{com}^{(\nn)}((\mfr{p}_1 \cup \mfr{p}_2)^\top),$$
where the last inequality follows from Lemma \ref{lm:Ar}(ii). But $d_{com}^{(\nn)}$ is order-reversing, and thus $\mfr{q}_1^\top \cup \mfr{q}_2^\top \lest d_{com}^{(\nn)}(\mfr{p})$. Since $\mu_{k_2}$ is a special partition, we have $\mu_{k_2}=\mu_{k_2}^-{}_C{}^+{}_B$; this implies that $\mca{D}(\mca{O}_\mfr{p}^\vee)_{x,\chi_i}$ is bounded above by $d^{(\nn)}_{com}(\mfr{p})^+$ and thus by $d^{(\nn)}_{com}(\mfr{p})^+{}_B=d_{BV,G}^{(n)}(\mfr{p})$.

For type $B_r$ and $\nn$ even, we see that $G_{Q,n}$ and $G^{\vee}$ are both of type $C_r$. In this case the root subsystem $\Phi^{Q,n}_{x,\chi_i}$ is of the form 
$$B_{r_1}+ A_{r_2-1}+ A_{r_3-1}+...+ A_{r_{k-1}-1}+ B_{r_k}.$$
Let $\Phi_x$ be of the type $D_{m} + B_{r-m}$. In either $\Phi':=D_{m}$ or $B_{r-m}$, we see that 
$$2 \cdot \val{\msc{A}(\Phi^{Q,n}_{x,\chi_i} \cap \Phi')}+  \val{\msc{E}(\Phi^{Q,n}_{x,\chi_i} \cap \Phi')} \lest \nn.$$ 
By similar argument as above, the theorem holds for type $B_r$ and $\nn$ even.

\subsubsection{Type $C_r$}
For type $C_r$ and odd $\nn$, the analysis is almost the same as the type $B_r$ case, and thus we omit the details. For type $C_r$ and $\nn$ even, since $G_{Q,n}$ is of type $B_r$, the root subsystem $\Phi^{Q,n}_{x,\chi_i}$ is of the form 
$$D_{r_1}+ A_{r_2-1}+...+ A_{r_{k-1}-1} + C_{r_k}.$$
Let $\Phi_x$ be of type $C_{m} \times C_{r-m}$ for some $m$. We assume that
$$\Phi^{Q,n}_{x,\chi_i} \simeq ( \underbrace{D_{r^{\prime}_1}+ A_{r^{\prime}_2-1}+...+ A_{r^{\prime}_{k_{1}}-1}}_{\subseteq C_m}) + (\underbrace{A_{r^{\prime\prime}_1-1}+ A_{r^{\prime\prime}_2-1}+...+ A_{r^{\prime\prime}_{k_{2}-1}-1} + C_{r^{\prime\prime}_{k_2}} }_{\subseteq C_{r-m}})$$
with $r_1'=r_1$ and $r_{k_2}''=r_k$.
We elaborate on this case, since other cases (such as when $D_{r_1}$ and $C_{r_k}$ are both in $C_m$) 
are also verified by similar argument as below.

Let $\lambda_1,...,\lambda_{k_1}$ be special partitions of $D_{r_{k_1}'}$ and $A_{r_i'-1}$'s. Let $\mu_1,...,\mu_{k_2}$ be special partitions for $A_{r_i''-1}$'s and $C_{r_{k_2}''}$.
Let $\mca{O}_\mfr{p}^\vee \in \mca{N}(\wt{G}^\vee)$ be an orbit with $\mfr{p}$ being a symplectic partition.
By Proposition \ref{P:j-induction}, we have
$$\begin{aligned}
	& \mca{D}(\mca{O}^\vee_\mfr{p})_{x,\chi_i} \\
	= & \text{max}
	\left\{
	\begin{array}{cc}
	(\lambda^{\top}_1{}_D\cup\lambda^{\top}_2\cup\lambda^{\top}_2\cup...\lambda^{\top}_{k_1}\cup\lambda^{\top}_{k_1})^{\top}_{\ C}\cup (\mu^{\top}_1\cup\mu^{\top}_1\cup...\mu^{\top}_{k_2-1}\cup\mu^{\top}_{k_2-1}\cup\mu^{\top}_{k_2})^{\top}_{\ C}: \\
	\bullet \  C_\mfr{p}((\lambda_i)_i, (\mu_j)_j)
	\end{array}
	\right\}
\end{aligned}
$$
where the condition $C_\mfr{p}((\lambda_i)_i, (\mu_j)_j)$ is given by$$ (\lambda_1\cup\lambda_2\cup\lambda_2\cup...\cup\lambda_{k_1}\cup\lambda_{k_1}\cup\mu_1\cup\mu_1\cup...\cup\mu_{k_2-1}\cup\mu_{k_2-1}\cup \mu_{k_2})^{\top}_{\ C} \gest \mfr{p}.$$
By Lemma \ref{L:Phixchi}, one has
$$2k_1-1  \lest \nn/2 \text{ and } 2k_2-1\lest \nn/2.$$
Following the same line of reasoning as in type $B_r$, we get $$ (\lambda^{\top}_1\cup\lambda^{\top}_2\cup\lambda^{\top}_2\cup...\lambda^{\top}_{k_1}\cup\lambda^{\top}_{k_1})^{\top}\cup (\mu^{\top}_1\cup\mu^{\top}_1\cup...\mu^{\top}_{k_2-1}\cup\mu^{\top}_{k_2-1}\cup\mu^{\top}_{k_2})^{\top}\lest d_{com}^{(\nn/2)}(\mfr{p}). $$

Since $\lambda_1$ is a $D$-special partition, the partition $\lambda_1^{\top}$ is of type $C$. Then by \cite[Corollary 3.16]{GLLS}, $\lambda_1{}^\top{}_D{}^{\top}\lest \lambda_1^{+-}$. We see $\mca{D}(\mca{O}_\mfr{p}^\vee)_{x,\chi_i}$ is bounded above by $d^{(\nn/2)}_{com}(\mfr{p})^{+-}$ and thus by $d^{(\nn/2)}_{com}(\mfr{p})^{+-}{}_C$. 
This finishes the proof if $\nn/2$ is odd.
We observe that when $\nn/2$ is even, the partition $d^{(\nn/2)}_{com}(\mfr{p})$ contains only even numbers; therefore,  $d^{(\nn/2)}_{com}(\mfr{p})^{+-}{}_C=d^{(\nn/2)}_{com}(\mfr{p})_C$ in this case and an upper bound of $\mca{D}(\mca{O}_\mfr{p}^\vee)_{x,  \chi_i}$ is also given by $d_{BV,G}^{(n)}(\mfr{p})$.

\subsubsection{Type $D_r$}
For $G$ of type $D_r$,  following the same method as above, we can obtain the inequality $\mca{D}(\mca{O}^\vee_\mfr{p}) \lest d^{(\nn)}_{com}(\mfr{p})_D = d_{BV,G}^{(n)}(\mca{O}_\mfr{p}^\vee)$ at the partition level.

Now we consider $D_r$ with $r$  even, and $\mca{O}^\vee_\mfr{p}$ is a very even orbit. We need to show that if the equality $\mca{D}(\mca{O}^\vee_\mfr{p})  = d_{BV,G}^{(n)}(\mca{O}_\mfr{p}^\vee)$ holds at the partition level, then $\mca{D}(\mca{O}^\vee_\mfr{p})$ has the same labelling as $d_{BV,G}^{(n)}(\mca{O}_\mfr{p}^\vee)$ as well. We observe that for a very even partition $\mfr{p}=(2l_1,2l_1,...,2l_m,2l_m)$ with $l_1\gest ...\gest l_m$, if $\mfr{p}'$ is a partition of $D_r$ such that $\mfr{p}' > \mfr{p}$,  then we get
$$\mfr{p}^{\prime} \gest (2l_1,2l_1,...,2l_m+1,2l_m-1)=:\mfr{p}^\triangle.$$
But $\mfr{p}^\triangle$ is not a very even partition, and thus neither is $d^{(n)}_{BV, G}(\mfr{p}^\triangle)$. Hence, if the partition underlying $\mca{D}(\mca{O}^\vee_\mfr{p})$ is equal to that of $d^{(n)}_{BV, G}(\mfr{p})$, then 
$$\mca{D}(\mca{O}^\vee_\mfr{p})=d_{BV, G}^{(n)}(\mfr{p})> d_{BV, D}^{(n)}(\mfr{p}^\triangle) \gest d_{BV, G}^{(n)}(\mfr{p}^{\prime}) \gest \mca{D}(\mca{O}^\vee_{\mfr{p}^{\prime}}). $$
This shows that for $\mfr{p}$ very even, we have
$$\mca{D}(\mfr{p}^\vee) = \mca{D}^\flat(\mfr{p}^\vee).$$
We know $\Phi^{Q,n}_{x,\chi_i}$ is of type $D_{r_1} + A_{r_2-1}+...+ A_{r_{k-1}-1} + D_{r_k}$. Note that
$$\mathcal{O}_\text{\rm Spr}(j^{W_x}_{W(\Phi^{Q,n}_{x,\chi_i})}\sigma^{\rm spe}) \text{ and } \mathcal{O}_{\text{\rm Spr}}(j^{W}_{W(\Phi^{Q,n}_{x,\chi_i})}(\sigma \otimes \varepsilon)^{\text{\rm spe}})$$ are both very even only if $r_1,r_k$ are even and $4|r_i$ for $2\lest r \lest k-1$. Then when $4|r$, we have $4|r_1,r_k$ or $4\nmid r_1,r_2$. On the other hand, when $4\nmid r$, we have $4|r_1,4\nmid r_2$ or $4\nmid r_1,4|r_2$. For type $D_r$, if $\pi$ is a special representation corresponding to a very even partition, then the nilpotent orbit associated with $\pi \otimes\varepsilon$ has the same labelling I or II (following the convention in \cite{BMW25}) as that with $\pi$  when $4|r$, and has different labelling when $4\nmid r$. We write
$$\sigma^{\text{spe}}=\sigma_1\boxtimes...\boxtimes \sigma_k \in \Irr(W(\Phi_{x,\chi_i}^{Q,n})),$$
where $\sigma_i$ is a special irreducible representation of the Weyl group of the $i$-th component in $D_{r_1} +A_{r_2-1}+...+ A_{r_{k-1}-1} + D_{r_k}$. This gives 
$$(\sigma\otimes\varepsilon)^{\text{spe}}=\sigma^{\text{spe}}\otimes \varepsilon=(\sigma_1\otimes\varepsilon)\boxtimes...\boxtimes (\sigma_k\otimes\varepsilon).$$
We see that when $4|r$, the nilpotent orbits associated with $\sigma_1$ and $\sigma_1\otimes\varepsilon$ have the same labellings if and only if the orbits associated with $\sigma_k$ and $\sigma_k\otimes\varepsilon$ have the same labellings.  On the other hand, if $4\nmid r$, then the orbits associated with $\sigma_1$ and $\sigma_1\otimes\varepsilon$ have the same labellings if and only if the orbits associated with $\sigma_k$ and $\sigma_k\otimes\varepsilon$ possess different labellings.

In general, $W_x$ is of type $D_{m_1}+ D_{m_2} \subseteq D_r$. Let $\mfr{p}_1 \times \mfr{p}_2$ be the partition underlying $\mathcal{O}_{\text{\rm Spr}}(j^{W_x}_{W(\Phi^{Q,n}_{x,\chi_i})}(\sigma \otimes \varepsilon)^{\text{\rm spe}})$. Assume that $\mfr{p}_1$ and $\mfr{p}_2$ are both very even. We divide the four possible labellings of $\mfr{p}_1 \times \mfr{p}_2$ into two groups $\set{\text{(I,I),(II,II)}}$ and $\set{\text{(I,II),(II,I)}}$ and call each group an equivalence classes of labellings for $\mfr{p}_1\times\mfr{p}_2$. Assume the equality $$\mathcal{O}_{\text{\rm Spr}}(j^{W}_{W(\Phi^{Q,n}_{x,\chi_i})}(\sigma \otimes \varepsilon)^{\text{\rm spe}})=\mca{O}^\vee_\mfr{p}$$holds, then the labelling of $\mfr{p}_1\times\mfr{p}_2$ is determined by $\mca{O}^\vee_\mfr{p}$ up to an equivalence class. It follows from the argument in the preceding paragraph that the labellings for $$\mathcal{O}_\text{\rm Spr}(j^{W_x}_{W(\Phi^{Q,n}_{x,\chi_i})}\sigma^{\rm spe}) \text{ and } \mathcal{O}_{\text{\rm Spr}}(j^{W_x}_{W(\Phi^{Q,n}_{x,\chi_i})}(\sigma \otimes \varepsilon)^{\text{\rm spe}})$$are the same up to an equivalence class if and only if $4|r$. Thus $$\text{\rm Sat}^{\mathbf{G}(\wt{F})}_{\mathbf{L}_x^F(\wt{F})}(\mathcal{O}_\text{\rm Spr}(j^{W_x}_{W(\Phi^{Q,n}_{x,\chi_i})}\sigma^{\rm spe})) \text{ and } \mathcal{O}_{\text{\rm Spr}}(j^{W}_{W(\Phi^{Q,n}_{x,\chi_i})}(\sigma \otimes \varepsilon)^{\text{\rm spe}})$$have the same labellings if and only if $4|r$. This is compatible with the definition of $d_{BV, G}^{(n)}$ for type $D$ in \eqref{E:D-dec}.

The above completes the proof for type $D_r$ and also the verification of Theorem \ref{T:main} for $G$ of classical type.

\subsection{Proof of Theorem \ref{T:main} for exceptional types} \label{SS:exc}
For a cover $\wt{G}$ of exceptional type, we have done the computation of $\mca{D}(\mca{O}^\vee)$ by using ``PyCox" developed by Meinolf Geck etc. We sketch the main algorithmic steps as follows:
\begin{itemize}
    \item[$\bullet$] First,  by Remark \ref{R:minPhi}, we only need to consider the minimal $\Phi^{Q,n}_{x,\chi_i}$ and the ones containing $E_7$ or $E_8$. We sort out all such $\Phi^{Q,n}_{x,\chi_i}$ up to conjugacy.
    \item[$\bullet$] Second, we use ``PyCox" to compute the $j$-induction $j^{\vartheta^{\vee}(W)}_{\vartheta^{\vee}(W(\Phi^{Q,n}_{x,\chi_i}))}$. We traverse all the special representations and pick up the ones satisfying the condition as stipulated in the definition of $\mca{D}(\mca{O}^\vee)$ in  \eqref{E:D}.
    \item[$\bullet$] Third, for a representation $\sigma \in \Irr(W(\Phi_{x, \chi_i}^{Q,n}))$ obtained in the preceding step, we take the special representation in the same family as $\sigma \otimes \varepsilon$ and again  use ``PyCox" to compute the $j$-induction $j^{W_x}_{W(\Phi^{Q,n}_{x,\chi_i})}$.
    \item[$\bullet$] Finally, we compute the saturation by using the tables extracted from \cite{Som98}. For each $W_x, x\in \mca{C}$, we put the orbits in a list and thus easily obtain the maximal ones: this gives $\mca{D}(\mca{O}^\vee)_x$ as in \eqref{E:Dx}. Taking maximum over $x\in \mca{C}$ gives us $\mca{D}(\mca{O}^\vee)$.
\end{itemize}

We illustrate the output of the last step above by giving two examples concerning $G_2$ and $E_6$. Note that $\Phi_{x,\chi_i}^{Q,n}$ actually depends only on $\nn$, as in the classical type case. In Figure \ref{F:G2} and \ref{F:E6}, the second column starting with ``orbit" denotes $\mca{O}^\vee$. A later column starts with the term $X\_ Y\_ \nn$, where $X$ means the group $G$ and $Y=W_x$. The entries in this column denotes $\mca{D}(\mca{O}^\vee)_{x}$, which is always a singleton set. For example, the $(2, G2\_ G2\_ 3)$-entry in Figure \ref{F:G2} means that for $\nn=3$, we have 
$$\mca{D}(\tilde{A}_1)_{0} = G_2(a_1).$$ 
Similarly, for $G=E_6$ and $x\in \mca{C}$ such that $W_x = 3A_2$, the $(7, E6\_ 3A2\_ 3)$ -entry in Figure \ref{F:E6} means that for $\nn=3$ we have $\mca{D}(A_4)_x = D_4(a_1)$.

In any case, from the data of $\mca{D}(\mca{O}^\vee)_x$ for all $x\in \mca{C}$ (available at \url{https://mathrunze.github.io/}) and thus $\mca{D}(\mca{O}^\vee)$ we have checked that $\mca{D}(\mca{O}^\vee) = d_{BV,G}^{(n)}(\mca{O}^\vee)$
for all $\mca{O}^\vee \in \mca{N}(\wt{G}^\vee)$, i.e., Theorem \ref{T:main} holds for exceptional groups.

\begin{figure}[H]
%\begin{figure}[htbp]
\includegraphics[scale=0.39]{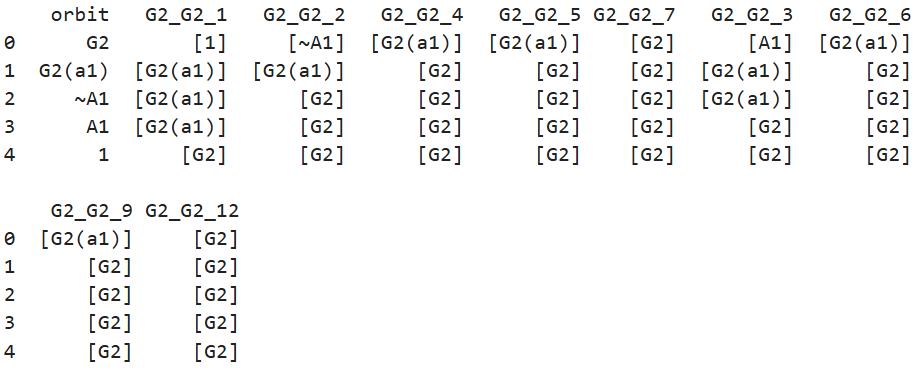}
\caption{$W$ is of type $G_2$ and $x$ is hyperspecial}
\label{F:G2}
\end{figure}

\begin{figure}[H]
%\begin{figure}[htbp] 
% \label{F:E6}
\includegraphics[scale=0.4]{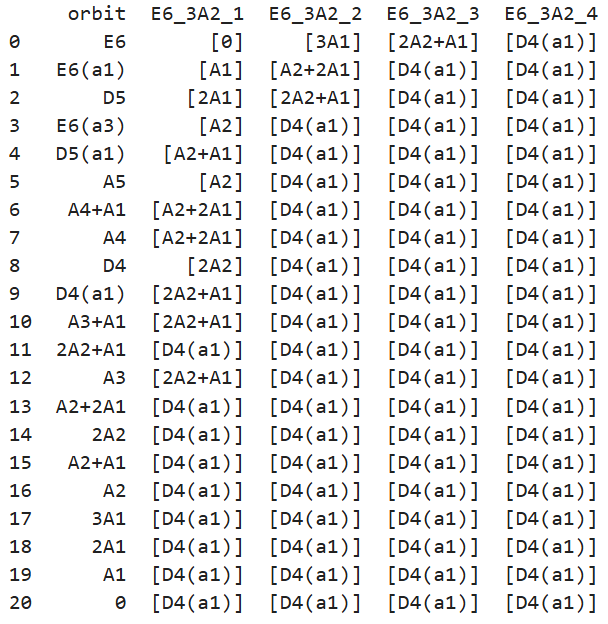}
\caption{$W$ is of type $E_6$ and $W_x$ is of the type $3A_2$}
\label{F:E6}
\end{figure}

\subsection{Upper bound and some equalities}
Theorem \ref{T:main} is pivotal in giving us the upper bound of ${\rm WF}^{\rm geo}({\rm AZ}(\pi))$ for $\pi \in \Irr_\iota(\wt{G})^I$.

\begin{cor} \label{C:key}
Assume $p$ is sufficiently large.
Let $\pi(s, u, \tau) \in \Irr_\iota(\ol{G})^I$ be with positive real $s$.
\begin{enumerate}
\item[(i)] One has 
$$\text{\rm WF}^{\text{\rm geo}}(\text{\rm AZ}(\pi(s,u,\tau))) \lest d^{(n)}_{BV, G}(\mca{O}_u^\vee),$$
where $\mca{O}^\vee_u \in \mca{N}(\wt{G}^\vee)$ is the orbit associated with $u$. Moreover, if $G$ is of type $A$, then the equality holds.
\item[(ii)] Assume $\mca{D}(\mca{O}_u^\vee) < \mca{D}(\mca{O}_{u'}^\vee)$ for every $\mca{O}_{u'}^\vee > \mca{O}_u^\vee$. Then 
$$\mca{D}^\flat(\mca{O}_u^\vee) = \text{\rm WF}^{\text{\rm geo}}(\text{\rm AZ}(\pi(s,u,\mbm{1}))) = \mca{D}(\mca{O}_u^\vee).$$
If particular, if $G$ is of exceptional type and the orbit in $\mca{N}(\mbf{G}(\ol{F}))$, which canonically corresponds to $\mca{O}_u^\vee$ via weighted Dynkin diagrams, lies in  ${\rm Im}(d_{BV,G}^{(n)})$, then we have the equality
$$\text{\rm WF}^{\text{\rm geo}}(\text{\rm AZ}(\pi(s,u,\mbm{1}))) = d_{BV,G}^{(n)}(\mca{O}_u^\vee).$$
\end{enumerate}
\end{cor}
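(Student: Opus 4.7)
The plan is to derive the corollary from Proposition \ref{main_prop} and Theorem \ref{T:main} by routing through the auxiliary invariant $\mca{D}(\mca{O}^\vee)$. For the upper bound in (i), every $\sigma$ entering the maximum in Proposition \ref{main_prop} is a constituent of $\Res_{W(\Phi^{Q,n}_{x,\chi_i})}^W(\rho \otimes \varepsilon)$ for some $\rho \in \Irr(W)_{s,u,\tau}$. The first bullet following \eqref{eq:res} identifies $\vartheta^\vee(\rho)$ with $E_{u',\eta}$ for some $\mca{O}_{u'}^\vee \gest \mca{O}_u^\vee$ and geometric $\eta \in \Irr(A_{\mca{O}_{u'}})$, and Lemma \ref{lm:AA}---whose proof via \cite[Proposition 4.3]{AcAu07} goes through verbatim for an arbitrary local system---then yields
$$\mca{O}_{\rm Spr}\bigl(j^{\vartheta^\vee(W)}_{\vartheta^\vee(W(\Phi^{Q,n}_{x,\chi_i}))}(\vartheta^\vee(\sigma) \otimes \varepsilon)^{\rm spe}\bigr) \gest \mca{O}_{u'}^\vee \gest \mca{O}_u^\vee.$$
Hence each such $\sigma$ lies in the defining family of $\mca{D}(\mca{O}_u^\vee)_{x,\chi_i}$, so $\WF^{\rm geo}(\text{AZ}(\pi)) \lest \mca{D}(\mca{O}_u^\vee) \lest d_{BV,G}^{(n)}(\mca{O}_u^\vee)$, with the last bound supplied by Theorem \ref{T:main}.

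For the equality in (i) when $G$ is of type $A$, all component groups are trivial, so $\tau = \mbm{1}$ and the second bullet after \eqref{eq:res} gives $\rho \in \Irr(W)_{s,u,\mbm{1}}$ with $\vartheta^\vee(\rho) = E_{u,\mbm{1}}$. Writing $\mfr{p} := \mfr{p}_u$, I would select $x \in \mca{C}$ and $\chi_i$ so that $\Phi^{Q,n}_{x,\chi_i} \simeq A_{r_1-1} + \dots + A_{r_\nn-1}$ realises a decomposition with $(\lambda_1^\top \cup \dots \cup \lambda_\nn^\top)^\top = d_{com}^{(\nn)}(\mfr{p}) = d_{BV,G}^{(n)}(\mca{O}_\mfr{p}^\vee)$, which exists by Lemma \ref{lm:Ar}(i). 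Under the type-$A$ Springer identification $E_\mu \otimes \varepsilon = E_{\mu^\top}$, Young's rule (equivalently Frobenius reciprocity with Littlewood--Richardson) makes $\boxtimes_j E_{\lambda_j}$ a constituent of $\rho \otimes \varepsilon|_{W(\Phi^{Q,n}_{x,\chi_i})}$, and Proposition \ref{P:j-induction}(i) computes the associated saturation as $d_{BV,G}^{(n)}(\mca{O}_\mfr{p}^\vee)$, realising the upper bound.

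For part (ii), I read the hypothesis as the strict order-reversal $\mca{D}(\mca{O}_u^\vee) > \mca{D}(\mca{O}_{u'}^\vee)$ at every $\mca{O}_{u'}^\vee > \mca{O}_u^\vee$; combined with the tautological decomposition $\mca{D}(\mca{O}_u^\vee) = \max_{\mca{O}_{u'}^\vee \gest \mca{O}_u^\vee} \mca{D}^\flat(\mca{O}_{u'}^\vee)$ and $\mca{D}^\flat \lest \mca{D}$, this forces $\mca{D}^\flat(\mca{O}_u^\vee) = \mca{D}(\mca{O}_u^\vee)$. With $\tau = \mbm{1}$, the construction used in type $A$ transplants: pick $\rho$ with $\vartheta^\vee(\rho) = E_{u,\mbm{1}}$ and choose $\sigma \subseteq \rho \otimes \varepsilon|_{W(\Phi^{Q,n}_{x,\chi_i})}$ in the stratum where $\mca{O}_{\rm Spr}$ equals $\mca{O}_u^\vee$ exactly and realises $\mca{D}^\flat(\mca{O}_u^\vee)$. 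For the concluding claim on exceptional $G$, Theorem \ref{T:main} identifies $\mca{D}$ with $d_{BV,G}^{(n)}$, and the assumption that $\mca{O}_u \in {\rm Im}(d_{BV,G}^{(n)})$ supplies the strict-decrease property from the tables in \S \ref{SS:dBV-exc}, on which $d_{BV,G}^{(n)}$ behaves as an involution on its image.

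The main obstacle will be the achievability step in the last two paragraphs: producing a specific $\sigma$ that lies inside the prescribed restricted representation while realising the sharp bound $d_{BV,G}^{(n)}(\mca{O}_u^\vee)$. For type $A$ this reduces to Littlewood--Richardson bookkeeping and the compatibility between $j$-induction in Proposition \ref{P:j-induction}(i) and the maximum of Lemma \ref{lm:Ar}(i); for exceptional $G$ one must verify the strict-decrease condition at every $\mca{O}_u \in {\rm Im}(d_{BV,G}^{(n)})$ by case-by-case inspection using the PyCox-based tables of \S \ref{SS:exc}.
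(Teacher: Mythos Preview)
Your proposal is correct and follows the same architecture as the paper's proof: route $\WF^{\rm geo}$ through $\mca{D}$ via Lemma~\ref{lm:AA} and Theorem~\ref{T:main} for the upper bound, then for equality establish $\mca{D}^\flat = \mca{D}$ and feed this back into Proposition~\ref{main_prop}. The one place you diverge is the achievability step. You construct an explicit witness $\sigma = \boxtimes_j E_{\lambda_j}$ via Littlewood--Richardson combinatorics and then attempt to ``transplant'' this construction to part~(ii). The paper instead argues the reverse containment: any $\sigma$ realising $\mca{D}^\flat(\mca{O}_u^\vee)$ already lies in $\Res^W_{W(\Phi^{Q,n}_{x,\chi_i})}(\rho\otimes\varepsilon)$ for the $\rho$ with $\vartheta^\vee(\rho)=E_{u,\mbm{1}}$, simply because the defining equality forces $j(\vartheta^\vee(\sigma)\otimes\varepsilon)$ to have Springer orbit $\mca{O}_u^\vee$, and Frobenius reciprocity for $j$-induction then gives $\vartheta^\vee(\sigma)\otimes\varepsilon \subseteq \Res\, E_{u,\mbm{1}}$. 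This direction is lighter---it avoids the explicit tensor bookkeeping and makes the lower bound $\mca{D}^\flat \lest \WF^{\rm geo}$ in part~(ii) immediate, with no need to ``transplant'' anything or verify existence of a specific $\sigma$ inside the restriction. Your constructive route is sound but does more work than necessary.
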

\begin{proof} 
For (i), we have the chain of inequalities 
\begin{equation} \label{E:WF-D}
\text{\rm WF}^{\text{\rm geo}}(\text{\rm AZ}(\pi(s,u,\tau))) \lest \mca{D}(\mca{O}^\vee_u) \lest d^{(n)}_{BV, G}(\mca{O}^\vee_u),
\end{equation} 
where the first inequality follows from Lemma \ref{lm:AA} and the second one from Theorem \ref{T:main}.

For the second statement in (i), we assume $G$ is of type $A$. To show the equality holds, in view of Theorem \ref{T:main}, it suffices to show $\mca{D}(\mca{O}_u^\vee) \lest {\rm WF}^{\text{\rm geo}}(\text{\rm AZ}(\pi(s,u,\tau)))$. 
From the equality $\mca{D}=d_{BV,A}^{(n)}$ in Theorem \ref{T:main} and also Lemma \ref{lm:Ar}, we see
\begin{equation} \label{E:Db}
\mca{D}(\mca{O}_u^\vee)= \mca{D}^\flat(\mca{O}_u^\vee).
\end{equation}
For fixed $\mca{O}_u^\vee \in \mca{N}(\wt{G}^\vee)$, if $\sigma \in \Irr(W(\Phi_{x, \chi_i}^{Q,n}))$ satisfies
$$
\mathcal{O}_{\text{\rm Spr}}(j^{\vartheta^{\vee}(W)}_{\vartheta^{\vee}(W(\Phi^{Q,n}_{x,\chi_i}))}(\vartheta^\vee(\sigma) \otimes \varepsilon)) = \mca{O}_u^\vee
$$
as in the definition of $\mca{D}^\flat(\mca{O}^\vee_u)$, then clearly $\sigma \subseteq {\rm Res}_{W(\Phi_{x, \chi_i}^{Q,n})}^W (\vartheta^\vee(E_{u, \mbm{1}}) \otimes \varepsilon)$.  By the description of  $\Irr(W)_{s,u,\tau}$ in \eqref{eq:res}, we have $\vartheta^\vee(E_{u,\mbm{1}}) \in \Irr(W)_{s,u,\tau}$ (since we must have $\tau = \mbm{1}$ in this case); thus, every such $\sigma$ 
belongs to the set in the right hand side of the equality of computing ${\rm WF}^{\text{\rm geo}}(\text{\rm AZ}(\pi(s,u,\tau)))$, as given in Proposition \ref{main_prop}. This shows $\mca{D}^\flat(\mca{O}_u^\vee) \lest {\rm WF}^{\text{\rm geo}}(\text{\rm AZ}(\pi(s,u,\tau)))$ and completes the proof of the second statement of (i) in view of \eqref{E:Db}.

For (ii), we have the chain of inequalities
$$\mca{D}^\flat (\mca{O}^\vee_u) \lest \text{\rm WF}^{\text{\rm geo}}(\text{\rm AZ}(\pi(s,u,\mbm{1}))) \lest \mca{D}(\mca{O}^\vee_u),$$
where the first inequality follows from Proposition \ref{main_prop}. The assumption in (ii) implies that $\mca{D}^\flat (\mca{O}^\vee_u) = \mca{D}(\mca{O}^\vee_u)$, which gives the chain of equalities in (ii). For the second assertion in (ii), let $G$ be of exceptional type with the assumption installed. In view of the first statement in (ii) and Theorem \ref{T:main}, it suffices to show that for such $\mca{O}_u^\vee$ one has
$$d_{BV,G}^{(n)}(\mca{O}_u^\vee) < d_{BV,G}^{(n)}(\mca{O}_{u'}^\vee)$$
for every $\mca{O}_{u'}^\vee > \mca{O}_u^\vee$. However, this follows from a direct inspection of the definition of $d_{BV,G}^{(n)}$ from \cite[Appendix]{GLLS}. This completes the proof of (ii).
\end{proof}
Note that the argument for the second statement in Corollary \ref{C:key}(ii) also applies for type $A$. That is, if $G$ is of type $A$ and $\mfr{p} \in {\rm Im}(d_{BV,G}^{(n)})$, then $d_{BV,G}^{(n)}(\mfr{p}) < d_{BV,G}^{(n)}(\mfr{p}')$
for every $\mfr{p}' > \mfr{p}$. Indeed, from the combinatorial definition of $d_{com}^{(\nn)}$ and thus $d_{BV,G}^{(n)}$, it is easy to see that 
$$(d_{com}^{(\nn)})^2(\mfr{p}') \gest \mfr{p}' \text{ and } (d_{com}^{(\nn)})^3(\mfr{p}') = d_{com}^{(\nn)}(\mfr{p}')$$ 
for all partition $\mfr{p}'$. This gives $(d_{com}^{(\nn)})^2(\mfr{p}') \gest \mfr{p}' > \mfr{p}$. Then, we necessarily have
$$d_{com}^{(\nn)}(\mfr{p}') < d_{com}^{(\nn)}(\mfr{p}).$$
Otherwise, it gives the equality $d_{com}^{(\nn)}(\mfr{p}') = d_{com}^{(\nn)}(\mfr{p})$ and implies that $(d_{com}^{(\nn)})^2(\mfr{p}') = (d_{com}^{(\nn)})^2(\mfr{p}) = \mfr{p}$, since $\mfr{p} \in {\rm Im}(d_{com}^{(\nn)})$; this is a contradiction.
\vskip 10pt

We have shown that $\text{\rm WF}^{\rm geo}(\text{\rm AZ}(\pi(s,u,\tau))) \lest d^{(n)}_{BV,G}(\mca{O}^\vee_u)$ for positive real $s$. The equality holds for linear groups by \cite{CMBO21}; for covers, it holds for the theta representations by \cite{KOW}, for type $A$, and for certain orbits $\mca{O}_u^\vee$ in the case of exceptional groups by Corollary \ref{C:key}. However, the equality is not always attained for general Iwahori-spherical representations of $\wt{G}$, even just for positive real $s$.

\begin{eg} \label{E:c-eg}
First, consider a primitive cover $\wt{G}=\wt{\Sp}_6^{(3)}$; we get $\wt{G}^\vee = \SO_7$. Let $\mca{O}_u^\vee$ be associated with the partition $\mfr{p}:=(3,3,1)$. Let $s \in \wt{T}^\vee$ be positive real such that $A_{s,u}=A_{\mathcal{O}_u}$.   Assume $\pi(s, u, \tau)$ has nontrivial $\tau$. Then $d^{(3)}_{BV, \Sp_6}(\mca{O}_u^\vee)=(6)=\mca{D}(\mca{O}^\vee_u)$  but $\text{\rm WF}^{\rm geo}(\text{\rm AZ}(\pi(s,u,\tau)))=(4,2)$.

Second, for a primitive cover $\wt{G}_2^{(2)}$ of $G_2$. Consider $\pi(s, u, \mbm{1})$ with positive real $s$ and $\mca{O}_u^\vee=A_1$, and $\pi(s, u, \mbm{1})^I$ is an one-dimensional module over $\mca{H}_0$. Then $d_{BV, G_2}^{(2)}(\mca{O}_u^\vee) = G_2 = \mca{D}(\mca{O}_u^\vee)$ but $\text{\rm WF}^{\rm geo}(\text{\rm AZ}(\pi(s,u,\mbm{1})))=G_2(a_1)$.
\end{eg}

%\textcolor{red}{
%In first example, $A_u=\Z/2\Z$, so no contradiction with sharpness of upper bound conj. For second, have $A_u=1$, but modulo $Z(\wt{G})$, so still no contradiction yet. Need to look at $G_2, F_4, E_8$, and an orbit $\mca{O}_u$. If $A_u=1$, then must have equality for wavefront set?? My example for $u=A_1$ in $G_2$ shows that in [GLLS], we need to impose temperedness!
%}

%%
\subsection{Lower bound and a speculation}
First, we show that for positive real $s$ there is always a coarse lower bound of the geometric wavefront set $\WF^{\rm geo}(\text{AZ}(\pi(s,u,\tau)))$
given by that of theta representations. Indeed, for every $\sigma \in \Irr(W(\Phi_{x,\chi_i}^{Q,n}))$, it follows from \cite[Proposition 4.3]{AcAu07} that 
$$\mathcal{O}_\text{\rm Spr}(j^{W_x}_{W(\Phi^{Q,n}_{x,\chi_i})}(\sigma^{\rm spe})) \gest \mathcal{O}_\text{\rm Spr}(j^{W_x}_{W(\Phi^{Q,n}_{x,\chi_i})} \varepsilon).$$
Notice that if $\text{AZ}(\pi(s,u,\tau))$ is a theta representation (i.e., $\pi(s,u,\tau)$ is a covering Steinberg representation), then $\Irr(W)_{s,u,\tau}=\set{\varepsilon}$ is a singleton set,  where $\varepsilon$ is just the sign representation of $W(\Phi^{Q,n}_{x,\chi_i})$, as shown in \cite{KOW}. Thus the theta representation has the minimal geometric wavefront set among the representations considered in Proposition \ref{main_prop}. This is a generalization of the fact that theta representation has the least Whittaker dimension among all representations in $\Irr_\iota(\wt{G})^I$, see \cite[Corollary 9.9]{GGK1}.

In general, one has the following speculation, which is supported by the numerical evidence so far.
\begin{conj} \label{C:1}
Let $\pi(s, u, \tau) \in \Irr_\iota(\ol{G})^I$ be such that $s$ is positive real. Then the following holds:
\begin{enumerate}
\item[(i)] either $\WF^{\text{\rm geo}}(\text{\rm AZ}(\pi(s,u,\tau)))= d^{(n)}_{BV, G}(\mca{O}_u^\vee)$, or
\item[(ii)] $\WF^{\text{\rm geo}}(\text{\rm AZ}(\pi(s,u,\tau)))$ is equal to a maximal orbit in the set 
$$\set{\mca{O} \in {\rm Im}(d_{BV,G}^{(n)}): \ \mca{O} < d^{(n)}_{BV, G}(\mca{O}_u^\vee)};$$
\end{enumerate}
that is, $\WF^{\text{\rm geo}}(\text{\rm AZ}(\pi(s,u,\tau)))$ is at most ``one-step" smaller from $d^{(n)}_{BV, G}(\mca{O}_u^\vee)$ within the image of $d_{BV,G}^{(n)}$.
\end{conj}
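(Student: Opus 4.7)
The plan is to combine the explicit formula of Proposition \ref{main_prop} with a refined analysis of $\Irr(W)_{s,u,\tau}$ stratified by the underlying nilpotent orbits, leveraging the structural results of Theorem \ref{T:main}. By \eqref{eq:res}, every $\rho \in \Irr(W)_{s,u,\tau}$ satisfies $\vartheta^\vee(\rho) = E_{u', \eta}$ for some $\mca{O}_{u'}^\vee \gest \mca{O}_u^\vee$ and some geometric $\eta \in \Irr(A_{\mca{O}_{u'}^\vee})$. First I would partition $\Irr(W)_{s,u,\tau}$ accordingly and let $\mca{O}_{u^*}^\vee$ denote the largest orbit appearing. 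The first step is to establish that $\WF^{\rm geo}({\rm AZ}(\pi(s,u,\tau))) \lest d^{(n)}_{BV,G}(\mca{O}_{u^*}^\vee)$ by applying Lemma \ref{lm:AA} and Theorem \ref{T:main} with $\mca{O}_{u^*}^\vee$ in place of $\mca{O}_u^\vee$.

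Next, I would identify precisely when this bound is attained. The expected criterion is that equality holds exactly when $\vartheta^\vee(E_{u^*, \mbm{1}}) \in \Irr(W)_{s,u,\tau}$, i.e., when the trivial local system on $\mca{O}_{u^*}^\vee$ contributes. This would generalize the second half of Corollary \ref{C:key}(i): the argument there, which exploited that $\sigma \subseteq {\rm Res}^W_{W(\Phi^{Q,n}_{x, \chi_i})}(\vartheta^\vee(E_{u, \mbm{1}}) \otimes \varepsilon)$ realizes every $\sigma$ appearing in the definition of $\mca{D}^\flat(\mca{O}_u^\vee)$, should go through verbatim with $\mca{O}_{u^*}^\vee$ in place of $\mca{O}_u^\vee$, provided one establishes the refined equality $\mca{D}^\flat = \mca{D} = d^{(n)}_{BV,G}$ for all classical and exceptional types. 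The latter can be attempted by a type-by-type analysis modelled on \S \ref{SS:Pf-cla}.

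The final and hardest step is to show that when the trivial local system is absent from the maximal piece, the resulting wavefront orbit nonetheless lands in ${\rm Im}(d^{(n)}_{BV,G})$ and is maximal among such orbits strictly below $d^{(n)}_{BV,G}(\mca{O}_u^\vee)$. For type $A$, this is transparent from the identities $(d_{com}^{(\nn)})^2 \gest {\rm id}$ and $(d_{com}^{(\nn)})^3 = d_{com}^{(\nn)}$ noted after Corollary \ref{C:key}: any orbit attainable via Proposition \ref{main_prop} has the form $d_{com}^{(\nn)}(\mfr{p}')$ for some $\mfr{p}' \gest \mca{O}_u^\vee$, and strict order-reversal on the image forces the one-step gap. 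For classical types $B, C, D$, the approach is to extend the case analysis of \S \ref{SS:Pf-cla}, tracking how the $X$-collapse and the $+/-$ operations interact with the restriction of $E_{u', \eta} \otimes \varepsilon$ for non-trivial $\eta$, and showing that the resulting partition equals $d^{(n)}_{BV,G}(\mca{O}_{u'}^\vee)$ for some canonical $\mca{O}_{u'}^\vee$ one step above $\mca{O}_u^\vee$ in the image. For exceptional types, exhaustive verification via PyCox, extending the computations of \S \ref{SS:exc}, should suffice.

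The main obstacle is the image-landing claim in the third step: a priori, the saturation of a truncated induction need not lie in ${\rm Im}(d^{(n)}_{BV,G})$. A promising intermediate target is to prove that, for fixed $s$ and $u$, the set $\{\WF^{\rm geo}({\rm AZ}(\pi(s,u,\tau)))\}_\tau$ is totally ordered and consists precisely of $\{d^{(n)}_{BV,G}(\mca{O}^\vee):\ \mca{O}^\vee \text{ arises from some component-group contribution attached to } (s,u,\tau)\}$. This reformulation would recast the conjecture as a purely structural assertion about $d^{(n)}_{BV,G}$ and its image, and thereby reduce the problem to combinatorics, bypassing direct analysis of each $\tau$.
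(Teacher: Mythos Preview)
The statement you are attempting to prove is labelled \textbf{Conjecture} \ref{C:1} in the paper, and the paper offers no proof whatsoever; it is introduced merely as ``the following speculation, which is supported by the numerical evidence so far.'' So there is no paper proof to compare against, and what you have written is a research plan toward an open problem rather than a proof.

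That said, the plan has a genuine directional error. You let $\mca{O}_{u^*}^\vee$ be the \emph{largest} orbit appearing among the $\mca{O}_{u'}^\vee$'s arising from $\Irr(W)_{s,u,\tau}$ and claim this yields the upper bound $\WF^{\rm geo}({\rm AZ}(\pi(s,u,\tau)))\lest d_{BV,G}^{(n)}(\mca{O}_{u^*}^\vee)$. This is backwards: since $d_{BV,G}^{(n)}$ is order-reversing, the largest $\mca{O}_{u^*}^\vee$ produces the \emph{smallest} value $d_{BV,G}^{(n)}(\mca{O}_{u^*}^\vee)$, while the $\sigma$'s coming from $\rho$'s attached to the minimal orbit $\mca{O}_u^\vee$ (which always appears, e.g.\ with $\eta=\tau$ when $A_{s,u}=A_{\mca{O}_u}$) can and do produce contributions up to $d_{BV,G}^{(n)}(\mca{O}_u^\vee)$. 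So your proposed inequality fails in general. The relevant question for the conjecture is not which is the largest orbit in $\Irr(W)_{s,u,\tau}$, but rather whether the \emph{trivial} local system on the \emph{minimal} orbit $\mca{O}_u^\vee$ contributes (governed by $\dim\Hom_{A_{s,u}}(\tau,\mbm{1})$); when it does not, one must control the contributions of the nontrivial $E_{u,\eta}$'s on $\mca{O}_u^\vee$ directly, and this is precisely what Example~\ref{E:c-eg} illustrates and what remains open.

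Two further points. First, your step~2 relies on establishing $\mca{D}^\flat=\mca{D}=d_{BV,G}^{(n)}$ for all classical types, but the paper proves only $\mca{D}\lest d_{BV,G}^{(n)}$ for types $B,C,D$ (Theorem~\ref{T:main}); the equality is shown only for type~$A$ and exceptional types, and there is no reason to expect the argument of \S\ref{SS:Pf-cla} to upgrade to equality without new input. Second, your type-$A$ discussion in step~3 is moot: by Corollary~\ref{C:key}(i) the equality in part~(i) of the conjecture \emph{always} holds in type~$A$, so the ``one-step'' alternative~(ii) never occurs there and there is nothing to prove. The substantive content of the conjecture lies entirely in types $B,C,D$ and certain exceptional cases with nontrivial $\tau$, where neither the identities $(d_{com}^{(\nn)})^3=d_{com}^{(\nn)}$ nor the partition combinatorics of \S\ref{SS:Pf-cla} give any obvious handle on why the wavefront should land in ${\rm Im}(d_{BV,G}^{(n)})$.
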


%%%
\section{The equality for certain $\pi_S$}
In this section, we generalize the results of \cite{KOW} concerning ${\rm WF}^{\rm geo}(\Theta)$ in another direction. Following the exposition and notation from \cite{GaTs}, we consider $\nu \in X\otimes \R$ that satisfies the following:
\begin{enumerate}
\item[--] $\nu$ is regular, that is, its stabilizer subgroup of $W$ is trivial;
\item[--] the set $\Phi(\nu):=\set{\alpha\in \Phi: \nu(\alpha_{Q,n}^\vee) =1}$ is a subset of $\Delta$.
\end{enumerate}
Taking $\pi^\dag$ to be an unramified distinguished genuine representation of $\wt{T}$, we have a regular unramified genuine principal series $I(\nu):=I(\pi^\dag, \nu)$ of $\wt{G}$.
One has
\begin{equation} \label{E:Rod}
I(\nu)^{\rm ss} = \bigoplus_{S \subseteq \Phi(\nu)} \pi_S,
\end{equation}
where the left hand side denotes the semisimplification of $I(\nu)$. The decomposition is multiplicity-free and the irreducible constituent $\pi_S$ is characterized by its Jacquet module, see \cite{Rod4} and \cite[\S 3]{Ga6}. For example, if $\Phi(\nu) =\Delta$, then $\pi_\Delta = \Theta(\nu)$ is an unramified theta representation and $\pi_\emptyset$ is a covering Steinberg representation.

For every $S \subseteq \Phi(\nu)$, let $\Phi(S) \subseteq \Phi$ be the root subsystem with simple roots. Denote by
$$W(S) \subseteq W$$
the subgroup generated by simple reflections from $S$. Let $\mbf{M}_S \subseteq \mbf{G}$ be the Levi subgroup associated with $S$. Denote 
$$W_\nu^S:=\text{the integral Weyl subgroup of $W(S)$ generated by } \set{\alpha \in \Phi(S): \ \angb{\alpha^\vee}{\nu} \in \Z}.$$
Let $\varepsilon_\nu^S$ be the sign character of $W_\nu^S$. 

With all the above notation, we expect that 
\begin{equation} \label{E:piS1}
\WF^{\rm geo}(\pi_S)= \mca{O}_{\rm Spr}(j_{W_\nu^S}^W(\varepsilon_\nu^S)).
\end{equation}
By using properties of $j$-induction and the induction of orbits, as noted in \cite{GaTs}, we get
$$\mca{O}_{\rm Spr}(j_{W_\nu^S}^W(\varepsilon_\nu^S))
 = {\rm Ind}_{\mbf{M}_S}^{ \mbf{G}}( \mca{O}_{\rm Spr}(j_{W_\nu^S}^{W(S)}(\varepsilon_\nu^S))).
$$

However, from \cite{GLLS} we know 
$$\mca{O}_{\rm Spr}(j_{W_\nu^S}^{W(S)}(\varepsilon_\nu^S)) = d_{BV, M_S}^{(n)}(\mca{O}^{\vee,S}_{\rm reg})),$$
where we denote by $\mca{O}_{\rm reg}^{\vee, S}$ the regular orbit of $\wt{M}_S^\vee$. 
Also, for every orbit $\mca{O}^\vee$ of $\wt{M}_S^\vee$, the covering Barbasch--Vogan duality intertwins the induction and saturation of orbits as in 
$$d_{BV, G}^{(n)} \circ {\rm Sat}_{\wt{M}_S^\vee}^{\wt{G}^\vee}(\mca{O}^\vee) = {\rm Ind}_{\mbf{M}_S}^\mbf{G} \circ d_{BV, M_S}^{(n)} (\mca{O}^\vee),$$
see \cite[Theorem 1.1]{GLLS}.
We have $\mca{O}(\phi_{\pi_{\Phi(\nu)-S}}) = {\rm Sat}_{\wt{M}_S^\vee}^{\wt{G}^\vee}( \mca{O}^{\vee, S}_{\rm reg})$ and also 
${\rm AZ}(\pi_S) = \pi_{\Phi(\nu) - S}$, which gives
$$\mca{O}(\phi_{{\rm AZ}(\pi_S)}) = {\rm Sat}_{\wt{M}_S^\vee}^{\wt{G}^\vee}(\mca{O}_{\rm reg}^{\vee,S}).$$
Combining all the above, we see that 
\begin{equation} \label{E:j=d}
\mca{O}_{\rm Spr}(j_{W_\nu^S}^W(\varepsilon_\nu^S)) =  d_{BV, G}^{(n)}(\mca{O}_{\rm reg}^{\vee,S}) =d_{BV, G}^{(n)}(\mca{O}(\phi_{{\rm AZ}(\pi_S)})),
\end{equation}
where henceforth, by abuse of notation, we also write $\mca{O}_{\rm reg}^{\vee,S}$ for the saturated orbit ${\rm Sat}_{\wt{M}_S^\vee}^{\wt{G}^\vee}(\mca{O}_{\rm reg}^{\vee,S})$ for $\wt{G}^\vee$, and there should be no risk of confusion from the context.
In particular, we see that \eqref{E:piS1} is equivalent to
\begin{equation} \label{E:piS2}
\WF^{\rm geo}(\pi_S) = d_{BV, G}^{(n)}(\mca{O}_{\rm reg}^{\vee,S}).
\end{equation}

The equality \eqref{E:piS1} (or \eqref{E:piS2}) was known in the following cases:
\begin{enumerate}
\item[$\bullet$] For linear group $G$ (i.e., $n=1$), the equality becomes 
$$\WF^{\rm geo}(\pi_S) = {\rm Ind}_{\mbf{M}_S}^\mbf{G}(0)= d_{BV, G}^{(1)}(\mca{O}_{\rm reg}^{\vee,S})$$
and was proved by M\oe glin and Waldspurger in \cite[Proposition II.1.3]{MW87}.
\item[$\bullet$] For general covering $\wt{G}$, since $(\pi_\emptyset)^I$ always contains the sign representation of the finite Weyl-group algebra inside $\mca{H}_0$, it follows from \cite{GGK1} that $\pi_\emptyset$ is generic. Thus, \eqref{E:piS2} holds for $\pi_\emptyset$.  If $S=\Delta$ (and thus necessarily $\Phi(\nu)=\Delta$), then $\pi_\Delta$ is the theta representation, and in this case \eqref{E:piS2} follows from the work \cite{KOW}.
\end{enumerate}

For $S \subseteq \Delta$, we write $e(S)$ for the maximum exponent of the Weyl group $W(S)$, i.e., it is the maximum of all the exponents $e(S_i)$ associated with the connected components $S_i$ of $S$. We identify $S$ with the root system it represents.

\begin{dfn}\label{D:attm}
Let $\wt{G}^{(n)}$ be a cover of an almost simple $G$.
\begin{enumerate}
\item[(i)] For $G$ of classical type, a set $S \subseteq \Delta$ is called $\wt{G}^{(n)}$-autotomous if there exists $\beta \in \Delta -S$ and a connected component $S_j \subseteq S \sqcup \set{\beta}$ such that $\nn \gest e(S_j) + 1$.
\item[(ii)] For $G$ of exceptional type, a set $S \subseteq \Delta$ is called $\wt{G}^{(n)}$-autotomous if 
\begin{enumerate}
\item[$\bullet$] $S = H + A_k$ with $k \gest 0$, $\val{H} \gest k+1$;
\item[$\bullet$]  $H+ A_{k+1} \subseteq \Delta$ also;
\item[$\bullet$] $\nn \gest k+2$.
\end{enumerate}
\end{enumerate}
Here $A_k$ (in contrast with $\tilde{A}_k$) means it is associated with long roots in $\Delta$, and for $G$ of simply-laced type every root is considered as long. 
\end{dfn}
As concrete examples, the sets $\tilde{A}_1, \tilde{A}_2 \subset \Delta$ are both $\wt{F}_4^{(2)}$-autotomous for a primitive cover. We also remark that the condition in Definition \ref{D:attm}(i), if applied to $G$ of exceptional type, gives weaker condition than (ii).

\begin{thm} \label{T:main2}
Assume $p$ is sufficiently large. 
Let $S \subseteq \Phi(\nu)$. Assume $S$ is not $\wt{G}^{(n)}$-autotomous. If $G$ is of exceptional type, we further assume that $\nn \lest e(S)$. 
Then we have the equalities 
$$\WF^{\rm geo}(\pi_S)= \mca{O}_{\rm Spr}(j_{W_\nu^S}^W(\varepsilon_\nu^S))=  d_{BV, G}^{(n)}(\mca{O}_{\rm reg}^{\vee,S}) = d_{BV, G}^{(n)}(\mca{O}(\phi_{{\rm AZ}(\pi_S)})).$$
\end{thm}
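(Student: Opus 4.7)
The last two equalities in the statement hold by \eqref{E:j=d}, so the substance of the theorem is the single equality $\WF^{\rm geo}(\pi_S) = d_{BV, G}^{(n)}(\mca{O}_{\rm reg}^{\vee, S})$. For the upper bound, I would write $\pi_S = {\rm AZ}(\pi_{\Phi(\nu) - S})$ and apply Corollary \ref{C:key}(i) to $\pi_{\Phi(\nu) - S} = \pi(s, u, \tau)$, whose Satake parameter $s$ is positive real by the standing choice of unramified distinguished genuine character together with the regularity of $\nu$, and whose unipotent orbit satisfies $\mca{O}_u^\vee = \mca{O}_{\rm reg}^{\vee, S}$, as noted in the discussion just before \eqref{E:j=d}.

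For the lower bound I would apply Proposition \ref{main_prop} to this same $\pi_{\Phi(\nu) - S}$ and construct an explicit witness triple $(x, \chi_i, \sigma)$ realizing the target orbit. My choice is $\chi_i = \chi_0$ together with $x \in \mca{C}$ chosen so that the dual pseudo-Levi subsystem $\vartheta^\vee(\Phi_{x, \chi_0}^{Q,n}) \subseteq \Phi^\vee$ is the smallest enlargement of $\Phi(S)^\vee$ compatible with the component-count constraints of Lemma \ref{L:Phixchi}; I then take $\sigma$ to be the sign character $\varepsilon$ of $W(\Phi_{x, \chi_0}^{Q,n})$. The representation $\vartheta^\vee(E_{u, \mbm{1}})$ lies in $\Irr(W)_{s, u, \tau}$ by the bullet-points following \eqref{eq:res}, and since $E_{u, \mbm{1}}$ for the regular orbit of a Levi corresponds essentially to $j^W_{W(S)}(\mbm{1})$, Frobenius reciprocity shows that $\sigma$ is a constituent of the relevant restriction. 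The computation of $\mca{O}_{\rm Spr}^{\mbf{L}_x^F}(j^{W_x}_{W(\Phi^{Q,n}_{x,\chi_0})}(\sigma^{\rm spe}))$ inside $\mbf{L}_x^F$, followed by saturation to $\mbf{G}(\wt{F})$, should then match ${\rm Ind}_{\mbf{M}_S}^{\mbf{G}}(d_{BV,M_S}^{(n)}(\mca{O}_{\rm reg}^{\vee,S})) = d_{BV,G}^{(n)}(\mca{O}_{\rm reg}^{\vee,S})$, the last equality being the induction-saturation intertwining of \cite[Theorem 1.1]{GLLS}. In the extreme case $S = \Delta$ this recovers the theta-representation argument of \cite{KOW}, and more generally the reduction to the Levi $M_S$ is the key structural simplification.

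The main obstacle is verifying that the witness $(x, \chi_0, \sigma)$ above produces $d_{BV, G}^{(n)}(\mca{O}_{\rm reg}^{\vee, S})$ exactly, rather than a strictly smaller orbit after the classical-type collapse maps of Section \ref{SS:dBVn}. This is precisely where the non-autotomous hypothesis enters: if $S$ were autotomous, then an adjacent simple root $\beta \in \Delta - S$ combined with the bound $\nn \gest e(S_j) + 1$ would force $W(\Phi_{x, \chi_0}^{Q,n})$ to absorb an extra simple reflection into one of its irreducible components, pushing the $j$-induction image to a strictly smaller orbit after the $(\cdot)_B, (\cdot)_C, (\cdot)_D$ collapse. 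For exceptional $G$, the bound $\nn \lest e(S)$ plays the analogous protective role against the explicit $d_{BV,G}^{(n)}$ tables of \cite[Appendix]{GLLS}. The proof therefore reduces to a finite combinatorial verification, carried out type-by-type, that parallels and genuinely strengthens the theta-representation check of \cite{KOW}; the delicate point throughout is tracking how the collapse operations interact with the induction-saturation diagram for $d_{BV, G}^{(n)}$ beyond the regular case.
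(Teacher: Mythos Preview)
Your upper bound via Corollary~\ref{C:key}(i) is fine in principle, but the paper obtains both bounds by a different and more direct route that bypasses the machinery of Proposition~\ref{main_prop} entirely. Since $({\rm Ind}_{\wt{P}_S}^{\wt{G}} \Theta(\wt{M}_S))^{\rm ss} = \bigoplus_{S' \supseteq S} \pi_{S'}$ and, by M\oe glin--Waldspurger together with \cite{KOW} applied on the \emph{Levi} $\wt{M}_S$, the induced representation has geometric wavefront set exactly $d_{BV,G}^{(n)}(\mca{O}_{\rm reg}^{\vee,S})$, one gets the upper bound $\WF^{\rm geo}(\pi_{S'}) \lest d_{BV,G}^{(n)}(\mca{O}_{\rm reg}^{\vee,S'})$ for every $S' \supseteq S$. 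The lower bound for $\pi_S$ then follows by elimination: the wavefront of the induced representation must be attained by some constituent, and Proposition~\ref{P:piSkey} is the purely orbit-theoretic strict inequality $d_{BV,G}^{(n)}(\mca{O}_{\rm reg}^{\vee,S'}) < d_{BV,G}^{(n)}(\mca{O}_{\rm reg}^{\vee,S})$ for every $S' \supsetneq S$, which rules out all the other $\pi_{S'}$. The non-autotomous hypothesis and the bound $\nn \lest e(S)$ enter only in this last step, and the classical cases reduce to an elementary monotonicity property of $d_{com}^{(z)}$ (Lemma~\ref{L:key01}).

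Your lower-bound strategy, by contrast, asks for a KOW-type Kawanaka computation inside the full group $\wt{G}$, tailored to the non-regular saturated orbit $\mca{O}_{\rm reg}^{\vee,S}$. The phrase ``should then match'' marks the gap: you would need, type by type, to exhibit $x \in \mca{C}$ with $\Phi_{x,\chi_0}^{Q,n}$ of a prescribed shape and then verify that the resulting saturated Springer orbit hits $d_{BV,G}^{(n)}(\mca{O}_{\rm reg}^{\vee,S})$ on the nose after the $B/C/D$ collapse maps. Even granting your Frobenius-reciprocity claim that $\varepsilon$ occurs in the relevant restriction, nothing in the proposal shows the witness attains the target rather than something strictly smaller; the non-autotomous hypothesis is invoked only to explain how things might fail, not to certify that they do not. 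The paper avoids all of this by pushing the hard analytic input down to the Levi, where \cite{KOW} already gives the answer, and leaving only a comparison of orbits.
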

The remaining of this section is devoted to a proof of this.

\subsection{A reduction}
We note that for every $S \subseteq \Phi(\nu)$, one has
$$\left( {\rm Ind}_{\wt{P}_S}^{\wt{G}} \Theta(\wt{M}_S) \right)^{\rm s.s.} = \bigoplus_{\substack{S' \subseteq \Phi(\nu) \\ S' \supseteq S}} \pi_{S'},$$
and by M\oe glin--Waldspurger \cite[(9)]{MW87}, we get
$$\WF^{\rm geo}({\rm Ind}_{\wt{P}_S}^{\wt{G}} \Theta(\wt{M}_S)) = {\rm Ind}_{\mbf{M}_S}^{\mbf{G}}(\WF^{\rm geo}(\Theta(\wt{M}_S))) = d_{BV, G}^{(n)}(\mca{O}_{\rm reg}^{\vee,S}).$$ 
This gives that for all $S' \supseteq S$, one has
$$\WF^{\rm geo}(\pi_{S'}) \lest d_{BV, G}^{(n)}(\mca{O}_{\rm reg}^{\vee,S'}).$$
Thus, to prove Theorem \ref{T:main2}, it suffices to show the following:

\begin{prop} \label{P:piSkey}
Let $S \subseteq \Phi(\nu) \subseteq \Delta$. Assume $S$ is not $\wt{G}^{(n)}$-autotomous. If $G$ is of exceptional type, we further assume that $\nn \lest e(S)$. Then for every $S' \supsetneq S$, one has
$$d_{BV, G}^{(n)}(\mca{O}_{\rm reg}^{\vee,S'}) < d_{BV, G}^{(n)}(\mca{O}_{\rm reg}^{\vee,S}).$$
\end{prop}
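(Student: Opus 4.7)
The plan is to reduce to the one-step case $S' = S \cup \{\beta\}$ for a single $\beta \in \Delta - S$, and then do a type-by-type analysis. For the reduction, given any $S' \supsetneq S$, pick $\beta \in S' - S$; then $S \cup \{\beta\} \subseteq S'$ gives $\mca{O}_{\rm reg}^{\vee, S \cup \{\beta\}} \lest \mca{O}_{\rm reg}^{\vee, S'}$, and the order-reversing property of $d_{BV, G}^{(n)}$ from \cite[Theorem 1.1]{GLLS} yields $d_{BV, G}^{(n)}(\mca{O}_{\rm reg}^{\vee, S'}) \lest d_{BV, G}^{(n)}(\mca{O}_{\rm reg}^{\vee, S \cup \{\beta\}})$. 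Thus it suffices to prove strict inequality in the one-step case, and the non-autotomy hypothesis on $S$ is still what we need (it is imposed on $S$, not $S'$).

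For type $A_r$, the saturated regular orbit $\mca{O}_{\rm reg}^{\vee, S}$ corresponds to the partition $\mfr{p}(S)$ whose parts are the $GL$-factor sizes of $M_S$, and $d_{BV, G}^{(n)} = d_{com}^{(\nn)}$. Adding $\beta$ merges two adjacent parts $a, b$ of $\mfr{p}(S)$ into one part $a + b$, so the claim reduces, by the term-wise additivity of $d_{com}^{(\nn)}$, to
$$\mfr{s}(a; \nn) + \mfr{s}(b; \nn) > \mfr{s}(a + b; \nn) \text{ in dominance}.$$
Comparing first entries shows this dominance is strict exactly when $a + b > \nn$ (since both sides have total $a + b$ and the first entry on the left is $\min(a,\nn) + \min(b,\nn) > \nn = \mfr{s}(a+b;\nn)_1$ in that case). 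Since the merged component $S_j \subseteq S \cup \{\beta\}$ containing $\beta$ is of type $A_{a+b-1}$, one has $a + b - 1 = e(S_j)$; non-autotomy of $S$ forces $e(S_j) \gest \nn$ for every component, hence $a + b > \nn$, as desired.

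For types $B_r, C_r, D_r$, the same scheme applies but $\mfr{p}(S)$ now carries a distinguished $B$-, $C$-, or $D$-type part coming from the non-$A$ Levi factor of $M_S$, and $d_{BV, G}^{(n)}$ composes $d_{com}^{(\nn)}$ with the $X$-collapse and the operations $(\cdot)^{\pm}$ from \eqref{E:dBV-B}--\eqref{E:dBV-D}. Depending on how $\beta$ sits relative to the existing components of $S$ (two $A$-parts merge, an $A$-part attaches to the distinguished classical part, etc.), adding $\beta$ triggers one of several partition merges; in each case the non-autotomy condition in Definition \ref{D:attm}(i) translates directly into $e(S_j) + 1 > \nn$ for the merged component $S_j$, which yields strict inequality at the $d_{com}^{(\nn)}$-level. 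The monotonicity of the collapses $(\cdot)_X$ and of the $(\cdot)^{\pm}$ operations then transports this strict inequality to $d_{BV, G}^{(n)}$.

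For exceptional types, the extra assumption $\nn \lest e(S)$ bounds $\nn$ (by $e(G)$) and renders the set of relevant configurations $(S, \beta)$ finite. Definition \ref{D:attm}(ii) singles out precisely the problematic configurations $S = H + A_k$ with $H + A_{k+1} \subseteq \Delta$ and $\nn \gest k + 2$ where equality could occur in the merging of the $A_k$-component with $\beta$; under its negation, the claim becomes a finite case check against the explicit formulas for $d_{BV, G}^{(n)}$ in \cite[Appendix]{GLLS}. The main obstacle will be this exceptional case analysis: unlike the uniform $d_{com}^{(\nn)}$ in the classical setting, the exceptional $d_{BV, G}^{(n)}$ is tabulated rather than given by a single combinatorial recipe, so each configuration has to be verified individually. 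A computer-assisted enumeration along the lines of \S \ref{SS:exc} would streamline the verification considerably.
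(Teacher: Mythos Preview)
Your overall architecture matches the paper's: reduce to the one-step case $S' = S \sqcup \{\beta\}$ via the order-reversing property of $d_{BV,G}^{(n)}$, then argue type by type. The type $A$ argument and the exceptional-type finite check are both in line with what the paper does.

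The gap is in types $B_r, C_r, D_r$. Your sentence ``The monotonicity of the collapses $(\cdot)_X$ and of the $(\cdot)^\pm$ operations then transports this strict inequality to $d_{BV,G}^{(n)}$'' is not justified: the $X$-collapse is only \emph{weakly} order-preserving, so a strict inequality $d_{com}^{(\nn)}(\mca{O}_{\rm reg}^{\vee, S'}) < d_{com}^{(\nn)}(\mca{O}_{\rm reg}^{\vee, S})$ at the partition level can become an equality after applying $(\cdot)_X$ (for instance $(2,2) < (3,1)$ but $(2,2)_C = (3,1)_C = (2,2)$). The paper handles this by tracking the leading numeral $L^\sharp$ and its parity in each subcase. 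For example, in type $B_r$ with $\nn$ odd and the merge occurring among the $A$-parts, one shows $L^\sharp(d_{com}^{(\nn)}(\mca{O}_{\rm reg}^{\vee,S'})) + 2 \lest L^\sharp(d_{com}^{(\nn)}(\mca{O}_{\rm reg}^{\vee,S}))$; when the merge involves the classical tail, one uses that the leading numeral has a specific parity so that a gap of $1$ already survives $(\cdot)^+{}_B$. The type $C_r$ even-$\nn$ case requires a further split according to whether $2p_s$ or $p_{s-1}$ exceeds $\nn/2$. None of this is automatic from monotonicity alone. You also omit the type $D_r$ very-even labelling issue, which the paper dispatches by noting that strict inequality at the partition level already forces strict inequality between any choices of labels $\rm I, II$.
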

% The remaining of this section is devoted to its proof.

The condition on $\nn$ in Proposition \ref{P:piSkey} is closely related to the non-genericity of theta representations on covering Levi subgroups. Using \cite{KOW, GLT25}, we first explicate the condition  $\WF^{\rm geo}(\Theta(\wt{G}^{(n)})) < \mca{O}_{\rm reg}$, given in Table \ref{table 1} and Table \ref{table 2}.

\begin{table}[H] 
\caption{$\WF^{\rm geo}(\Theta(\wt{G}^{(n)})) < \mca{O}_{\rm reg}$ for classical type} \label{table 1}
\vskip 5pt
\renewcommand{\arraystretch}{1.4}
\begin{tabular}{|c|c|c|c|c|c|}
\hline
 & type $A_r$  &    type $B_r$  & type $C_r$ &  type $D_r$       \\
\hline
\hline
$\nn$ odd   & $\nn \lest r$   &  $\nn \lest 2r-1$   & $\nn \lest 2r-1$    &  $\nn \lest 2r-3$         \\
% \hline
$\nn$ even & $\nn \lest r$    & $\nn \lest 2r$  & $\nn/2 \lest 2r-2$    & $\nn \lest 2r-4$      \\
% \hline
%$\nn=2k, k$ odd  & $\nn < r$   & $\nn \lest 2r$  & $k\lest 2r-3$   &  $ \nn \lest 2r-4$    \\
\hline
\end{tabular}
\end{table}

\begin{table}[H] 
\caption{$\WF^{\rm geo}(\Theta(\wt{G}^{(n)})) < \mca{O}_{\rm reg}$ for exceptional type} \label{table 2}
\vskip 5pt
\renewcommand{\arraystretch}{1.4}
\begin{tabular}{|c|c|c|c|c|c|c|}
\hline
 type $G_2$  &    type $F_4$  & type $E_6$ &  type $E_7$ & $E_8$       \\
\hline
\hline
$\nn \lest 5$ &  $\nn \lest 11$   &  $\nn \lest 11$   & $\nn \lest 17$    &  $\nn \lest 29$         \\
% \hline
or $\nn=6,9$ & or $\nn=12, 14, 16$    &  &     &       \\
% \hline
%$\nn=2k, k$ odd  & $\nn < r$   & $\nn \lest 2r$  & $k\lest 2r-3$   &  $ \nn \lest 2r-4$    \\
\hline
\end{tabular}
\end{table}
We note that except the cases of $\nn$ even for type $C_r$, and $\nn=6, 9$ for type $G_2$, and $\nn=12, 14, 16$ for type $F_4$, all other cases are of the form $\nn \lest e(\Delta)$, where $e(\Delta)$ is the maximum exponent of the Weyl group of $G$. 

\subsection{Proof  for classical types} 
For every partition $\mfr{m}=(m_1, ..., m_s)$ with $s\gest 2$ and $m_i \in \N_{\gest 1}$, we write $\val{\mfr{m}}=\sum_{i=1}^s m_i$. We first state a basic but key property of the functions $\mfr{s}(-;z)$ and $d_{com}^{(z)}(-)$ given in \eqref{smn}.

\begin{lm} \label{L:key01}
Keep the notation as above. Then
$$d_{com}^{(z)}(\val{\mfr{m}}) = \mfr{s}(\val{\mfr{m}}; z) \lest \sum_{i=1}^s \mfr{s}(m_i;z) = d_{com}^{(z)}(\mfr{m}).$$
Moreover, if there is a non-empty subset $J\subseteq \set{1, 2, ..., s}$ such that $z < \sum_{i \in J} m_i$, then the above inequality is strict.
\end{lm}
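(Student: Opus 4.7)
The plan is to bootstrap the order-reversing property of $d_{com}^{(z)}$ recorded in the type $A_r$ discussion of Section 2, and then refine the resulting dominance inequality to a strict one by inspecting the largest parts of the two partitions in question.

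For the non-strict inequality, I would note that $(\val{\mfr{m}})$, the one-part partition, is the unique maximum in dominance order among partitions of $\val{\mfr{m}}$; in particular $\mfr{m} \lest (\val{\mfr{m}})$. Applying the order-reversing map $d_{com}^{(z)}$ then gives $d_{com}^{(z)}(\val{\mfr{m}}) \lest d_{com}^{(z)}(\mfr{m})$, and by definition the left-hand side equals $\mfr{s}(\val{\mfr{m}};z)$. This settles the chain of equalities and inequalities in the main display.

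For the strict version, I would first observe that the hypothesis of the ``moreover'' clause---the existence of a non-empty $J \subseteq \set{1,\ldots,s}$ with $z < \sum_{i \in J} m_i$---is equivalent to $\val{\mfr{m}} > z$, since the maximum of $\sum_{i \in J} m_i$ over non-empty $J$ is attained at $J = \set{1,\ldots,s}$. Under the assumption $\val{\mfr{m}} > z$, I would compare the first (largest) parts of the two partitions. By definition the first part of $\mfr{s}(\val{\mfr{m}};z)$ is exactly $z$, whereas, because the sum of partitions in this paper is taken componentwise after padding with zeros, the first part of $\sum_i \mfr{s}(m_i;z)$ equals $\sum_i \min(z,m_i)$. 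A short case split finishes the job: if every $m_i < z$ then $\sum_i \min(z,m_i) = \val{\mfr{m}} > z$; if some $m_j \gest z$, then using $s \gest 2$ and $m_k \gest 1$ for all $k \ne j$, one has $\sum_i \min(z,m_i) \gest z + (s-1) \gest z+1$. In either case the first parts differ, so $\mfr{s}(\val{\mfr{m}};z)$ and $\sum_i \mfr{s}(m_i;z)$ are distinct partitions, upgrading the non-strict dominance to a strict one.

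The argument is essentially combinatorial with no serious obstacle. The only mild bookkeeping is the equivalence of the stated hypothesis to $\val{\mfr{m}} > z$ and the two-case analysis identifying the first part of the componentwise sum; everything else is direct from definitions together with the already-established order-reversing property.
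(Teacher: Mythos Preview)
Your proof is correct and follows essentially the same strategy as the paper: the non-strict inequality comes from the order-reversing property of $d_{com}^{(z)}$ applied to $\mfr{m} \lest (\val{\mfr{m}})$, and the strict inequality is obtained by comparing leading parts. Your reduction of the hypothesis to the single condition $\val{\mfr{m}} > z$ (by taking $J = \{1,\ldots,s\}$) is in fact slightly cleaner than the paper's phrasing, which bounds the leading numeral of $\sum_i \mfr{s}(m_i;z)$ via that of the partial sum $\sum_{i\in J}\mfr{s}(m_i;z)$; your two-case split on whether all $m_i < z$ makes the bound $\sum_i \min(z,m_i) \gest z+1$ completely transparent.
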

\begin{proof}
The two equalities hold by the definition of $d_{com}^{(z)}$. The middle inequality follows from the order-reversing property of $d_{com}^{(z)}$, since $\val{\mfr{m}} \gest \mfr{m}$ as partitions. For the second assertion, the assumption implies that the leading numeral of the partition $d_{com}^{(z)}(\mfr{m})$ is bounded below by that of $\sum_{i\in J} \mfr{s}(m_i;z)$, and is $\gest z+1$. However, the leading numeral of $\mfr{s}(\val{\mfr{m}}; z)$ is equal to $z$. This gives the strict inequality.
\end{proof}

In this subsection, we consider group $\wt{G}$ of classical type $A_{r-1}, B_r, C_r$ or $D_r$. 
Let $S \subseteq \Delta$. Assume $S$ is not $\wt{G}$-autotomous. It implies the following property:
\begin{enumerate}
\item[(P)] for every $\beta \in \Delta - S$ and every connected component $S_j$ of $S \sqcup \set{\beta}$, one has $\nn \lest e(S_j)$.
\end{enumerate}
Since $d_{BV,G}^{(n)}$ is order-reversing, to show the inequality in Proposition \ref{P:piSkey}, it suffices to consider $S'$ of the form $S \sqcup \set{\beta}$. To verify the inequality for such $S'$, we will use property (P) above.
For a partition $\mfr{p}$, we write $L^\sharp(\mfr{p})$ for its leading numeral.

%%%
\subsubsection{Type $A_{r-1}$}
Let $\wt{G}$ be a cover of $G$ whose root system is of type $A_{r-1}$. Let 
$$\mfr{p}=(p_1, p_2, ..., p_i, p_{i+1}, ...,  p_s)$$ be an ordered partition of $r$ associated with the Levi subgroup $M_{S_\mfr{p}}$ of type 
$A_{p_1-1} \times ... \times A_{p_s-1}$. Let $\mfr{q}=(q_1, q_2, ..., q_{s-1})$ be another partition such that $S_\mfr{q} =  S_\mfr{p} \sqcup \set{\beta}$. Then there is $i$ such that $\mfr{q}=(p_1, ..., p_{i-1}, p_{i}+p_{i+1}, p_{i+2}, ..., p_s)$

Property (P) implies that $\nn < p_i + p_{i+1}$ and it follows from Lemma \ref{L:key01} that
$$d_{BV, G}^{(n)}(\mfr{q}) = \sum_{j=1}^{s-1} \mfr{s}(q_j; \nn) < \sum_{i=1}^s \mfr{s}(p_i; \nn) = d_{BV, G}^{(n)}(\mfr{p}),$$
Hence, Proposition \ref{P:piSkey} and thus Theorem \ref{T:main2} hold in this case.

%%%
\subsubsection{Type $B_r$}
We consider cover $\wt{G}$ of type $B_r$. Let $\mfr{p}=(p_1, p_2, ..., p_{s-1}, p_s)$ be an ordered partition of $r$ associated with the Levi subgroup $M_{S_\mfr{p}}$ of type 
$$A_{p_1-1} \times ... \times A_{p_{s-1}-1} \times B_{p_s}.$$
Let $\mfr{q}:=(q_1, q_2, ..., q_{s-2}, q_{s-1})$ be the partition associated with $S_\mfr{q}$ such that $S_\mfr{q} = S_\mfr{p} \sqcup \set{\beta}$. 
Then there are $q_{j_0}, p_{i_0}, p_{i_0+1}$ such that $q_{j_0} = p_{i_0} + p_{i_0+1}$, and the other numerals in $\mfr{q}$ and $\mfr{p}$ are identical. 
According to Table \ref{table 1}, there are two cases to consider, as follows.

First, if $\nn$ is odd, then by the definition in \eqref{E:dBV-B}, we have $d_{BV,G}^{(n)}(\mca{O}) = d_{com}^{(\nn)}(\mca{O})^+{}_B$. 
The regular orbit of $\wt{M}_{S_\mfr{p}}^\vee \subset \wt{G}^\vee$ is $\mca{O}_{\rm reg}^{\vee,S_\mfr{p}} = (p_1^2, ..., p_{s-1}^2, 2p_s)$. Similarly, $\mca{O}_{\rm reg}^{\vee,S_\mfr{q}} = (q_1^2, ..., q_{s-2}^2, 2q_{s-1})$.
 If $j_0\lest s-2$, then property (P) implies $\nn < p_{i_0} + p_{i_0+1}$. Lemma \ref{L:key01} implies that $d_{com}^{(\nn)}(\mca{O}_{\rm reg}^{\vee,S_\mfr{q}}) < d_{com}^{(\nn)}(\mca{O}_{\rm reg}^{\vee,S_\mfr{p}} )$. We have
$$L^\sharp(d_{com}^{(\nn)}(\mca{O}_{\rm reg}^{\vee,S_\mfr{q}})) + 2 \lest  L^\sharp( d_{com}^{(\nn)}(\mca{O}_{\rm reg}^{\vee,S_\mfr{p}} )),$$
and thus 
$d_{BV, G}^{(n)}(\mca{O}_{\rm reg}^{\vee,S_\mfr{q}}) < d_{BV, G}^{(n)}(\mca{O}_{\rm reg}^{\vee,S_\mfr{p}} )$.
If $j_0=s-1$, then we get $q_{s-1} = p_{s-1} + p_s$. Property (P) implies $\nn < 2p_s + p_{s-1} + p_{s-1}$. Lemma \ref{L:key01} gives that $d_{com}^{(\nn)}(\mca{O}_{\rm reg}^{\vee,S_\mfr{q}}) < d_{com}^{(\nn)}(\mca{O}_{\rm reg}^{\vee,S_\mfr{p}} )$, and in this case 
$L^\sharp(d_{com}^{(\nn)}(\mca{O}_{\rm reg}^{\vee,S_\mfr{q}}))$ is odd due to the fact $q_{s-1}>0$, and
$$L^\sharp(d_{com}^{(\nn)}(\mca{O}_{\rm reg}^{\vee,S_\mfr{q}})) + 1 \lest  L^\sharp( d_{com}^{(\nn)}(\mca{O}_{\rm reg}^{\vee,S_\mfr{p}} )).$$
Thus, $d_{BV, G}^{(n)}(\mca{O}_{\rm reg}^{\vee,S_\mfr{q}}) < d_{BV, G}^{(n)}(\mca{O}_{\rm reg}^{\vee,S_\mfr{p}} )$ also holds.

Second, if $\nn$ is even, then $\wt{G}^\vee$ is of type $B_r$. We have $d_{BV,G}^{(n)}(\mca{O}) = d_{com}^{(\nn)}(\mca{O})_B$. 
The regular orbit of $\wt{M}_{S_\mfr{p}}^\vee \subseteq \wt{G}^\vee$ is
$$\mca{O}_{\rm reg}^{\vee,S_\mfr{p}} = (p_1^2, ..., p_{s-1}^2, 2p_s +1).$$
Similarly, $\mca{O}_{\rm reg}^{\vee,S_\mfr{q}} = (q_1^2, ..., q_{s-2}^2, 2q_{s-1} +1)$.
 If $j_0\lest s-2$, then property (P) implies $\nn < p_{i_0} + p_{i_0+1}$. Argue exactly as the $\nn$ being odd case, we get  
$d_{BV, G}^{(n)}(\mca{O}_{\rm reg}^{\vee,S_\mfr{q}}) < d_{BV, G}^{(n)}(\mca{O}_{\rm reg}^{\vee,S_\mfr{p}} )$.
If $j_0=s-1$, then we get $q_{s-1} = p_{s-1} + p_s$. Property (P) implies $\nn < (2p_s +1) + p_{s-1} + p_{s-1}$. Lemma \ref{L:key01} gives that $d_{com}^{(\nn)}(\mca{O}_{\rm reg}^{\vee,S_\mfr{q}}) < d_{com}^{(\nn)}(\mca{O}_{\rm reg}^{\vee,S_\mfr{p}} )$, and in this case 
$L^\sharp(d_{com}^{(\nn)}(\mca{O}_{\rm reg}^{\vee,S_\mfr{q}}))$ is even, and
$$L^\sharp(d_{com}^{(\nn)}(\mca{O}_{\rm reg}^{\vee,S_\mfr{q}})) + 1 \lest  L^\sharp( d_{com}^{(\nn)}(\mca{O}_{\rm reg}^{\vee,S_\mfr{p}} )).$$
Thus, $d_{BV, G}^{(n)}(\mca{O}_{\rm reg}^{\vee,S_\mfr{q}}) < d_{BV, G}^{(n)}(\mca{O}_{\rm reg}^{\vee,S_\mfr{p}} )$ also holds. 

The above verifies Proposition \ref{P:piSkey} and thus Theorem \ref{T:main2} for type $B_r$.

\subsubsection{Type $C_r$}
Let $\wt{G}$ be a cover of $G$ of Cartan type $C_r$. Let $\mfr{p}=(p_1, p_2, ..., p_{s-1}, p_s)$ be a partition of $r$ associated with the Levi subgroup $M_{S_\mfr{p}}$ of type 
$$A_{p_1-1} \times ... \times A_{p_{s-1}-1} \times C_{p_s}.$$
Let $\mfr{q}:=(q_1, q_2, ..., q_{s-2}, q_{s-1})$ be the partition associated with $S_\mfr{q}$ such that $S_\mfr{q} = S_\mfr{p} \sqcup \set{\beta}$. 
Then there are $q_{j_0}, p_{i_0}, p_{i_0+1}$ such that $q_{j_0} = p_{i_0} + p_{i_0+1}$, and the other numerals in $\mfr{q}$ and $\mfr{p}$ are identical. 
There are also two cases to consider, according to the parity of $\nn$.

First, if $\nn$ is odd, then $\wt{G}^\vee$ is of type $B_r$ and we have $d_{BV,G}^{(n)}(\mca{O}) = d_{com}^{(\nn)}(\mca{O})^-{}_C$. 
The regular orbit of $\wt{M}_{S_\mfr{p}}^\vee \subset \wt{G}^\vee$ is $\mca{O}_{\rm reg}^{\vee,S_\mfr{p}} = (p_1^2, ..., p_{s-1}^2, 2p_s + 1)$. Similarly, $\mca{O}_{\rm reg}^{\vee,S_\mfr{q}} = (q_1^2, ..., q_{s-2}^2, 2q_{s-1}+1)$.
 If $j_0\lest s-2$, then property (P) implies $\nn < p_{i_0} + p_{i_0+1}$. Lemma \ref{L:key01} implies that $d_{com}^{(\nn)}(\mca{O}_{\rm reg}^{\vee,S_\mfr{q}}) < d_{com}^{(\nn)}(\mca{O}_{\rm reg}^{\vee,S_\mfr{p}} )$ with $L^\sharp(d_{com}^{(\nn)}(\mca{O}_{\rm reg}^{\vee,S_\mfr{q}})) + 2 \lest  L^\sharp( d_{com}^{(\nn)}(\mca{O}_{\rm reg}^{\vee,S_\mfr{p}} ))$. This clearly gives 
$d_{BV, G}^{(n)}(\mca{O}_{\rm reg}^{\vee,S_\mfr{q}}) < d_{BV, G}^{(n)}(\mca{O}_{\rm reg}^{\vee,S_\mfr{p}} )$.
If $j_0=s-1$, then we get $q_{s-1} = p_{s-1} + p_s$. Property (P) implies $\nn < 2p_s + p_{s-1} + p_{s-1}$. Lemma \ref{L:key01} gives that $d_{com}^{(\nn)}(\mca{O}_{\rm reg}^{\vee,S_\mfr{q}}) < d_{com}^{(\nn)}(\mca{O}_{\rm reg}^{\vee,S_\mfr{p}} )$, and in this case 
$L^\sharp(d_{com}^{(\nn)}(\mca{O}_{\rm reg}^{\vee,S_\mfr{q}}))$ is odd, and
$$L^\sharp(d_{com}^{(\nn)}(\mca{O}_{\rm reg}^{\vee,S_\mfr{q}})) + 1 \lest  L^\sharp( d_{com}^{(\nn)}(\mca{O}_{\rm reg}^{\vee,S_\mfr{p}} )).$$
Thus, it is easy to see $d_{BV, G}^{(n)}(\mca{O}_{\rm reg}^{\vee,S_\mfr{q}}) < d_{BV, G}^{(n)}(\mca{O}_{\rm reg}^{\vee,S_\mfr{p}} )$ holds as well.

Second, if $\nn$ is even, then $\wt{G}^\vee$ is of type $C_r$. The map $d_{BV,G}^{(n)}$ is given in  \eqref{E:dBV-C}.
% We have $d_{BV,G}^{(n)}(\mca{O}) = d_{com}^{(\nn)}(\mca{O})_B$. 
The regular orbit of $\wt{M}_{S_\mfr{p}}^\vee \subseteq \wt{G}^\vee$ is $\mca{O}_{\rm reg}^{\vee,S_\mfr{p}} = (p_1^2, ..., p_{s-1}^2, 2p_s)$. Similarly, $\mca{O}_{\rm reg}^{\vee,S_\mfr{q}} = (q_1^2, ..., q_{s-2}^2, 2q_{s-1})$.
 If $j_0\lest s-2$, then property (P) implies $\nn < p_{i_0} + p_{i_0+1}$. Thus, $\nn/2 < p_{i_0} + p_{i_0+1}$. Similar as the $\nn$ odd case above,
 we get
 $$d_{com}^{(\nn/2)}(\mca{O}_{\rm reg}^{\vee,S_\mfr{q}}) < d_{com}^{(\nn/2)}(\mca{O}_{\rm reg}^{\vee,S_\mfr{p}} ) \text{ with } L^\sharp(d_{com}^{(\nn/2)}(\mca{O}_{\rm reg}^{\vee,S_\mfr{q}})) + 2 \lest  L^\sharp( d_{com}^{(\nn/2)}(\mca{O}_{\rm reg}^{\vee,S_\mfr{p}} )),$$
and easily deduce $d_{BV, G}^{(n)}(\mca{O}_{\rm reg}^{\vee,S_\mfr{q}}) < d_{BV, G}^{(n)}(\mca{O}_{\rm reg}^{\vee,S_\mfr{p}} )$.
If $j_0=s-1$, then we get $q_{s-1} = p_{s-1} + p_s$. Property (P) implies $\nn < 2p_s + p_{s-1} + p_{s-1}$ and thus $\nn/2 < 2p_s + p_{s-1} + p_{s-1}$. Lemma \ref{L:key01} gives that $d_{com}^{(\nn/2)}(\mca{O}_{\rm reg}^{\vee,S_\mfr{q}}) < d_{com}^{(\nn/2)}(\mca{O}_{\rm reg}^{\vee,S_\mfr{p}} )$. Note that if $2p_s\gest \nn/2$ or $p_{s-1} \gest \nn/2$, we get 
$$L^\sharp(d_{com}^{(\nn/2)}(\mca{O}_{\rm reg}^{\vee,S_\mfr{q}})) + 2 \lest  L^\sharp( d_{com}^{(\nn/2)}(\mca{O}_{\rm reg}^{\vee,S_\mfr{p}} )).$$
On the other hand, if $2p_s < \nn/2$ and $p_{s-1}< \nn/2$, then 
$$L^\sharp( d_{com}^{(\nn/2)}(\mca{O}_{\rm reg}^{\vee,S_\mfr{p}} )) - L^\sharp(d_{com}^{(\nn/2)}(\mca{O}_{\rm reg}^{\vee,S_\mfr{q}})) = 2p_{s} + 2p_{s-1} - \nn/2 \gest 2.$$
In view of these two inequalities and \eqref{E:dBV-C}, we get $d_{BV, G}^{(n)}(\mca{O}_{\rm reg}^{\vee,S_\mfr{q}}) < d_{BV, G}^{(n)}(\mca{O}_{\rm reg}^{\vee,S_\mfr{p}} )$ as well.

The above verifies Proposition \ref{P:piSkey} and Theorem \ref{T:main2} for type $C_r$.

\subsubsection{Type $D_r$}
For $\wt{G}$ of type $D_r$, at the partition level, the inequality in Proposition \ref{P:piSkey} follows from a parallel argument as in the $B_r, C_r$ case. We omit the details.

It suffices to remark on the case concerning very even orbits. However, suppose $\mfr{p}$ or $\mfr{q}$ (possibly both) is a very even partition associated with $S_\mfr{p} \subsetneq  S_\mfr{q}$ respectively. If they are very even, then we have two orbits $\mfr{p}^{\rm I}, \mfr{p}^{\rm II}$ and $\mfr{q}^{\rm I}, \mfr{q}^{\rm II}$. However, since as partitions, we already have $d_{BV, G}^{(n)}(\mfr{q}) < d_{BV, G}^{(n)}(\mfr{p})$, it follows that for any $\flat, \sharp \in \set{\emptyset, {\rm I}, {\rm II}}$ one has (see \cite[Theorem 6.2.5]{CM93})
$$d_{BV, G}^{(n)}(\mfr{q}^\flat) < d_{BV, G}^{(n)}(\mfr{p}^\sharp)$$
as desired. Here $\mfr{p}^\emptyset$ just means that $\mfr{p}$ is not very even, and it represents the unique orbit associated with $\mfr{p}$; similarly for $\mfr{q}$.

\subsection{Proof for exceptional types} 
For a cover $\wt{G}$ of exceptional groups and $S \subseteq \Delta$,  we have verified 
Proposition \ref{P:piSkey} directly by using the data of $d_{BV,G}^{(n)}$ given in \cite[Appendix]{GLLS}. This gives Theorem \ref{T:main2} for exceptional groups. We remark that in view of Table \ref{table 2}, the condition $\nn \lest e(S)$ implies that $\Theta(\wt{M}_S^{(n)})$ is not generic, and this non-genericity is equivalent to the inequality $d_{BV, G}^{(n)}(\mca{O}_{\rm reg}^{\vee,S}) < \mca{O}_{\rm reg}^G$.

\begin{eg}
First, consider $\wt{G}=\wt{F}_4^{(n)}$, and $S\subset \Delta$ corresponds to the Levi  $A_2 \subset F_4$.  Here $S$ is not autotomous, and Theorem \ref{T:main2} holds for $\pi_S$ when $n=1, 2$.
As another example, consider $G=E_8$ and $S$ such that $\mca{O}_{\rm reg}^S=D_4 + A_2$. Then Theorem  \ref{T:main2} holds for this $\pi_S$ when $1\lest n \lest 5$.
\end{eg}

%\begin{rmk}
%Fix $S \subseteq \Delta$. It is possible that $d_{BV, G}^{(n)}(\mca{O}_{\rm reg}^{\vee,S'}) < d_{BV, G}^{(n)}(\mca{O}_{\rm reg}^{\vee,S})$ for every $S' \supsetneq S$, and also $\WF^{\rm geo}(\Theta(\wt{M}_S^{(n)})) = \mca{O}_{\rm reg}^S$. In such cases, we also have the equality $\WF^{\rm geo}(\pi_S)= \mca{O}_{\rm Spr}(j_{W_\nu^S}^W(\varepsilon_\nu^S))$. As a concrete example, consider a primitive cover $\wt{E}_8^{(6)}$ and take $S \subset \Delta(E_8)$ be associated with $A_5 + A_1 \subseteq E_8$ and let $n=6$. As another example, we can also take a primitive cover $\wt{G}=\wt{\Sp}_6^{(3)}$ and let $S \subset \Delta$ be associated with the Levi $A_2 \subseteq \Sp_6$ and $n=3$. These two examples are the ones when $n$ is the first occurrence of genericity for $\Theta(\wt{M}_S^{(n)})$. This also illustrates that our approach to Theorem \ref{T:main2} does not apply if  $n$ lies in $[\mfr{e}_S, \mfr{e}_\Delta]_\N$.
%\end{rmk}

%%%
\section{Leading coefficient $c_\Theta(\mca{O})_\psi$ for theta representation}
In this section, we consider a theta representation $\Theta:=\Theta(\wt{G}^{(n)}):=\Theta(\nu)$ of $\wt{G}^{(n)}$ and study the coefficient $c_\Theta(\mca{O})_\psi$   for $\mca{O} \in {\rm WF}(\Theta)$ that arises from the Harish-Chandra character expansion of $\Theta$. Here $\psi$ is a non-trivial character of $F$. The involved measures are naturally chosen as in \cite{MW87, Var14, Pate}, and thus $c_\Theta(\mca{O})_\psi$ is equal to the dimension of certain degenerate Whittaker models of $\Theta$. Henceforth, we utilize such an identification.
A speculative formula for $c_\Theta(\mca{O})_\psi$ was given in \cite{GaTs, GLT25} for certain ``persistent" cover $\wt{G}^{(n)}$. In this section, we verify the formula in some special cases. However, we also give examples illustrating that the original formula demands a modification in general.

\begin{dfn} \label{D:ds}
Let $\wt{G}^{(n)}$ be the cover of an almost simple and simply-connected $G$ associated with $Q$ such that $Q(\alpha^\vee) =1$ for any short simple coroot $\alpha^\vee$. 
We call $\wt{G}^{(n)}$ of $\diamondsuit$-type if it is one of the following: 
\begin{enumerate}
\item[$\bullet$] $\wt{\SL}_{r+1}^{(n)}$ with $\gcd(n, r+1)=1$;
\item[$\bullet$] $\wt{\Spin}_{2r+1}^{(n)}, \wt{\Sp}_{2r}^{(n)}, \wt{\Spin}_{2r}^{(n)}$ with $n$ odd;
\item[$\bullet$] $\wt{G}_2^{(n)}$ with any $n$;
\item[$\bullet$] $\wt{F}_4^{(n)}, \wt{E}_6^{(n)}, \wt{E}_7^{(n)}$ with $2, 3 \nmid n$;
\item[$\bullet$] $\wt{E}_8^{(n)}$ with $2, 3, 5 \nmid n$.
\end{enumerate}
Such a cover is necessarily a primitive cover. 
\end{dfn}
Recall that the geometric wavefront orbit of $\Theta(\wt{G}^{(n)})$
is equal to $\mca{O}_{\rm Spr}(j_{W_\nu}^W \varepsilon)$. For $\diamondsuit$-type covers of classical groups of type $X$, we have
$$\mca{O}_{\rm Spr}(j_{W_\nu}^W \varepsilon) = \mca{O}_X^{k, n},$$
where $k=2r+1, 2r, 2r$ for $G=\Spin_{2r+1}, \Sp_{2r}, \Spin_{2r}$ respectively, and $\mca{O}_X^{k, n}$ is the $X$-collapse of the partition $\mca{O}^{k, n}:=(n^a b)$ with $k=a n + b, 0\lest b < n$. See \cite{GLT25, KOW} for more details.

Recall $\msc{X}:=Y/Y_{Q,n}$ from \eqref{D:X}. Since we have fixed $Q$, we write $\msc{X}_n:=\msc{X}$ to highlight the dependence on $n$. Let 
$$\sigma_{\msc{X}_n}: W \to {\rm Perm}(\msc{X}_n)$$
be the permutation representation of $W$ induced from the action $w(x), w \in W, x \in Y$.

\begin{thm} \label{T:cO}
Let $\wt{G}^{(n)}$ be a covering group of $\diamondsuit$-type. Assume $p$ is sufficiently large, and that the geometric wavefront orbit $\mca{O}_{\rm Spr}( j_{W_\nu}^W \varepsilon_{W_\nu} )$ of $\Theta(\wt{G}^{(n)})$ is  the regular orbit of a Levi subgroup; we further assume that $\mca{O}_X^{k, n} = \mca{O}^{k, n}$ if $X \in \set{B, C}$ and that $\mca{O}_X^{2r, n}=(n^{2a+1},1)$ if $X=D$. Then one has
$$c_\Theta(\mca{O})_\psi = \angb{ j_{W_\nu}^W \varepsilon_{W_\nu} }{ \sigma_{\msc{X}_n} \otimes \varepsilon_W }_W$$
for every $F$-rational orbit $\mca{O}$ in ${\rm WF}(\Theta(\wt{G}^{(n)}))$. 
%\textcolor{red}{(Really for every orbit???)}
\end{thm}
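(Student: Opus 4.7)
The plan is to combine the Moeglin--Waldspurger formula (extended to covers by Patel in \cite{Pate}) with a Rodier-type counting of Whittaker functionals across a Jacquet module. The hypothesis that $\mca{O}_{\rm Spr}(j_{W_\nu}^W \varepsilon_{W_\nu}) = {\rm Ind}_{M_S}^G(\mca{O}_{\rm reg}^{M_S})$ for a standard Levi $M = M_S$ (ensured across the classical types by the specific conditions on $\mca{O}_X^{k,n}$) lets us invoke Moeglin--Waldspurger to obtain
$$c_\Theta(\mca{O})_\psi = \dim \Wh_{\psi_M}^M(r_P^G \Theta),$$
where $P = MN$ is the standard parabolic with Levi $M$, and $\Wh^M_{\psi_M}$ denotes the $\psi_M$-generic Whittaker functional on $\wt{M}$.

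First I would analyze $r_P^G \Theta$ via the embedding $\Theta = \pi_\Delta \hookrightarrow I(\nu)$. The covering Bernstein--Zelevinsky geometric lemma gives a filtration of $r_P^G I(\nu)$ indexed by cosets $W^M \backslash W$ with graded pieces isomorphic to regular unramified principal series $I^M(w\nu)$ of $\wt{M}$. To extract the part contributing to $\Theta$, I would use Casselman's criterion together with the characterization of $\pi_\Delta$ via the sign-isotypic component of its Iwahori invariants $\Theta^{(\wt{I}, \chi_0)}$ under the Savin isomorphism $\Psi_0$, which distinguishes $\Theta$ from the other $\pi_{S'}$.

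Second, I would count Whittaker functionals on each surviving $I^M(w\nu)$: by the covering Casselman--Shalika theorem, its Whittaker dimension equals $|\msc{X}_n(M)|$, the analogue of $\msc{X}_n$ for $\wt{M}$. Summing contributions over the relevant cosets and tracking the sign twist produces an expression involving the permutation trace of $w \in W$ on $\msc{X}_n$ weighted by $\varepsilon_W(w)$. I would then recognize this sum as the inner product $\angb{j_{W_\nu}^W \varepsilon_{W_\nu}}{\sigma_{\msc{X}_n} \otimes \varepsilon_W}_W$ via Frobenius reciprocity applied to ${\rm Ind}_{W_\nu}^W \varepsilon_{W_\nu}$ and the defining property of $j$-induction as the unique top-degree $W$-subrepresentation therein.

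The main obstacle is the second step: precisely locating $r_P^G \Theta$ inside $r_P^G I(\nu)$. Tracking the sign-isotypic component from Iwahori-fixed vectors up through the Jacquet functor requires a compatibility between Savin's isomorphism and parabolic descent, which must be established in the covering setting. The auxiliary hypotheses on $\mca{O}_X^{k,n}$ prevent the $X$-collapse operations from shrinking the Levi and causing multiplicity jumps in the Jacquet filtration; without them, additional graded pieces would contribute and the combinatorial matching would break down, consistent with the remark above that the original speculation of \cite{GaTs} requires modification in general.
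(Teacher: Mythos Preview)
Your first reduction via M\oe glin--Waldspurger/Patel to $\dim \Wh_{\psi_M}^M(r_P^G\Theta)$ matches the paper's opening move (the paper cites \cite{GGS21} rather than \cite{Pate}, but the content is the same). From there, however, the paper proceeds very differently and much more directly. It does not filter $r_P^G I(\nu)$ via the geometric lemma at all. Instead it uses the periodicity property of theta representations: $r_P^G\Theta$ is itself the theta representation $\Theta(\wt{M}_\mca{O}^{(n)})$ of the Levi. Then the Whittaker dimension of this Levi theta is known by \cite{Ga6} and \cite[Proposition~9.2]{GGK1} to equal $\angb{\varepsilon_{W_J}}{\sigma_{\msc{X}_n}}_{W_J}$ with $W_J=W(M_\mca{O})$. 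So $c_\Theta(\mca{O})_\psi$ is already computed; what remains is the purely combinatorial identity
\[
\angb{\varepsilon_W\otimes j_{W_\nu}^W\varepsilon_{W_\nu}}{\sigma_{\msc{X}_n}}_W=\angb{\varepsilon_{W_J}}{\sigma_{\msc{X}_n}}_{W_J},
\]
which the paper verifies type by type by evaluating both sides explicitly, the left via the Gyoja--Nishiyama--Shimura formulas \cite{GNS99} and the right via Sommers' affine Springer fiber Euler characteristic \cite{Som97}.

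Your proposed route has a genuine gap in the final identification. Even granting that you can isolate the contribution of $\Theta$ inside $r_P^G I(\nu)$ (which you flag as the main obstacle, and which the periodicity result renders unnecessary), the step ``recognize this sum as $\angb{j_{W_\nu}^W\varepsilon_{W_\nu}}{\sigma_{\msc{X}_n}\otimes\varepsilon_W}_W$ via Frobenius reciprocity'' does not go through. Frobenius reciprocity relates pairings with ${\rm Ind}_{W_\nu}^W\varepsilon_{W_\nu}$, not with the truncated induction $j_{W_\nu}^W\varepsilon_{W_\nu}$, and the two pairings differ in general. Indeed, the paper exhibits explicit counterexamples (type $C_r$ with $r=2m$, $n=2m+1$; type $D_r$ with $b\neq m$) where the claimed identity fails once the extra hypotheses on $\mca{O}_X^{k,n}$ are dropped. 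This shows the identity is not a formal consequence of any Frobenius-type argument and genuinely requires the case-by-case numerical verification that the paper carries out.
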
 

\subsection{Method of computation} 
We explain the main ideas and steps in the proof of Theorem \ref{T:cO}. 
Let $\mca{O}$ be in ${\rm WF}(\Theta(\wt{G}^{(n)}))$ as in Theorem \ref{T:cO}. Then $\mca{O}$ is a regular orbit of a Levi subgroup $M_\mca{O} \subseteq G$, which actually only depends on $\mca{O}_{\rm Spr}( j_{W_\nu}^W \varepsilon_{W_\nu})$, i.e., the geometric closure of $\mca{O}$.
By the result \cite[Theorem 1.5]{GGS21} of Gomez--Gourevitich--Sahi, we have
$$c_\Theta(\mca{O})_\psi = \dim \Wh_{\psi_\mca{O}}({\rm Jac}_{M_\mca{O}}(\Theta)),$$
i.e., it is equal to the dimension of $\psi_\mca{O}$-Whittaker space of the Jacquet module of $\Theta$ with respect to $M_\mca{O}$. Here $\psi_\mca{O}$ is a non-degenerate character of the unipotent subgroup of the Borel subgroup of $M_\mca{O}$, defined using $\psi$ and $\mca{O}$. By the periodicity of theta representation, we get ${\rm Jac}_{M_\mca{O}}(\Theta)$ is equal to a theta representation $\Theta(\wt{M}_\mca{O}^{(n)})$ of $\wt{M}_\mca{O}^{(n)}$. Now, by result of \cite{Ga6} and \cite[Proposition 9.2]{GGK1}, we have
\begin{equation} \label{E:dWh-T}
\dim \Wh_{\psi_\mca{O}}(\Theta(\wt{M}_\mca{O}^{(n)})) = \angb{ \varepsilon_{M_\mca{O}} }{ \sigma_{\msc{X}_n} }_{W(M_\mca{O})}.
\end{equation}
Thus, the key is to verify the equality
\begin{equation} \label{E:cOkey}
\angb{\varepsilon_W\otimes j_{W_\nu}^W \varepsilon_{W_\nu} }{ \sigma_{\msc{X}_n} }_W = \angb{ \varepsilon_{M_\mca{O}} }{ \sigma_{\msc{X}_n} }_{W(M_\mca{O})}
\end{equation}
for such $\Theta(\wt{G}^{(n)})$. We compute the left hand side of \eqref{E:cOkey} by using the result of Gyoja--Nishiyama--Shimura \cite{GNS99}, and for the right hand side we use the result of Sommers \cite{Som97}. 
For $\wt{G}_2^{(n)}$, one can verify \eqref{E:cOkey} directly. For the rest of this subsection, we assume $\wt{G}^{(n)}$ is of $\diamondsuit$-type except when $G=G_2, 2|n$ or $3|n$.
These conditions on $n$ for $\diamondsuit$-type covers are imposed so that we could apply \cite{GNS99, Som97} directly. In fact, we will obtain explicit formulas for the two sides of \eqref{E:cOkey}. We elaborate this below.

First, for $\wt{G}^{(n)}$ of $\diamondsuit$-type (except when $G=G_2, 2|n$ or $3|n$) and for every $\phi \in \Irr(W)$, one has
\begin{equation} \label{E:circ1}
\angb{\phi}{ \sigma_{\msc{X}_n} }_W = \frac{\chi_\phi(1)}{ \val{W} } \cdot \tau^*(\phi, n),
\end{equation}
where $\chi_\phi$ is the character of $\phi$ and $\tau^*(\phi, n)$ is investigated in \cite{GNS99}, especially regarding the question of whether $\tau^*(\phi, n)$ distinguishes Lusztig's families of $\phi \in \Irr(W)$. Also, for classical types, the number $\chi_\phi(1)=\dim \phi$ can be computed by using the hook formula.

On the other hand, the Levi subgroup $M_\mca{O}$ is associated with a subset $J \subseteq \Delta$, and thus we write $W_J:= W(M_\mca{O})$. For $w\in W$, denote
$$d(w):=\dim (Y\otimes \R)^w.$$
Let $m_1, ..., m_{\val{J}}$ be the exponents of the Weyl group $W_J$. It follows from \cite{Som97} that for all $\wt{G}^{(n)}$ of $\diamondsuit$-type (except when $G=G_2, 2|n$ or $3|n$) we have
\begin{equation} \label{E:circ2}
\begin{aligned}
\angb{ \varepsilon_{W_J} }{ \sigma_{\msc{X}_n} }_{W_J} & = \frac{1}{\val{W_J}} \sum_{w\in W_J} \varepsilon_{W_J}(w)\cdot n^{d(w)} \\
& =  \frac{(-1)^r}{ \val{W_J} } \sum_{w\in W_J} (-n)^{d(w)} \\
& = (-1)^r \cdot \chi(\hat{\mca{B}}_{-n}^J) \\
& = \frac{ n^{r-\val{J}} }{\val{W_J}} \cdot \prod_{i=1}^{\val{J}} (n- m_j)
\end{aligned}
\end{equation}
where $\chi(\hat{\mca{B}}_{t}^J)$ is the characteristic of the affine Springer fiber $\hat{\mca{B}}_{t}^J$ attached to $t$ and $J$ as in \cite{Som97}.

\subsection{Detailed proof of Theorem \ref{T:cO}}
We first look at the two extreme cases of Theorem \ref{T:cO}: for $\wt{G}^{(n)}$ of $\diamondsuit$-type, if $n=1$ or $n\in \N$ is such that ${\rm WF}^{\rm geo}(\Theta) = \mca{O}_{\rm reg}$, then the equality $c_\Theta(\mca{O})_\psi = \angb{ j_{W_\nu}^W \varepsilon_{W_\nu} }{ \sigma_{\msc{X}_n} \otimes \varepsilon_W }_W$ holds. Indeed,  the $n=1$ case is trivial and the second case follows from \eqref{E:dWh-T}.

We verify \eqref{E:cOkey} case by case. In view of the preceding observation, we only need to discuss the case when $\mca{O}_{\rm Spr}(j_{W_\nu}^W \varepsilon)$ is nonzero and is the regular orbit of a proper Levi subgroup of $G$.
%\textcolor{red}{(stopped here)}
%
\subsubsection{Type $A_r$} 
Consider $\wt{G}^{(n)} = \wt{\SL}_{r+1}^{(n)}$ with $\gcd(r+1, n)=1$. Let $\lambda$ be a partition of $k \in \N_{\gest 1}$ and let $\phi(\lambda) \in \Irr(S_k)$ be the associated representation. In particular, $\phi(k)=\mbm{1}$ and $\phi(1^k) = \varepsilon$.
The dimension of $\phi(\lambda)$ is given by the hook formula (see \cite[Theorem 3.10.2]{Sag01}):
\begin{equation} \label{E:hook}
\dim \phi(\lambda) = \frac{k!}{ \prod_{(i, j) \in \lambda} h_{i, j} },
\end{equation}
where we view $\lambda$ as a Young diagram and $h_{i, j}:=h(i, j)$ denotes the hook function of $(i, j)$.

We have
$$
j_{W_\nu}^W \varepsilon_{W_\nu} = \phi(n^a b),
$$
where $r+1 = an + b, 0\lest b <n$. We get
$$\varepsilon_W \otimes j_{W_\nu}^W \varepsilon_{W_\nu} = \phi((a+1)^b, a^{n-b}).$$
It follows  from \eqref{E:circ1} that (setting $\phi(\lambda):= \phi((a+1)^b, a^{n-b})$)
$$\begin{aligned}
\angb{ \phi(\lambda) }{ \sigma_{\msc{X}_n} }_W & = \frac{\tau^*(\phi(\lambda), n)}{ \prod_{(i, j)\in \lambda} h_{i, j} } \\
& = \frac{ n^{-1} \cdot \prod_{(i, j) \in \lambda} (n + c_{i, j}) }{ \left(\prod_{i=1}^b \prod_{j=1}^{a+1} (n-i + j) \right) \cdot \left( \prod_{i=1}^{n-b} \prod_{j=1}^a (n-b - i + j) \right) } \\
& = \frac{ (n-1) (n-2) ... (n-(b-1)) }{ b! },
\end{aligned}
$$
where the second equality above follows from \cite[Proposition 3.1]{GNS99} with $c_{i, j}:=j-i$ denoting the content of the $(i, j)$ entry of $\lambda$.

On the other hand, $M_\mca{O}$ is associated with the partition $(n^a, b)$, and thus is of type $$a A_{n-1} + A_{b-1}.$$
It then follows from \eqref{E:circ2} that
$$\angb{ \varepsilon_{W_J} }{ \sigma_{\msc{X}_n} }_{W_J}  =  \frac{n^{r-a(n-1)-(b-1)}}{ (n!)^a \cdot b! } \cdot \left( (n-1)!)^a  \right) \cdot \prod_{i=1}^{b-1} (n-i),$$
which is easily seen to be equal to $\angb{ \phi(\lambda) }{ \sigma_{\msc{X}_n} }_W$ above. This completes the proof of Theorem \ref{T:cO} for type $A_r$.

\subsubsection{Type $C_r$} We consider $\wt{G}^{(n)} = \wt{\Sp}_{2r}^{(n)}$ with $n=2m+1$ odd.
Since we need $\mca{O}_{\rm Spr}(j_{W_\nu}^W \varepsilon)$ to be the regular orbit of a Levi subgroup and also $\mca{O}_C^{2r, n} = \mca{O}^{2r, n}$, it follows from \cite{GLT25} that we must have
$$r=an + b \text{ with } a\gest 1, 0\lest b \lest \frac{n-1}{2},$$
and in this case
$$j_{W_\nu}^W \varepsilon_{W_\nu} = \phi((m^a, b); (m+1)^a), \quad \mca{O}_{\rm Spr}(j_{W_\nu}^W \varepsilon) = (n^{2a}, 2b) = \mca{O}^{2r, n}.$$
This gives 
$$\varepsilon_W \otimes j_{W_\nu}^W \varepsilon_{W_\nu}  = \phi(a^{m+1}; ((a+1)^b, a^{m-b}) )=:\phi(\lambda; \mu).$$
The hook formula for  $W(C_r)$ gives that
$$\dim \phi(\lambda; \mu) = \frac{r!}{\prod_{x'\in \lambda} h(x') \cdot \prod_{x'' \in \mu} h(x'')},$$
where $h(-)$ is the hook function mentioned above.

We get from \cite{GNS99} that
$$\begin{aligned}
\angb{ \phi(\lambda;\mu) }{ \sigma_{\msc{X}_n} }_W & = \frac{r!}{\val{W}} \cdot \frac{\prod_{x'\in \lambda}(n+ 2c(x') + 1) \cdot  \prod_{x''\in \mu}(n+ 2c(x'') + 1) }{\prod_{x'\in \lambda} h(x') \cdot \prod_{x'' \in \mu} h(x'')} \\
& = \frac{ m(m-1) (m-2) ... (m-(b-1)) }{ b! }.
\end{aligned}
$$
On the other hand, in this case $M_\mca{O}$ is of Cartan type
$$a\cdot A_{n-1} + C_b.$$
It follows from \cite{Som97} that
$$\angb{ \varepsilon_{W_J} }{ \sigma_{\msc{X}_n} }_{W_J}  =  \frac{n^{r-(an + b -a)}}{ (n!)^a \cdot 2^b \cdot b! } \cdot \left( (n-1)!\right)^a   \cdot \prod_{i=1}^{b} (n-2i+1),$$
which is equal to $\angb{ \phi(\lambda;\mu) }{ \sigma_{\msc{X}_n} }_W$, by noting that $n=2m+1$. Thus, Theorem \ref{T:cO} for type $C_r$ is proved.

\begin{rmk}
The technical condition that $\mca{O}_C^{2r, n} = \mca{O}^{2r, n}$ (besides the assumption that the former orbit is a regular orbit of a Levi subgroup) in Theorem \ref{T:cO} is indispensable. Indeed, if we consider $r=2m$ and $n=2m+1 = r+1$, then in this case
$$j_{W_\nu}^W \varepsilon = \phi(m;m) \text{ and } \mca{O}_{\rm Spr} (j_{W_\nu}^W\varepsilon)=(r,r).$$
We have $\varepsilon_W \otimes j_{W_\nu}^W \varepsilon = \phi(1^m; 1^m)$ and following the same method of computation, we get
$$\angb{ \phi(1^m;1^m) }{ \sigma_{\msc{X}_n} }_W = m+1.$$
On the other hand, 
$$c_\Theta(\mca{O})_\psi = \angb{ \varepsilon_{W_J} }{ \sigma_{\msc{X}_n} }_{W_J} = n = 2m+1.$$
This indicates a necessity of modifying the original speculation in \cite[Conjecture 4.1 (4.4)]{GLT25}.
\end{rmk}

\subsubsection{Type $B_r$}
We consider $\wt{\Spin}_{2r+1}^{(n)}$ with $n=2m+1$ odd. It follows from \cite{GLT25} that we only need to consider the case when 
$$r=na + b \text{ with } 0\lest b \lest m.$$
In this case, we get
$$j_{W_\nu}^W \varepsilon = \phi((m^a, b); (m+1)^a) \text{ and } \mca{O}_{\rm Spr} (j_{W_\nu}^W \varepsilon) = (n^{2a}, 2b+1).$$
We also have
$$\varepsilon_{W} \otimes j_{W_\nu}^W \varepsilon_{W_\nu} = \phi(a^{m+1}; ((a+1)^b, a^{m-b}) )=:\phi(\lambda; \mu).$$
The computation is then exactly the same as the type $C_r$ case, and we have
$$\angb{ \phi(\lambda;\mu) }{ \sigma_{\msc{X}_n} }_W= \angb{ \varepsilon_{W_J} }{ \sigma_{\msc{X}_n} }_{W_J} = \frac{ m(m-1) (m-2) ... (m-(b-1)) }{ b! }.$$
This verifies Theorem \ref{T:cO} for the type $B_r$ case.

\subsubsection{Type $D_r$}
We consider $\wt{\Spin}_{2r}^{(n)}$ with $n=2m+1$. If ${\rm WF}^{\rm geo}(\Theta)$ is the regular orbit of a Levi subgroup, we need
$$r-1=na + b, 0\lest b \lest m,$$
and in this case
$$j_{W_\nu}^W \varepsilon = \phi(((m+1)^a, b+1); m^a) \text{ and } \mca{O}_{\rm Spr} (j_{W_\nu}^W \varepsilon) = (n^{2a}, 2b+1, 1).$$
We get
$$\phi(\lambda; \mu):= \varepsilon_W\otimes j_{W_\nu}^W \varepsilon = \phi(a^m; ((a+1)^{b+1}, a^{m-b})),$$
and
$$\dim \phi(\lambda;\mu) = \frac{r!}{ (am)! \cdot (am + a + b +1)! } \cdot \frac{1}{ \prod_{x'\in \lambda} h(x') \cdot \prod_{x'' \in \mu} h(x'') }.$$
By using \eqref{E:circ1} and \cite[Propositition 3.6(1)]{GNS99}, a long but straightforward computation gives that
$$\angb{ \phi(\lambda;\mu) }{  \sigma_{\msc{X}_n} }_W = \frac{(m+1)m(m-1) ... (m+1-b)}{(b+1)!}.$$
On the other hand,
$M_\mca{O}$ is of Cartan type 
$$a\cdot A_{n-1} + D_{b+1}.$$
It then follows from \eqref{E:circ2} that
$$c_\Theta(\mca{O})_\psi = \angb{ \varepsilon_{W_J} }{ \sigma_{\msc{X}_n} }_{W_J} = \frac{ m(m-1) (m-2) ... (m-(b-1)) }{ (b+1)! }\cdot (n-b).$$
Thus, we see that if $b=m$, or equivalently $\mca{O}=(n^{2a+1},1)$, then $c_\Theta(\mca{O})_\psi = \angb{ \phi(\lambda;\mu) }{  \sigma_{\msc{X}_n} }_W$ as desired. This verifies Theorem \ref{T:cO} for the type $D_r$ case.

We remark also that if $b\ne m$, then the equality does not hold.

\subsubsection{Type $G_2$}
For $\wt{G}_2^{(n)}, n\in \N_{\gest 1}$, there are exactly two nontrivial cases to consider, enforced by the assumption in Theorem \ref{T:cO}: $n=2$ or 3.

In the first case, for $n=2$ we get $j_{W_\nu}^W \varepsilon = \phi_{2,2}$ and $\mca{O}_{\rm Spr} (j_{W_\nu}^W \varepsilon)=\tilde{A}_1$.
By using the character of $\sigma_{\msc{X}_n}$ computed in \cite{Ga6}, we get
$$\angb{\phi_{2,2}}{ \varepsilon\otimes \sigma_{\msc{X}_2} }=\angb{ \varepsilon_{W_J} }{ \sigma_{\msc{X}_n} }_{W_J}=1.$$
In the second case, if $n=3$, then 
$j_{W_\nu}^W \varepsilon = \phi_{1,3}'' \text{ and } \mca{O}_{\rm Spr} (j_{W_\nu}^W \varepsilon)=A_1$.
Again, using the character of $\sigma_{\msc{X}_n}$ given in \cite{Ga6}, we get
$$\angb{\phi_{1,3}'' }{ \varepsilon\otimes \sigma_{\msc{X}_2} }=\angb{ \varepsilon_{W_J} }{ \sigma_{\msc{X}_n} }_{W_J}=1.$$
This verifies Theorem \ref{T:cO} for $G_2$.

\subsubsection{Type $E_6$}
A cover $\wt{E}_6^{(n)}$ satisfies the assumption in Theorem \ref{T:cO}, in the non-trivial cases, only for $n=5$. In this case (i.e., for $\wt{E}_6^{(5)}$), one has 
$j_{W_\nu}^W \varepsilon = \phi_{60,5} \text{ and } \mca{O}_{\rm Spr} (j_{W_\nu}^W \varepsilon)=A_4 + A_1$. Again, using \eqref{E:circ1} and the result of \cite{GNS99}, we get
$$\angb{\phi_{60,5} }{ \varepsilon\otimes \sigma_{\msc{X}_5} }=  \angb{\phi_{60,11} }{ \sigma_{\msc{X}_5} } = 2.$$
On the other hand, we get from \eqref{E:circ2} that $\angb{ \varepsilon_{W_J} }{ \sigma_{\msc{X}_5} }_{W_J}=2$ as well. This verifies Theorem \ref{T:cO} for $E_6$.

\subsubsection{Type $E_7$}
For $\wt{E}_7^{(n)}$, there are exactly two non-trivial cases to consider: $n=5, 7$.

If $n=5$, then $j_{W_\nu}^W \varepsilon = \phi_{210,10} \text{ and } \mca{O}_{\rm Spr} (j_{W_\nu}^W \varepsilon)=A_4 + A_2$. Using \eqref{E:circ1} and the result of \cite{GNS99}, we get 
$$\angb{\phi_{210,10} }{ \varepsilon\otimes \sigma_{\msc{X}_5} }=  \angb{\phi_{210,13} }{ \sigma_{\msc{X}_5} } = 2.$$
On the other hand, we get from \eqref{E:circ2} that $\angb{ \varepsilon_{W_J} }{ \sigma_{\msc{X}_5} }_{W_J}=2$ as well.

If $n=7$, then $j_{W_\nu}^W \varepsilon = \phi_{105,6} \text{ and } \mca{O}_{\rm Spr} (j_{W_\nu}^W \varepsilon)=A_6$. Again, using \eqref{E:circ1} and the result of \cite{GNS99}, we get 
$$\angb{\phi_{105,6} }{ \varepsilon\otimes \sigma_{\msc{X}_7} }=  \angb{\phi_{105,21} }{ \sigma_{\msc{X}_5} } = 1.$$ 
On the other hand, we get from \eqref{E:circ2} that $\angb{ \varepsilon_{W_J} }{ \sigma_{\msc{X}_7} }_{W_J}=1$. This verifies Theorem \ref{T:cO} for $E_7$.

\subsubsection{Type $E_8$}
For $\wt{E}_8^{(n)}$ we only need to consider $n=7$, and in this case $j_{W_\nu}^W \varepsilon = \phi_{2835,14}$ and $\mca{O}_{\rm Spr} (j_{W_\nu}^W \varepsilon)=A_6 + A_1.$
It follows from \eqref{E:circ1} and the result of \cite{GNS99} that 
$$\angb{\phi_{2835,14} }{ \varepsilon\otimes \sigma_{\msc{X}_7} }=  \angb{\phi_{2835,22} }{ \sigma_{\msc{X}_7} } = 3.$$ 
On the other hand, we get from \eqref{E:circ2} that $\angb{ \varepsilon_{W_J} }{ \sigma_{\msc{X}_7} }_{W_J}=3$. This verifies Theorem \ref{T:cO} for $E_8$.

\begin{rmk}
There is no $\diamondsuit$-type cover $\wt{F}_4^{(n)}, n\gest 2$ such that $\WF^{\rm geo}(\Theta)$ is the regular orbit of a proper Levi subgroup of $F_4$, as required in  Theorem \ref{T:cO}.
\end{rmk}

%%%
%%%%%%%%%%%%%%% %%%%%%%%%%%%%%%%%%%%%%%%%%%%%%%%%%%%%%%%%
\begin{bibdiv}
\begin{biblist}[\resetbiblist{9999999}]*{labels={alphabetic}}

% \bibselect{MyAMSRefs}

\bib{AcAu07}{article}{
  author={Achar, Pramod N.},
  author={Aubert, Anne-Marie},
  title={Supports unipotents de faisceaux caract\`eres},
  language={French, with English and French summaries},
  journal={J. Inst. Math. Jussieu},
  volume={6},
  date={2007},
  number={2},
  pages={173--207},
  issn={1474-7480},
  review={\MR {2311663}},
  doi={10.1017/S1474748006000065},
}

\bib{AM22}{article}{
  author={Arote, Prashant},
  author={Mishra, Manish},
  title={Harish-Chandra Induction and Jordan Decomposition of Characters},
  status={preprint, available at https://arxiv.org/abs/2209.00574},
}

\bib{Aub95}{article}{
  author={Aubert, Anne-Marie},
  title={Dualit\'e{} dans le groupe de Grothendieck de la cat\'egorie des repr\'esentations lisses de longueur finie d'un groupe r\'eductif $p$-adique},
  language={French, with English summary},
  journal={Trans. Amer. Math. Soc.},
  volume={347},
  date={1995},
  number={6},
  pages={2179--2189},
  issn={0002-9947},
  review={\MR {1285969}},
  doi={10.2307/2154931},
}

\bib{ABPS17}{article}{
  author={Aubert, Anne-Marie},
  author={Baum, Paul},
  author={Plymen, Roger},
  author={Solleveld, Maarten},
  title={The principal series of $p$-adic groups with disconnected center},
  journal={Proc. Lond. Math. Soc. (3)},
  volume={114},
  date={2017},
  number={5},
  pages={798--854},
  issn={0024-6115},
  review={\MR {3653247}},
  doi={10.1112/plms.12023},
}

\bib{BMW25}{article}{
  author={Bai, Zhanqiang},
  author={Ma, Jia-Jun},
  author={Wang, Yutong},
  title={On the annihilator variety of a highest weight module for classical Lie algebras},
  journal={J. Lond. Math. Soc. (2)},
  volume={112},
  date={2025},
  number={2},
  pages={Paper No. e70256},
  issn={0024-6107},
  review={\MR {4941342}},
  doi={10.1112/jlms.70256},
}

\bib{BouL2}{book}{
  author={Bourbaki, Nicolas},
  title={Lie groups and Lie algebras. Chapters 4--6},
  series={Elements of Mathematics (Berlin)},
  note={Translated from the 1968 French original by Andrew Pressley},
  publisher={Springer-Verlag, Berlin},
  date={2002},
  pages={xii+300},
  isbn={3-540-42650-7},
  review={\MR {1890629}},
  doi={10.1007/978-3-540-89394-3},
}

\bib{BD01}{article}{
  author={Brylinski, Jean-Luc},
  author={Deligne, Pierre},
  title={Central extensions of reductive groups by $\bold K_2$},
  journal={Publ. Math. Inst. Hautes \'Etudes Sci.},
  number={94},
  date={2001},
  pages={5--85},
  issn={0073-8301},
  review={\MR {1896177}},
  doi={10.1007/s10240-001-8192-2},
}

\bib{BFrG2}{article}{
  author={Bump, Daniel},
  author={Friedberg, Solomon},
  author={Ginzburg, David},
  title={Small representations for odd orthogonal groups},
  journal={Int. Math. Res. Not.},
  date={2003},
  number={25},
  pages={1363--1393},
  issn={1073-7928},
  review={\MR {1968295}},
  doi={10.1155/S1073792803210217},
}

\bib{BFrG}{article}{
  author={Bump, Daniel},
  author={Friedberg, Solomon},
  author={Ginzburg, David},
  title={Lifting automorphic representations on the double covers of orthogonal groups},
  journal={Duke Math. J.},
  volume={131},
  date={2006},
  number={2},
  pages={363--396},
  issn={0012-7094},
  review={\MR {2219245}},
}

\bib{Car}{book}{
  author={Carter, Roger W.},
  title={Finite groups of Lie type},
  series={Wiley Classics Library},
  note={Conjugacy classes and complex characters; Reprint of the 1985 original; A Wiley-Interscience Publication},
  publisher={John Wiley \& Sons, Ltd., Chichester},
  date={1993},
  pages={xii+544},
  isbn={0-471-94109-3},
  review={\MR {1266626}},
}

\bib{CK24}{article}{
  author={Ciubotaru, Dan},
  author={Kim, Ju-Lee},
  title={The wavefront set: bounds for the Langlands parameter},
  status={Math. Annalen (2025), available at https://arxiv.org/abs/2403.14261v2},
  doi={10.1007/s00208-025-03278-4},
}

\bib{CMBO24}{article}{
  author={Ciubotaru, Dan},
  author={Mason-Brown, Lucas},
  author={Okada, Emile},
  title={The wavefront sets of unipotent supercuspidal representations},
  journal={Algebra Number Theory},
  volume={18},
  date={2024},
  number={10},
  pages={1863--1889},
  issn={1937-0652},
  review={\MR {4810074}},
  doi={10.2140/ant.2024.18.1863},
}

\bib{CMBO25}{article}{
  author={Ciubotaru, Dan},
  author={Mason-Brown, Lucas},
  author={Okada, Emile},
  title={Wavefront sets of unipotent representations of reductive $p$-adic groups II},
  journal={J. Reine Angew. Math.},
  volume={823},
  date={2025},
  pages={191--253},
  issn={0075-4102},
  review={\MR {4912263}},
  doi={10.1515/crelle-2025-0022},
}

\bib{CMBO21}{article}{
  author={Ciubotaru, Dan},
  author={Mason-Brown, Lucas},
  author={Okada, Emile},
  title={Wavefront sets of unipotent representations of reductive p-adic groups I},
  status={to appear in Amer. J. Math., available at https://arxiv.org/abs/2112.14354v5},
}

\bib{CM93}{book}{
  author={Collingwood, David H.},
  author={McGovern, William M.},
  title={Nilpotent orbits in semisimple Lie algebras},
  series={Van Nostrand Reinhold Mathematics Series},
  publisher={Van Nostrand Reinhold Co., New York},
  date={1993},
  pages={xiv+186},
  isbn={0-534-18834-6},
  review={\MR {1251060}},
}

\bib{FG15}{article}{
  author={Friedberg, Solomon},
  author={Ginzburg, David},
  title={Metaplectic theta functions and global integrals},
  journal={J. Number Theory},
  volume={146},
  date={2015},
  pages={134--149},
  issn={0022-314X},
  review={\MR {3267113}},
  doi={10.1016/j.jnt.2014.04.001},
}

\bib{FG18}{article}{
  author={Friedberg, Solomon},
  author={Ginzburg, David},
  title={Descent and theta functions for metaplectic groups},
  journal={J. Eur. Math. Soc. (JEMS)},
  volume={20},
  date={2018},
  number={8},
  pages={1913--1957},
  issn={1435-9855},
  review={\MR {3854895}},
  doi={10.4171/JEMS/803},
}

\bib{FG20}{article}{
  author={Friedberg, Solomon},
  author={Ginzburg, David},
  title={Classical theta lifts for higher metaplectic covering groups},
  journal={Geom. Funct. Anal.},
  volume={30},
  date={2020},
  number={6},
  pages={1531--1582},
  issn={1016-443X},
  review={\MR {4182832}},
  doi={10.1007/s00039-020-00548-y},
}

\bib{GG18}{article}{
  author={Gan, Wee Teck},
  author={Gao, Fan},
  title={The Langlands-Weissman program for Brylinski-Deligne extensions},
  language={English, with English and French summaries},
  note={L-groups and the Langlands program for covering groups},
  journal={Ast\'erisque},
  date={2018},
  number={398},
  pages={187--275},
  issn={0303-1179},
  isbn={978-2-85629-845-9},
  review={\MR {3802419}},
}

\bib{Ga6}{article}{
  author={Gao, Fan},
  title={Kazhdan--Lusztig representations and Whittaker space of some genuine representations},
  journal={Math. Ann.},
  volume={376},
  date={2020},
  number={1},
  pages={289--358},
  issn={0025-5831},
  review={\MR {1464132}},
  doi={10.1007/s00208-019-01925-1},
}

\bib{GGK1}{article}{
  author={Gao, Fan},
  author={Gurevich, Nadya},
  author={Karasiewicz, Edmund},
  title={Genuine pro-$p$ Iwahori--Hecke algebras, Gelfand--Graev representations, and some applications},
  status={J. Eur. Math. Soc., Published Online First 2024, DOI 10.4171/JEMS/1491, also available at https://arxiv.org/abs/2204.13053},
}

\bib{GLLS}{article}{
  author={Gao, Fan},
  author={Liu, Baiying},
  author={Lo, Chi-Heng},
  author={Shahidi, Freydoon},
  title={Covering Barbasch--Vogan duality and wavefront sets of genuine representations},
  status={preprint, available at https://arxiv.org/abs/2511.14750},
}

\bib{GLT25}{article}{
  author={Gao, Fan},
  author={Liu, Baiying},
  author={Tsai, Wan-Yu},
  title={Quasi-admissible, raisable nilpotent orbits, and theta representations},
  journal={Sci. China Math.},
  volume={68},
  date={2025},
  number={9},
  pages={2031--2070},
  issn={1674-7283},
  review={\MR {4948185}},
  doi={10.1007/s11425-024-2359-6},
}

\bib{GaTs}{article}{
  author={Gao, Fan},
  author={Tsai, Wan-Yu},
  title={On the wavefront sets associated with theta representations},
  journal={Math. Z.},
  volume={301},
  date={2022},
  number={1},
  pages={1--40},
  issn={0025-5874},
  review={\MR {4405642}},
  doi={10.1007/s00209-021-02894-5},
}

\bib{GGS21}{article}{
  author={Gomez, Raul},
  author={Gourevitch, Dmitry},
  author={Sahi, Siddhartha},
  title={Whittaker supports for representations of reductive groups},
  language={English, with English and French summaries},
  journal={Ann. Inst. Fourier (Grenoble)},
  volume={71},
  date={2021},
  number={1},
  pages={239--286},
  issn={0373-0956},
  review={\MR {4275869}},
}

\bib{GNS99}{article}{
  author={Gyoja, Akihiko},
  author={Nishiyama, Kyo},
  author={Shimura, Hiroyuki},
  title={Invariants for representations of Weyl groups and two-sided cells},
  journal={J. Math. Soc. Japan},
  volume={51},
  date={1999},
  number={1},
  pages={1--34},
  issn={0025-5645},
  review={\MR {1661012}},
  doi={10.2969/jmsj/05110001},
}

\bib{HC99}{book}{
  author={Harish-Chandra},
  title={Admissible invariant distributions on reductive $p$-adic groups},
  series={University Lecture Series},
  volume={16},
  note={With a preface and notes by Stephen DeBacker and Paul J. Sally, Jr.},
  publisher={American Mathematical Society, Providence, RI},
  date={1999},
  pages={xiv+97},
  isbn={0-8218-2025-7},
  review={\MR {1702257}},
  doi={10.1090/ulect/016},
}

\bib{HLLS}{article}{
  author={Hazeltine, Alexander},
  author={Liu, Baiying},
  author={Lo, Chi-Heng},
  author={Shahidi, Freydoon},
  title={On the upper bound of wavefront sets of representations of $p$-adic groups},
  status={preprint, available at https://arxiv.org/abs/2403.11976v2},
}

\bib{HW09}{article}{
  author={Howard, Tatiana K.},
  author={Weissman, Martin H.},
  title={Depth-zero representations of nonlinear covers of $p$-adic groups},
  journal={Int. Math. Res. Not. IMRN},
  date={2009},
  number={21},
  pages={3979--3995},
  issn={1073-7928},
  review={\MR {2549947}},
  doi={10.1093/imrn/rnp076},
}

\bib{How1}{article}{
  author={Howe, Roger},
  title={The Fourier transform and germs of characters (case of ${\rm Gl}_{n}$ over a $p$-adic field)},
  journal={Math. Ann.},
  volume={208},
  date={1974},
  pages={305--322},
  issn={0025-5831},
  review={\MR {342645}},
  doi={10.1007/BF01432155},
}

\bib{JL16}{article}{
  author={Jiang, Dihua},
  author={Liu, Baiying},
  title={Fourier coefficients for automorphic forms on quasisplit classical groups},
  conference={ title={Advances in the theory of automorphic forms and their $L$-functions}, },
  book={ series={Contemp. Math.}, volume={664}, publisher={Amer. Math. Soc., Providence, RI}, },
  date={2016},
  pages={187--208},
  review={\MR {3502983}},
  doi={10.1090/conm/664/13062},
}

\bib{JL25}{article}{
  author={Jiang, Dihua},
  author={Liu, Baiying},
  title={On wavefront sets of global Arthur packets of classical groups: upper bound},
  journal={J. Eur. Math. Soc. (JEMS)},
  volume={27},
  date={2025},
  number={9},
  pages={3841--3888},
  issn={1435-9855},
  review={\MR {4939527}},
  doi={10.4171/jems/1446},
}

\bib{Kap17-1}{article}{
  author={Kaplan, Eyal},
  title={The double cover of odd general spin groups, small representations, and applications},
  journal={J. Inst. Math. Jussieu},
  volume={16},
  date={2017},
  number={3},
  pages={609--671},
  issn={1474-7480},
  review={\MR {3646283}},
  doi={10.1017/S1474748015000250},
}

\bib{KOW}{article}{
  author={Karasiewicz, Edmund},
  author={Okada, Emile},
  author={Wang, Runze},
  title={The stable wave front set of theta representations},
  status={preprint, available at https://arxiv.org/abs/2411.02073},
}

\bib{KL2}{article}{
  author={Kazhdan, David},
  author={Lusztig, George},
  title={Proof of the Deligne-Langlands conjecture for Hecke algebras},
  journal={Invent. Math.},
  volume={87},
  date={1987},
  number={1},
  pages={153--215},
  issn={0020-9910},
  review={\MR {862716}},
}

\bib{Li3}{article}{
  author={Li, Wen-Wei},
  title={La formule des traces pour les rev\^etements de groupes r\'eductifs connexes. II. Analyse harmonique locale},
  language={French, with English and French summaries},
  journal={Ann. Sci. \'Ec. Norm. Sup\'er. (4)},
  volume={45},
  date={2012},
  number={5},
  pages={787--859},
  issn={0012-9593},
  review={\MR {3053009}},
  doi={10.24033/asens.2178},
}

\bib{Lus84-B}{book}{
  author={Lusztig, George},
  title={Characters of reductive groups over a finite field},
  series={Annals of Mathematics Studies},
  volume={107},
  publisher={Princeton University Press, Princeton, NJ},
  date={1984},
  pages={xxi+384},
  isbn={0-691-08350-9},
  isbn={0-691-08351-7},
  review={\MR {742472}},
}

\bib{Lus88}{article}{
  author={Lusztig, G.},
  title={On the representations of reductive groups with disconnected centre},
  note={Orbites unipotentes et repr\'esentations, I},
  journal={Ast\'erisque},
  number={168},
  date={1988},
  pages={10, 157--166},
  issn={0303-1179},
  review={\MR {1021495}},
}

\bib{MW87}{article}{
  author={M\oe glin, C.},
  author={Waldspurger, J.-L.},
  title={Mod\`eles de Whittaker d\'eg\'en\'er\'es pour des groupes $p$-adiques},
  language={French},
  journal={Math. Z.},
  volume={196},
  date={1987},
  number={3},
  pages={427--452},
  issn={0025-5874},
  review={\MR {913667}},
}

\bib{Oka21}{article}{
  author={Okada, Emile T.},
  title={The wavefront set over a maximal unramified field extension},
  status={preprint, available at https://arxiv.org/abs/2107.10591},
}

\bib{Pate}{article}{
  author={Prakash Patel, Shiv},
  title={A theorem of M\oe glin and Waldspurger for covering groups},
  journal={Pacific J. Math.},
  volume={273},
  date={2015},
  number={1},
  pages={225--239},
  issn={0030-8730},
  review={\MR {3290452}},
}

\bib{Ree4}{article}{
  author={Reeder, Mark},
  title={Isogenies of Hecke algebras and a Langlands correspondence for ramified principal series representations},
  journal={Represent. Theory},
  volume={6},
  date={2002},
  pages={101--126},
  review={\MR {1915088}},
  doi={10.1090/S1088-4165-02-00167-X},
}

\bib{Rod4}{article}{
  author={Rodier, Fran\c {c}ois},
  title={D\'ecomposition de la s\'erie principale des groupes r\'eductifs $p$-adiques},
  language={French},
  conference={ title={Noncommutative harmonic analysis and Lie groups}, address={Marseille}, date={1980}, },
  book={ series={Lecture Notes in Math.}, volume={880}, publisher={Springer, Berlin-New York}, },
  date={1981},
  pages={408--424},
  review={\MR {644842}},
}

\bib{Sag01}{book}{
  author={Sagan, Bruce E.},
  title={The symmetric group},
  series={Graduate Texts in Mathematics},
  volume={203},
  edition={2},
  note={Representations, combinatorial algorithms, and symmetric functions},
  publisher={Springer-Verlag, New York},
  date={2001},
  pages={xvi+238},
  isbn={0-387-95067-2},
  review={\MR {1824028}},
  doi={10.1007/978-1-4757-6804-6},
}

\bib{Sav04}{article}{
  author={Savin, Gordan},
  title={On unramified representations of covering groups},
  journal={J. Reine Angew. Math.},
  volume={566},
  date={2004},
  pages={111--134},
  issn={0075-4102},
  review={\MR {2039325}},
}

\bib{Sho88}{article}{
  author={Shoji, Toshiaki},
  title={Geometry of orbits and Springer correspondence},
  note={Orbites unipotentes et repr\'{e}sentations, I},
  journal={Ast\'{e}risque},
  number={168},
  date={1988},
  pages={9, 61--140},
  issn={0303-1179},
  review={\MR {1021493}},
}

\bib{Som97}{article}{
  author={Sommers, Eric},
  title={A family of affine Weyl group representations},
  journal={Transform. Groups},
  volume={2},
  date={1997},
  number={4},
  pages={375--390},
  issn={1083-4362},
  review={\MR {1486037}},
  doi={10.1007/BF01234541},
}

\bib{Som98}{article}{
  author={Sommers, Eric},
  title={A generalization of the Bala-Carter theorem for nilpotent orbits},
  journal={Internat. Math. Res. Notices},
  date={1998},
  number={11},
  pages={539--562},
  issn={1073-7928},
  review={\MR {1631769}},
  doi={10.1155/S107379289800035X},
}

\bib{Som01}{article}{
  author={Sommers, Eric},
  title={Lusztig's canonical quotient and generalized duality},
  journal={J. Algebra},
  volume={243},
  date={2001},
  number={2},
  pages={790--812},
  issn={0021-8693},
  review={\MR {1850659}},
  doi={10.1006/jabr.2001.8868},
}

\bib{Spr78}{article}{
  author={Springer, T. A.},
  title={A construction of representations of Weyl groups},
  journal={Invent. Math.},
  volume={44},
  date={1978},
  number={3},
  pages={279--293},
  issn={0020-9910},
  review={\MR {0491988}},
}

\bib{Tay16}{article}{
  author={Taylor, Jay},
  title={Generalized Gelfand-Graev representations in small characteristics},
  journal={Nagoya Math. J.},
  volume={224},
  date={2016},
  number={1},
  pages={93--167},
  issn={0027-7630},
  review={\MR {3572751}},
  doi={10.1017/nmj.2016.33},
}

\bib{Var14}{article}{
  author={Varma, Sandeep},
  title={On a result of Moeglin and Waldspurger in residual characteristic 2},
  journal={Math. Z.},
  volume={277},
  date={2014},
  number={3-4},
  pages={1027--1048},
  issn={0025-5874},
  review={\MR {3229979}},
  doi={10.1007/s00209-014-1292-8},
}

\bib{Wang24}{article}{
  author={Wang, Runze},
  title={Hecke algebras for tame genuine principal series and local Shimura correspondence},
  status={to appear in Trans. AMS, DOI: https://doi.org/10.1090/tran/9505, available at https://arxiv.org/abs/2405.06254},
}

\bib{We2}{article}{
  author={Weissman, Martin H.},
  title={Managing metaplectiphobia: covering $p$-adic groups},
  conference={ title={Harmonic analysis on reductive, $p$-adic groups}, },
  book={ series={Contemp. Math.}, volume={543}, publisher={Amer. Math. Soc., Providence, RI}, },
  date={2011},
  pages={237--277},
  review={\MR {2798431}},
  doi={10.1090/conm/543/10738},
}

\bib{Wei18a}{article}{
  author={Weissman, Martin H.},
  title={L-groups and parameters for covering groups},
  language={English, with English and French summaries},
  note={L-groups and the Langlands program for covering groups},
  journal={Ast\'erisque},
  date={2018},
  number={398},
  pages={33--186},
  issn={0303-1179},
  isbn={978-2-85629-845-9},
  review={\MR {3802418}},
}

\bib{Zel80}{article}{
  author={Zelevinsky, A. V.},
  title={Induced representations of reductive ${\germ p}$-adic groups. II. On irreducible representations of ${\rm GL}(n)$},
  journal={Ann. Sci. \'{E}cole Norm. Sup. (4)},
  volume={13},
  date={1980},
  number={2},
  pages={165--210},
  issn={0012-9593},
  review={\MR {584084}},
}

\end{biblist}
\end{bibdiv}

\end{document}